\PassOptionsToPackage{unicode}{hyperref}
\PassOptionsToPackage{hyphens}{url}
\PassOptionsToPackage{dvipsnames,svgnames,x11names}{xcolor}
\documentclass[12pt]{article}

\usepackage{amsmath,amssymb}
\usepackage{bm}
\usepackage{amsthm}

\newtheorem{assumption}{Assumption}
\newtheorem{theorem}{Theorem}
\newtheorem{remark}{Remark}
\newtheorem{proposition}{Proposition}

\newtheorem{lemma}{Lemma}

\usepackage{iftex}
\ifPDFTeX
\usepackage[T1]{fontenc}
\usepackage[utf8]{inputenc}
\usepackage{textcomp}
\else
\usepackage{unicode-math}
\defaultfontfeatures{Scale=MatchLowercase}
\defaultfontfeatures[\rmfamily]{Ligatures=TeX,Scale=1}
\let\bm\symbf
\let\boldsymbol\symbf
\fi
\usepackage{lmodern}

\IfFileExists{upquote.sty}{\usepackage{upquote}}{}
\IfFileExists{microtype.sty}{%
	\usepackage{microtype}
	\UseMicrotypeSet[protrusion]{basicmath}
}{}

\makeatletter
\@ifundefined{KOMAClassName}{%
	\IfFileExists{parskip.sty}{%
		\usepackage{parskip}
	}{%
		\setlength{\parindent}{0pt}
		\setlength{\parskip}{6pt plus 2pt minus 1pt}
	}
}{%
	\KOMAoptions{parskip=half}
}
\makeatother

\usepackage{xcolor}
\makeatletter
\ifx\paragraph\undefined\else
\let\oldparagraph\paragraph
\renewcommand{\paragraph}{
	\@ifstar
	\xxxParagraphStar
	\xxxParagraphNoStar
}
\newcommand{\xxxParagraphStar}[1]{\oldparagraph*{#1}\mbox{}}
\newcommand{\xxxParagraphNoStar}[1]{\oldparagraph{#1}\mbox{}}
\fi
\ifx\subparagraph\undefined\else
\let\oldsubparagraph\subparagraph
\renewcommand{\subparagraph}{
	\@ifstar
	\xxxSubParagraphStar
	\xxxSubParagraphNoStar
}
\newcommand{\xxxSubParagraphStar}[1]{\oldsubparagraph*{#1}\mbox{}}
\newcommand{\xxxSubParagraphNoStar}[1]{\oldsubparagraph{#1}\mbox{}}
\fi
\makeatother

\usepackage{longtable,booktabs,array}
\usepackage{calc}
\usepackage{etoolbox}
\makeatletter
\patchcmd\longtable{\par}{\if@noskipsec\mbox{}\fi\par}{}{}
\makeatother

\IfFileExists{footnotehyper.sty}{\usepackage{footnotehyper}}{\usepackage{footnote}}
\makesavenoteenv{longtable}

\usepackage{graphicx}
\makeatletter
\def\maxwidth{\ifdim\Gin@nat@width>\linewidth\linewidth\else\Gin@nat@width\fi}
\def\maxheight{\ifdim\Gin@nat@height>\textheight\textheight\else\Gin@nat@height\fi}
\makeatother
\setkeys{Gin}{width=\maxwidth,height=\maxheight,keepaspectratio}
\def\fps@figure{htbp}

\makeatletter
\@ifpackageloaded{caption}{}{\usepackage{caption}}
\AtBeginDocument{%
	\ifdefined\contentsname
	\renewcommand*\contentsname{Table of contents}
	\else
	\newcommand\contentsname{Table of contents}
	\fi
	\ifdefined\listfigurename
	\renewcommand*\listfigurename{List of Figures}
	\else
	\newcommand\listfigurename{List of Figures}
	\fi
	\ifdefined\listtablename
	\renewcommand*\listtablename{List of Tables}
	\else
	\newcommand\listtablename{List of Tables}
	\fi
	\ifdefined\figurename
	\renewcommand*\figurename{Figure}
	\else
	\newcommand\figurename{Figure}
	\fi
	\ifdefined\tablename
	\renewcommand*\tablename{Table}
	\else
	\newcommand\tablename{Table}
	\fi
}
\@ifpackageloaded{float}{}{\usepackage{float}}
\floatstyle{ruled}
\@ifundefined{c@chapter}{\newfloat{codelisting}{h}{lop}}{\newfloat{codelisting}{h}{lop}[chapter]}
\floatname{codelisting}{Listing}

\makeatother

\makeatletter
\@ifpackageloaded{subcaption}{}{\usepackage{subcaption}}
\makeatother

\ifLuaTeX
\usepackage{selnolig}
\fi

\usepackage{natbib}
\usepackage{bookmark}
\IfFileExists{xurl.sty}{\usepackage{xurl}}{}
\hypersetup{
	pdftitle={Support Recovery and $\ell_2$-Error Bound for Sparse Regression with Quadratic Measurements via Weakly-Convex-Concave Regularization},
	pdfauthor={Author 1; Author 2},
	pdfkeywords={Nonconvex statistics, Finite sample error bound, Consistency, Optimization algorithm},
	colorlinks=true,
	linkcolor={blue},
	filecolor={Maroon},
	citecolor={Blue},
	urlcolor={Blue},
	pdfcreator={LaTeX}
}

\usepackage{algorithm}
\usepackage{algcompatible}
\usepackage{makecell}
\usepackage{threeparttable}
\usepackage{siunitx}
\usepackage{multirow}
\usepackage{epstopdf}
\usepackage{rotating}
\usepackage{epsfig}  
\usepackage{fancyhdr}  
\graphicspath{{./Numerical-Experiments/random/}}   

\usepackage{hyperref}

\newcommand{\anon}{1}  

\begin{document}
	
	\def\spacingset#1{\renewcommand{\baselinestretch}{#1}\small\normalsize}
	\spacingset{1}
	
	
	\if1\anon
	{
		\title{\bf Support Recovery and $\ell_2$-Error Bound for Sparse Regression with Quadratic Measurements via Weakly-Convex-Concave Regularization}
		\author{Jun Fan, Jingyu Yang, 	Xinyu Zhang\hspace{.2cm}\\
			School of Science, Hebei University of Technolog\\
			and \\
			Liqun Wang\thanks{Corresponding author: Liqun.Wang@umanitoba.ca.
				\textit{This work was supported by the National Natural Science Foundation
					of China under Grants 12571345 and 12271022; and the Natural Sciences and Engineering Research Council of Canada under Grant 4924-2023.}}\\
			Department of Statistics, University of Manitoba}
		\maketitle
	} \fi
	
	\if0\anon
	{
		\bigskip
		\bigskip
		\bigskip
		\begin{center}
			{\LARGE\bf Title}
		\end{center}
		\medskip
	} \fi
	
	\bigskip
	\begin{abstract}
		The recovery of unknown signals from quadratic measurements finds extensive applications in fields such as phase retrieval, power system state estimation, and unlabeled distance geometry. This paper investigates the finite sample properties of weakly convex--concave regularized estimators in high-dimensional quadratic measurements models. By employing a weakly convex--concave penalized least squares approach, we establish support recovery and $\ell_2$-error bounds for the local minimizer. To solve the corresponding optimization problem, we adopt two proximal gradient strategies, where the proximal step is computed either in closed form or via a weighted $\ell_1$ approximation, depending on the regularization function. Numerical examples demonstrate the efficacy of the proposed method.
	\end{abstract}
	
	\noindent%
	{\it Keywords:} Nonconvex statistics, Finite sample error bound,  Consistency, Optimization algorithm.
	\vfill
	
	\newpage
	\spacingset{1.8} 

	\section{Introduction}\label{sec-intro}
	
Quadratic measurement regression model arises in numerous applications in physics, engineering, and data science, including phase retrieval \citep{candes2015}, generalized phase retrieval \citep{Wang2019pr}, the unassigned distance geometry problem \citep{Huang}, and power system state estimation \citep{Wangrobust}. The essence is to recover the signals from noisy quadratic measurements data. Recent years have witnessed a surge of interests and methodological developments in this area. Notable advances include \citet{thaker2020}, \citet{huang2020solving}, \citet{chen2022error}, and \citet{Fan2025}, who proposed algorithms and theoretical guarantees of optimal solutions under various problem settings. 

A particular exciting development is in the high-dimensional regime, where the dimension of the signal $d$ is large or even exceeds the number of sample points $n$. In this case, the sparsity assumption is crucial for ensuring identifiability and statistical efficiency. Several studies have explored this challenging direction, e.g., \citet{bolte2018} studied the quadratic inverse problem and proposed a Bregman proximal gradient algorithm (PGA), while \citet{zhang2023} and \citet{Ding2025} developed Bregman PGA for more general regularized problems. A special case of sparse quadratic model is sparse phase retrieval that attracted considerable  attention \citep{2019structured, xu2021sparse, cai2022sparse, pr}. More recently, \citet{chen2025} employed the thresholded Wirtinger flow algorithm of \cite{tonycai2016} for sparse phase retrieval.


However, most of the existing literature mentioned above focused primarily on numerical optimization and algorithmic development for the regularized least squares formulation of quadratic inverse problem, and the statistical guarantees remain relatively under-explored. On the other hand, in many applications the real data are noisy and in many cases, the noise level is significantly high relative to the signals. Therefore, investigation of statistical properties and stability of the methods and numerical procedures is desirable and important. A few researchers have attempted in this direction. Notably, \citet{tonycai2016} developed a thresholded Wirtinger flow algorithm for noisy sparse phase retrieval that achieves a minimax optimal rate under sub-exponential noise satisfying a near-optimal sample size condition; while \citet{Fan2018} established a weak oracle property and proposed a fixed-point iterative algorithm for the $\ell_q(0<q<1)$-regularized least squares method.

In this paper, we consider the following general quadratic measurement model
	\begin{equation}\label{qm}
		y_i = \boldsymbol{\beta}^{\top} \boldsymbol{Z}_i \boldsymbol{\beta} + \varepsilon_i, \quad i = 1, \ldots, n,
	\end{equation}
	where $y_i \in \mathbb{R}$ denotes the observed response, $\boldsymbol{Z}_i \in \mathbb{R}^{d \times d}$ is a symmetric design matrix, $\boldsymbol{\beta} \in \mathbb{R}^d$ is the vector of unknown parameters representing true signals, and $\varepsilon_i \in \mathbb{R}$ is the random noise.	
Specifically, we study the regularization problem
	\begin{equation}\label{prob}
		\min_{\bm{\beta} \in \mathbb{R}^d} ~ \frac{1}{4n} \sum_{i=1}^n \left( \bm{\beta}^{\top} \bm{Z}_i \bm{\beta} - y_i \right)^2 + P_{\lambda_n}(\bm{\beta}),
	\end{equation}
	where $\lambda_n > 0$ is a tuning parameter that encourages sparsity, and $P_{\lambda_n}(\cdot)$ is a penalty function. Model (\ref{qm}) is called generalized phase retrieval \citep{Wang2019pr, chen2022error}, whereas the phase retrieval is a special case where $\boldsymbol{Z}_i$ has rank one. This later case is also known as an index model in econometrics and statistics.


Various regularization techniques have been developed in the context of linear regression models, such as the LASSO, $\ell_q(0<q<1)$ penalty \citep{lq}, SCAD \citep{fan2001variable}, and MCP \citep{zhang2010nearly}.
It is well-known that LASSO has constant derivative that induces persistent shrinkage and biased estimation \citep{zou2006adaptive}. In contrast, concave penalties (e.g., SCAD, MCP, and $\ell_q(0<q<1)$) have derivatives that vanish for large coefficients, thereby mitigating shrinkage bias and yielding nearly unbiased estimates. However, nonconvex penalties introduces computational challenges. In particular, highly nonconvex penalties may cause numerous local minima, and the convergence of first-order algorithm (e.g., PGA) relies on favorable properties of the objective function. One such property is weak convexity, which is a key condition that generally ensures the proximal operator is single-valued under standard technical assumptions \citep{wang2010chebyshev,khanh2025inexact}. If this condition is violated, the proximal operator may be multi-valued, which poses a potential risk that iterative algorithms could fail to converge to a meaningful solution.
	
	This challenge motivates the study of regularizers that balance statistical and computational considerations. \citet{Loh2015} developed a general framework for a class of weakly convex regularized M-estimators and established finite sample error bounds between any stationary point of the penalized estimator and the population-optimal solution without regularization (i.e., the solution minimizing the expected population risk). While their framework applies to a wide range of models, including linear and generalized linear models and graphical LASSO, it cannot be directly applied to sparse quadratic measurement problems. The main difficulty is that the least squares loss for this model leads to a highly nonconvex quartic polynomial that are computationally very challenging to minimize. In fact, as pointed out by \citet{candes2015}, it is already known that determining whether a stationary point of a quartic polynomial is a local minimizer is NP-hard. Moreover, \citet{Loh2015} introduced an extra tuning parameter that must be chosen carefully to ensure that the true value $\bm{\beta}^*$ is a feasible point, thereby further increasing the difficulty of solving the problem. Weakly convex regularization methods have also been widely used in sparse signal recovery \citep{2019weakly,K2022} and linear inverse problems \citep{Sh2024,Gou2024,ebner2025}, but these works do not explicitly exploit concave penalties. 
	
	To overcome the above mentioned numerical issues, in this paper, we propose a class of concave penalties that also satisfy weak convexity, called Weakly-Convex-Concave Penalties (WCCP). The major advantage of this class is that it provides a flexible framework for analyzing local minimizers in sparse quadratic measurement problems, combining the statistical benefits of concavity with the algorithmic guarantees of weak convexity. 

	Our contributions are threefold. First, we bridge a notable gap in the literature by providing the first systematic statistical analysis of the WCCP-regularized estimator for the quadratic measurement model \eqref{qm}. We establish rigorous support recovery guarantees and $\ell_2$-error bounds for its local minimizer, and derive the statistical properties of the proposed estimator. In particular, by choosing the appropriate regularization parameter, the $\ell_2$-error bounds achieve the rate of $O(\sqrt{\ln d /n})$, which is the known optimal rate for sparse phase retrieval as shown in \citet{tonycai2016}. Second, we develop iterative algorithms tailored to different regularization functions that provide convergence guarantees. Third, we demonstrate the effectiveness of our method, particularly in the context of sparse phase retrieval, and show that it performs comparable or better than the existing LASSO-based approaches.
	
	The rest of this paper is organized as follows. Section 2 presents the assumptions and statistical properties of the estimator. Section 3 introduces two iterative algorithms and provides a convergence analysis. Numerical experiments are presented in Section 4. Technical lemmas and proofs are deferred to Appendices A and B.
	
	\subsection*{Notation}
	For a vector $\bm{v} = (v_1, \ldots, v_d)^{\top}$, let $\|\bm{v}\|$, $\|\bm{v}\|_1$, and $\|\bm{v}\|_{\infty}$ denote its Euclidean, $\ell_1$, and $\ell_{\infty}$ norms, respectively.  Denote  by $\bm{e}_{d,j}$ the $j$th column of the $d\times{}d$ identity matrix $\bm{I}_d$.	 For any $n\times{}d$ matrix $\bm{Z}$, denote $|\bm{Z}|_\infty=\max_{1\leq i\leq n,1\leq j\leq{}d}|\bm{e}_{n,i}^\top\bm{Z}\bm{e}_{d,j}|$. The submatrix of $\bm{Z}$ with rows and columns indexed by sets $\Gamma_1$ and $\Gamma_2$ is denoted $\bm{Z}^{\Gamma_1, \Gamma_2}$. The subvector of $\bm{\beta}$ indexed by $\Gamma$ is denoted $\bm{\beta}_{\Gamma}$. The $j$th standard basis vector in $\mathbb{R}^d$ is denoted $\bm{e}_{d,j}$. For a function $h(t)$, we denote its gradient and Hessian by $\nabla h(t)$ and $\nabla^2 h(t)$, respectively, and its subdifferential by $\partial h(t)$.
	Let $\bm\beta^\star$ be the true parameter value, and denote $\Gamma^\star=\mathrm{supp}(\bm\beta^\star):=\{j:{\bm{e}_{d,j}^T}\bm\beta^\star\neq 0,j=1,...,d\}$. Define the oracle regularized least square estimator as follows,
	\begin{equation*}
		\hat{\bm\beta}_1^o \in \arg\min_{\bm\beta_1\in \mathbb{R}^s}\frac{1}{4n}\sum\limits_{i=1}^n({{\bm\beta_1}^\top \bm Z_i^{\Gamma^\star\Gamma^\star}{\bm\beta_1}-y_i})^2+\sum_{j\in\Gamma^\star}p_{\lambda_n}(|\beta_j|).
	\end{equation*}

	\section{Finite Sample Statistical Results}
	In this section, we derive the finite sample properties and consistency of the weakly convex-concave estimator in the high-dimensional case where $d$ is large than $n$. We assume that $\ln d=o(n^\alpha)$ with some constant $\alpha \in (0,1)$ and denote the number of non-zero elements of the true signal by $s$.		
	Following the literature \citep{Huang2008,Fan2018}, we assume that there exists constants $0<c_1\leq c_2<\infty$ such that
	\begin{equation}\label{(3)}
		c_1 \leq\min\{|\bm{e}_{d,j}^\top\bm\beta^\star|,j\in \Gamma^\star\}\leq\max\{|\bm{e}_{d,j}^\top\bm\beta^\star|,j\in \Gamma^\star\}\leq c_2.
	\end{equation}
	Further assume that the observed response and design matrix are standardized so that
	\begin{equation}\label{Z-std}
		\sum\limits_{i=1}^ny_i=0,~~~\mbox{and}~~~\sum\limits_{i=1}^n |\bm Z_i|_{\infty}^2=n.
	\end{equation}

	For the regularization function, noise and design matrix, we make the following assumptions.		
	\begin{assumption}\label{assumption1}
		The regularization function is coordinate separable
		\begin{equation*}
			P_{\lambda_n}(\bm\beta)=\sum_{j=1}^{d}p_{\lambda_n}(|\bm\beta_j|),
		\end{equation*}
		for some scalar function $p_{\lambda _n}$ which satisfies:
		\begin{itemize}
			\item[(i)] $p_{\lambda_n}$ is concave on $(0,\infty)$ and there exists $\mu >0$ such that $p_{\lambda_n}(t)+\frac{\mu }{2}t^2$ is convex.
			\item[(ii)] $p_{\lambda_n}$ satisfies $p_{\lambda _n}(0)=0$ and $p_{\lambda_n}(t)> 0$ for $ t>0$.
			\item[(iii)] For $t>0$, $p_{\lambda_n}$ is non-decreasing and $t\mapsto\frac{p_{\lambda_n}(t)}{t}$ is non-increasing in $t$.
			\item[(iv)] \( p_{\lambda_n} \) is differentiable with derivative \( p'_{\lambda_n}\) for all \( t > 0 \), and subdifferentiable at \( t = 0 \), satisfying \( \lim_{t \to 0^+} p'_{\lambda_n}(t) = \lambda_n \varrho \) with \( \varrho > 0 \). The function \( p'_{\lambda_n} \) is locally Lipschitz continuous in \( (0,\infty) \).
		\end{itemize}
	\end{assumption}

Compared to \cite{loh2017support,Loh2017}, here we requires the penalty function to be concave and its derivative $p_{\lambda_n}'$ to be locally Lipschitz continuous on the positive half-line. We explicitly emphasize the concavity of the penalty function because it is known to possess stronger variable selection capabilities. However, we impose weak convexity here to ensure the proximal mapping of the penalty function remains single-valued, which is crucial for achieving superior algorithmic convergence. Combining the second order continuous differentiability of the loss and the local Lipschitz's continuity of $p'_{\lambda_n}$ enables us to use the generalized Hessian to verify that a stationary point is a local minimizer. In fact, many concave penalties, including SCAD and MCP, possess locally Lipschitz continuous first derivatives on the positive half-line. In the following we present some examples of regularizers satisfying Assumption 1, including SCAD, and some other commonly used regularizers.
	
{\bf Example 1.} 
The Transformed $\ell_1$ (TL1) penalty \citep{lv2009unified} is defined as
\[
p_{\lambda_n}(t) = \lambda_n \cdot \frac{(a+1)t}{a + t}, \qquad t>0,
\]
where \(a > 0\) is a shape parameter. Assumption 1 holds with \(\varrho= \dfrac{a+1}{a}\) and \(\mu= 2\lambda_n \frac{a+1}{a^2}\).
%
%

{\bf Example 2.} LOG regularizer \citep{LOG}:
\[
p_{\lambda_n}(t)=\lambda_n \log (c|t|+1),
\]
where \(\lambda_n\) is the regularization parameter and \(c>0\) is a fixed parameter. Assumption 1 holds with \(\varrho=c\) and \(\mu=\lambda_n c^2\).

{\bf Example 3.} EXP regularizer \citep{EXP}:
\[
p_{\lambda_n}(t)=\lambda_n (1-e^{-c|t|}),
\]
where \(\lambda_n\) is the regularization parameter and \(c>0\) is a fixed parameter. Assumption 1 holds with \(\varrho=c\) and \(\mu=\lambda_n c^2\).

{\bf Example 4.}
The SCAD penalty is defined as
\[
p_{\lambda}(t)=
\begin{cases}
	\lambda |t|, & |t| \leq \lambda,\\[4pt]
	-\dfrac{t^{2}-2a\lambda|t|+\lambda^{2}}{2(a-1)}, & \lambda < |t| \leq a\lambda,\\[4pt]
	\dfrac{(a+1)\lambda^{2}}{2}, & |t| > a\lambda,
\end{cases}
\]
where \(\lambda>0\) is the regularization parameter and \(a>2\) is a fixed shape parameter. Assumption 1 holds with \(\varrho=1\) and \(\mu = 1/(a-1)\).

{\bf Example 5.}
The MCP penalty takes the form
\[
p_{\lambda}(t)=
\begin{cases}
	\lambda |t| - \dfrac{t^{2}}{2b}, & |t| \leq b\lambda,\\[6pt]
	\dfrac{b\lambda^{2}}{2}, & |t| > b\lambda,
\end{cases}
\]
where \(\lambda>0\) and \(b>0\) is a shape parameter controlling the concavity. Assumption 1 holds with \(\varrho=1\) and \(\mu = 1/b\).

The weak convexity parameters for SCAD and MCP shown in Examples 4 and 5 were previously established in \cite{loh2017support} and \cite{Loh2017}.

	\begin{assumption}\label{assumption2}
		The errors $\{\varepsilon_1,\cdots,\varepsilon_n\}$ are independent and identically distributed sub-Gaussian with variance proxy $\sigma^2$, and each $\varepsilon_i$ has zero mean and positive variance.
	\end{assumption}
	
	\begin{assumption}\label{assumption3}
		For any $\bm{x},\bm{y}\in\mathbb{R}^s$, there exist constants $c_4\geq c_3>0$ satisfying
		\begin{equation*}
			c_3{\|\bm{x}\|}^{2}{\|\bm{y}\|}^{2}
			\leq \frac{1}{n}\sum\limits_{i=1}^n(\bm{x}^\top \bm Z^{\Gamma^\star\Gamma^\star}_i \bm{y})^2
			\leqslant c_4{\|\bm{x}\|}^{2}{\|\bm{y}\|}^{2}.
		\end{equation*}
	\end{assumption}

	To establish the finite‑sample properties of our proposed estimator, we first analyze the oracle estimator $\hat{\bm\beta}_{1}^o$, which is defined with prior knowledge of the true support set. The following theorem provides its non‑asymptotic consistency guarantee.

\begin{theorem}\label{theorem1}
		Under model \eqref{qm} and Assumption 1-3, if $c_4\geq3/(ns)$, then
		\begin{equation*}
			\mathbb{P}\left(\|\hat{\bm\beta}_{1}^o-\bm\beta_1^\star\|\leq r_n\right)\geq 1-p_1-p_2,
		\end{equation*}
		where $p_1=2\exp\left\{-s\ln(1+2n)\right\}, p_2=\frac{1}{n^{-n/2}}$,
		$r_n=C_0\left(\sqrt{\ln(1+2n)/n}+\lambda_n\varrho/\sqrt{s}\right)$ and $C_0=\max\left\{10\sigma\sqrt{c_4}/c_1c_3,4/c_1^2c_3\right\}$.
	\end{theorem}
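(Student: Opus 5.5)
We compare objective values: since $\hat{\bm\beta}_1^o$ is a global minimizer of the oracle problem, its objective value is at most that at $\bm\beta_1^\star$. Write $\bm h=\hat{\bm\beta}_1^o-\bm\beta_1^\star$ and $\bm Z_i$ for $\bm Z_i^{\Gamma^\star\Gamma^\star}$. Then
\[
\hat{\bm\beta}^\top\bm Z_i\hat{\bm\beta}-\bm\beta^{\star\top}\bm Z_i\bm\beta^\star=\bm h^\top\bm Z_i(2\bm\beta^\star+\bm h),
\]
and the basic inequality becomes
\[
\frac{1}{4n}\sum_{i=1}^n\bigl(\bm h^\top\bm Z_i(2\bm\beta^\star+\bm h)\bigr)^2\le \frac{1}{2n}\sum_{i=1}^n\varepsilon_i\,\bm h^\top\bm Z_i(2\bm\beta^\star+\bm h)+\sum_{j\in\Gamma^\star}\bigl(p_{\lambda_n}(|\beta_j^\star|)-p_{\lambda_n}(|\hat\beta_j|)\bigr).
\]

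\textbf{Step 1: lower bound the left side.} Apply Assumption 3 with $\bm x=\bm h$ and $\bm y=2\bm\beta^\star+\bm h$. This gives the lower bound $\tfrac{c_3}{4}\|\bm h\|^2\|2\bm\beta^\star+\bm h\|^2$. The difficulty is that $\|2\bm\beta^\star+\bm h\|$ could be small when $\bm h\approx-2\bm\beta^\star$ (the sign ambiguity $\pm\bm\beta^\star$). I would handle this by measuring error modulo sign, i.e. replacing $\hat{\bm\beta}$ by $-\hat{\bm\beta}$ if needed, which leaves the objective invariant. With the sign chosen so that $\langle\hat{\bm\beta},\bm\beta^\star\rangle\ge0$, we get $\|2\bm\beta^\star+\bm h\|=\|\bm\beta^\star+\hat{\bm\beta}\|\ge\|\bm\beta^\star\|\ge c_1\sqrt s$ by \eqref{(3)}. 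This yields a left side of at least $\tfrac{c_3c_1^2 s}{4}\|\bm h\|^2$, or alternatively the mixed bound $\tfrac{c_3}{4}\|\bm h\|^2\|\bm\beta^\star+\hat{\bm\beta}\|^2$ kept intact.

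\textbf{Step 2: bound the noise term.} By Cauchy--Schwarz and the upper half of Assumption 3,
\[
\Bigl|\tfrac1n\sum_i\varepsilon_i\bm h^\top\bm Z_i\bm y\Bigr|\le \sup_{\|\bm u\|=\|\bm v\|=1}\Bigl|\tfrac1n\sum_i\varepsilon_i\bm u^\top\bm Z_i\bm v\Bigr|\,\|\bm h\|\|\bm y\|.
\]
I control the supremum by a $1/2$-net (more precisely a net of size $(1+2n)^{s}$ per factor, matching the $\ln(1+2n)$ in $r_n$) together with sub-Gaussian tails. For fixed $\bm u,\bm v$, the sum $\sum_i\varepsilon_i\bm u^\top\bm Z_i\bm v$ is sub-Gaussian with proxy $\sigma^2 n c_4$. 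A union bound then gives, outside an event of probability $p_1$, a bound of order $\sigma\sqrt{c_4}\sqrt{s\ln(1+2n)/n}$. The event of probability $p_2$ would come from a crude bound on $\frac1n\sum\varepsilon_i^2$, needed to absorb the net-approximation error; the condition $c_4\ge 3/(ns)$ is presumably what makes that discretization error dominated. Dividing by $\|\bm h\|\|\bm\beta^\star+\hat{\bm\beta}\|$ against Step 1 then contributes $\tfrac{10\sigma\sqrt{c_4}}{c_1c_3}\sqrt{\ln(1+2n)/n}$ after using $\|\bm\beta^\star+\hat{\bm\beta}\|\ge c_1\sqrt s$. The factors of $\sqrt s$ cancel.

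\textbf{Step 3: bound the penalty difference.} Concavity on $(0,\infty)$ together with Assumption 1(iii),(iv) gives $p'_{\lambda_n}\le\lambda_n\varrho$, so $p_{\lambda_n}$ is $\lambda_n\varrho$-Lipschitz. Hence the penalty difference is at most $\lambda_n\varrho\|\bm h\|_1\le\lambda_n\varrho\sqrt s\|\bm h\|$. Dividing by $\tfrac{c_3c_1^2s}{4}\|\bm h\|$ gives $\tfrac{4}{c_1^2c_3}\lambda_n\varrho/\sqrt s$. Combining Steps 1--3 and taking $C_0$ as the maximum of the two constants yields $\|\bm h\|\le r_n$ on an event of probability at least $1-p_1-p_2$.

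\textbf{Main obstacle.} The hard step is the sign ambiguity and the quadratic (in $\bm h$) nature of the left side. The global-sign convention must be made explicit; the theorem implicitly assumes $\hat{\bm\beta}_1^o$ is aligned with $\bm\beta_1^\star$. Without it one only gets a bound on $\min\|\hat{\bm\beta}\pm\bm\beta^\star\|$. Getting the net argument to produce exactly the stated $p_1,p_2$ and constant $10$ also requires care.
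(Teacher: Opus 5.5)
Your proposal is correct and follows the paper's proof essentially step for step. It uses the same basic inequality with the factorization $(\hat{\bm\beta}_1^o+\bm\beta_1^\star)^\top\bm Z_i^{11}(\hat{\bm\beta}_1^o-\bm\beta_1^\star)$, the same lower bound from Assumption~3, and the same $\lambda_n\varrho\sqrt{s}$-Lipschitz bound on the penalty difference; it also controls the noise supremum by the same net argument as Lemma~1, where, as you guessed, $c_4\geq 3/(ns)$ absorbs the discretization error coming from the bound on $\frac{1}{n}\sum_i\varepsilon_i^2$. The paper resolves the sign issue you flag exactly as you propose, assuming without loss of generality (legitimate because the objective is even) that $\|\hat{\bm\beta}_1^o-\bm\beta_1^\star\|\leq\|\hat{\bm\beta}_1^o+\bm\beta_1^\star\|$; this gives $\|\hat{\bm\beta}_1^o+\bm\beta_1^\star\|\geq\|\bm\beta_1^\star\|\geq c_1\sqrt{s}$ and, after dividing by $\|\bm h\|$, the constants $10\sigma\sqrt{c_4}/(c_1c_3)$ and $4/(c_1^2c_3)$.
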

	
	The condition $c_4 \geq 3/(ns)$ is not restrictive, since the right‑hand side diminishes to near zero for any practical $n$ and $s$, so that any finite uniform bound $c_4$ will automatically satisfy it. Theorem~\ref{theorem1} shows that the estimation error of the oracle estimator is bounded by $r_n$. Under the standard high‑dimensional scaling $(s\ln n + \ln d)/n \to 0$, one can choose $\lambda_n$ such that $\lambda_n/\sqrt{s} \to 0$ to imply $r_n \to 0$ and thus the consistency of $\hat{\bm\beta}_{1}^o$ as $n \to \infty$.
	
	Building on this oracle result, a standard and necessary step for analyzing the high-dimensional estimator is to impose conditions on the design matrix $\bm Z_i$ to control its column correlations.
	
	\begin{assumption}\label{assumption4}
		There exists a constant $c_5>0$ such that
		\begin{equation*}
			|\sum\limits_{i=1}^n\bm Z_i^{\Gamma^\star\Gamma^\star} \otimes \bm Z_i^{\Gamma^{\star c}\Gamma^{\star c}}|_\infty\leq c_5\sqrt{n},~|\sum\limits_{i=1}^n\bm Z_i^{\Gamma^\star\Gamma^\star}\otimes \bm Z_i^{\Gamma^{\star c}\Gamma^\star}|_\infty\leq c_5\sqrt{n},
		\end{equation*}	
		where $\otimes$ is the Kronecker product.
	\end{assumption}
	
	The inequalities in Assumption 4 are similar to the partial orthogonality condition in \cite{Huang2008} for linear models, which was later extended to quadratic models by \cite{Fan2018}. They can be regarded as that the sub-matrix of $\bm Z_i$ corresponding to $\Gamma^\star$ and the complementary matrix being orthogonal. 

	\begin{theorem}\label{theorem2}

Under model \eqref{qm} and Assumption 1-4, suppose that the triple $(n, s, d)$ satisfy
			\begin{equation}\label{ns_con}
			36c_2^2c_5C_0s^2\leq \sqrt{n}, \quad\sqrt{\frac{2\ln(1+2n)}{n}}\leq c_2C_0^{-1},\quad \frac{3}{\sqrt{ns}}\leq c_2
		\end{equation} and
		\begin{equation}\label{lambdan_con}
			\max\Big\{\sqrt{\frac{s\ln(1+2n)}{n}},6\sqrt{2}c_2\sigma s\sqrt{\frac{s\ln(1+2n)+\ln d}{n}}\Big\}\leq \lambda_n\varrho \leq\frac{c_2\sqrt{s}}{\sqrt{2}C_0}.
		\end{equation}
		Then, for any $\mu \leq c_1^2c_3s$ and
		\begin{equation}\label{rns_con}
			2(4c_3+3c_4)c_2r_n+10\sigma\sqrt{c_4}\sqrt{\frac{\ln (1+2n)}{n}}\leq c_1^2c_3\sqrt{s},
		\end{equation}
		there exists a local minimizer $\hat{\bm\beta}$ of \eqref{prob} such that
		\begin{itemize}
			\item[(i)]  $ \mathbb{P}(\hat{\bm\beta}_{{\Gamma^\star}^c}=0)\geq 1-p_1-2p_2-\frac{2}{n}$; and
			\item[(ii)] $\mathbb{P}\left(\min\{\|\hat{\bm\beta}+\bm\beta^\star\|,\|\hat{\bm\beta}-\bm\beta^\star\|\}\leq r_n\right)\geq  1-p_1-2p_2-\frac{2}{n}$.
		\end{itemize}			
	\end{theorem}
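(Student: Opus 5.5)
We will use the standard oracle/primal--dual witness construction. The candidate is the zero-padded oracle estimator, $\hat{\bm\beta}=(\hat{\bm\beta}_1^o,\bm 0)$, where $\hat{\bm\beta}_1^o$ is the oracle estimator of Theorem~\ref{theorem1} (chosen with the sign convention closer to $\bm\beta_1^\star$; this sign ambiguity is why part (ii) involves $\min\{\|\hat{\bm\beta}\pm\bm\beta^\star\|\}$). We then show that, with high probability, this vector is a local minimizer of \eqref{prob}. Part (i) is then immediate by construction. Part (ii) follows from Theorem~\ref{theorem1}, because $\|\hat{\bm\beta}-\bm\beta^\star\|=\|\hat{\bm\beta}_1^o-\bm\beta_1^\star\|\le r_n$ on the event of probability $1-p_1-p_2$. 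The two extra events, costing $p_2+2/n$, come from controlling the noise terms in the off-support gradient.

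\textbf{Step 1 (on-support properties).} On the event $\|\hat{\bm\beta}_1^o-\bm\beta_1^\star\|\le r_n$, the condition $\lambda_n\varrho\le c_2\sqrt s/(\sqrt2 C_0)$ together with $\sqrt{2\ln(1+2n)/n}\le c_2/C_0$ gives $r_n\le$ a constant multiple of $c_2$. Combined with \eqref{(3)}, we expect $r_n\ll c_1$, so $\hat{\bm\beta}_1^o$ has no zero coordinates and its magnitudes lie in $[c_1/2,\,2c_2]$. Hence $P_{\lambda_n}$ is differentiable at $\hat{\bm\beta}_1^o$, and the oracle first-order condition $\nabla_{\Gamma^\star}L(\hat{\bm\beta})+p'_{\lambda_n}(|\hat\beta_j|)\mathrm{sgn}(\hat\beta_j)=0$ holds exactly.

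\textbf{Step 2 (strict dual feasibility off the support).} For $j\in\Gamma^{\star c}$ we compute
\[
\nabla_j L(\hat{\bm\beta})=\frac1n\sum_i(\hat{\bm\beta}^\top\bm Z_i\hat{\bm\beta}-y_i)\,\bm e_{d,j}^\top\bm Z_i\hat{\bm\beta}.
\]
Substituting $y_i=\bm\beta^{\star\top}\bm Z_i\bm\beta^\star+\varepsilon_i$ splits this into two parts. The first is a deterministic part, $\frac1n\sum_i(\hat{\bm\beta}_1^\top\bm Z_i^{\Gamma\Gamma}\hat{\bm\beta}_1-\bm\beta_1^{\star\top}\bm Z_i^{\Gamma\Gamma}\bm\beta_1^\star)(\bm Z_i^{\Gamma^c\Gamma}\hat{\bm\beta}_1)_j$. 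Writing it as a Kronecker product contraction and applying Assumption~\ref{assumption4}, it is bounded by roughly $c_5 s^2 (2c_2)^2 r_n/\sqrt n$, which is small by $36c_2^2c_5C_0s^2\le\sqrt n$. The second is a noise part, $\frac1n\sum_i\varepsilon_i(\bm Z_i^{\Gamma^c\Gamma}\hat{\bm\beta}_1)_j$. Because $\hat{\bm\beta}_1$ depends on $\varepsilon$, we control it uniformly over an $\epsilon$-net of the ball of radius $2c_2\sqrt s$ in $\mathbb R^s$; the net has cardinality $(1+2n)^s$, which is where the $s\ln(1+2n)$ comes from. A union bound over $d$ coordinates and the net, using sub-Gaussian tails with the normalization \eqref{Z-std}, gives a bound of order $c_2\sigma s\sqrt{(s\ln(1+2n)+\ln d)/n}$ with probability $1-2/n$. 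The lower bound on $\lambda_n\varrho$ in \eqref{lambdan_con} then yields $|\nabla_jL(\hat{\bm\beta})|<\lambda_n\varrho=\lim_{t\to0^+}p'_{\lambda_n}(t)$. So $0$ lies in the interior of the subdifferential in those coordinates.

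\textbf{Step 3 (local minimality).} Strict dual feasibility handles perturbations that move off-support coordinates: for small $\bm\delta$, the penalty increase $\approx\lambda_n\varrho|\delta_j|$ dominates the first-order loss change. For on-support perturbations, we need the restricted Hessian $\nabla^2_{\Gamma\Gamma}L(\hat{\bm\beta})-\mu\bm I$ to be positive definite; weak convexity gives $p''\ge-\mu$ in the generalized sense, via local Lipschitz continuity of $p'$. The Hessian is $\frac1n\sum_i[2(\bm Z_i\hat{\bm\beta}_1)(\bm Z_i\hat{\bm\beta}_1)^\top+(\hat{\bm\beta}_1^\top\bm Z_i\hat{\bm\beta}_1-y_i)\bm Z_i]$. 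The first term is at least $2c_3\|\hat{\bm\beta}_1\|^2\ge 2c_3c_1^2 s\cdot$const by Assumption~\ref{assumption3}. The second term is bounded by $(4c_3+3c_4)c_2r_n\sqrt s$-type quantities plus a noise term $10\sigma\sqrt{c_4}\sqrt{\ln(1+2n)/n}$, again via a net argument; this accounts for the other $p_2$. Condition \eqref{rns_con} and $\mu\le c_1^2c_3 s$ then give positive definiteness. A second-order Taylor expansion combining the two directions, with cross terms absorbed by the strict margin from Step 2, shows $\hat{\bm\beta}$ is a strict local minimizer. Intersecting the events gives probability $1-p_1-2p_2-2/n$.

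\textbf{Main obstacle.} The hardest step is Step 2's noise term, since $\hat{\bm\beta}_1^o$ is data-dependent. The uniform net argument has to be matched carefully to the constants $6\sqrt2c_2\sigma s$, and the Kronecker bookkeeping must produce exactly the $s^2$ factor.
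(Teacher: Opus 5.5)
Your strategy is the paper's own. You zero-pad the oracle estimator $\hat{\bm\beta}_1^o$ and split $\nabla L_n(\hat{\bm\beta})_{\Gamma^{\star c}}$ into a Kronecker term, controlled by Assumption~\ref{assumption4}, and a noise term, controlled by nets. You then check $\bm g^\top\nabla^2L_n(\hat{\bm\beta})\bm g-\mu\|\bm g\|^2>0$ for $\bm g$ supported on $\Gamma^\star$ on the event $E^1$ of Lemma~\ref{lemma 1}. You differ from the paper in two places.

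\emph{The noise term.} You use one net over a ball of radius $R=2c_2\sqrt s$ containing $\hat{\bm\beta}_1^o$. The paper instead writes $\hat{\bm\beta}_1^o=(\hat{\bm\beta}_1^o-\bm\beta_1^\star)+\bm\beta_1^\star$ and uses two events: $E^2$, a net over $\|\bm u\|\le r_n$, and $E^3$, a fixed vector with a union bound over $d$ only. The paper's split is sharper: the fixed-vector term carries no $s\ln(1+2n)$, and the net term is multiplied by the small $r_n$. However, the $s\ln(1+2n)$ your net costs is already paid for in the second lower bound of \eqref{lambdan_con}. The union-bound term comes out at most $\frac{\sqrt2}{3}\lambda_n\varrho$ at failure probability $2/n$, so one event does the job at the same total cost $p_2+2/n$. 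A $1/n$-net of a radius-$R$ ball has $(1+2Rn)^s$ points, so use resolution $R/n$ to keep the count at $(1+2n)^s$.

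\emph{Local minimality.} The paper proves only $\|\nabla L_n(\hat{\bm\beta})_{\Gamma^{\star c}}\|_\infty\le\lambda_n\varrho$ and then cites Lemma~\ref{secondcon-opt}, which tests only directions supported on $\Gamma^\star$. You prove the strict inequality and absorb the cross terms with the margin. Insisting on strictness is the right call: with finite slope at zero, equality is not enough. For MCP with $\hat\beta_j=0$ and $\nabla_jL_n(\hat{\bm\beta})=-\lambda_n$, one gets $F(\hat{\bm\beta}+t\bm e_{d,j})-F(\hat{\bm\beta})=\frac12\big([\nabla^2L_n(\hat{\bm\beta})]_{jj}-1/b\big)t^2+o(t^2)$, which can be negative. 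The paper's six-term bound does in fact yield strict inequality, since several pieces are strictly below $\lambda_n\varrho/6$.

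Three things need fixing.

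(1) Step 1's claim $r_n\ll c_1$ does not follow from the hypotheses. They give $r_n\le\sqrt2c_2$, and \eqref{rns_con} gives $r_n\le c_1^2\sqrt s/(8c_2)$; neither forces $r_n<c_1$ once $s$ is large. You do not need it. The map $t\mapsto p_{\lambda_n}(|t|)+\frac{\mu}{2}t^2$ is convex on all of $\mathbb{R}$: its derivative is nondecreasing on each half-line, at least $\lambda_n\varrho$ on $(0,\infty)$, and at most $-\lambda_n\varrho$ on $(-\infty,0)$. So near $\hat{\bm\beta}_1^o$ the restricted objective is a locally strongly convex function, $\tilde L_n-\frac{\mu}{2}\|\cdot\|^2$, plus a convex one. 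Global optimality of $\hat{\bm\beta}_1^o$ then gives quadratic growth whether or not some coordinate vanishes. The paper makes the same unstated assumption when it writes $\nabla\tilde P_{\lambda_n}(\hat{\bm\beta}_1^o)$ and $\partial^2p_{\lambda_n}(|\hat\beta_j|)$.

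(2) The Kronecker term is at most $6c_2^2c_5s^{5/2}r_n/\sqrt n$, not of order $s^2$. Three $\ell_1$-to-$\ell_2$ conversions give $s^{3/2}$, and two norms of size $O(c_2\sqrt s)$ give another factor $s$. The extra $\sqrt s$ is harmless, because $\sqrt s\,r_n=C_0\big(\sqrt{s\ln(1+2n)/n}+\lambda_n\varrho\big)\le 2C_0\lambda_n\varrho$ by the first lower bound in \eqref{lambdan_con}. Then $36c_2^2c_5C_0s^2\le\sqrt n$ bounds the term by $\lambda_n\varrho/3$.

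(3) The Hessian noise term in Step 3 lives on $E^1$, the same event as in Theorem~\ref{theorem1}, so it costs nothing extra. The second $p_2$ comes from bounding $\frac1n\sum_i\varepsilon_i^2$, which you need to discretize your Step 2 net.
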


	\begin{remark}\label{rem:scaling}\emph{The conditions in Theorem~\ref{theorem2} define a non‑asymptotic feasible region for the problem dimensions $(n, s, d)$ and the regularization parameter $\lambda_n$, within which the desired statistical guarantees hold with explicit constants. From an asymptotic perspective ($n \to \infty$, allowing $s, d \to \infty$), these conditions are naturally satisfied under standard sparsity assumptions.}
		
		\emph{The inequalities in \eqref{ns_con} impose constraints on the scaling between $n$ and $s$. The first inequality, $36c_2^2c_5C_0s^2 \leq \sqrt{n}$, implies $s = O(n^{1/4})$. The remaining two inequalities involve terms that decay to zero as $n$ grows and thus are satisfied for sufficiently large $n$.}
		
		\emph{The admissible range for $\lambda_n$ in \eqref{lambdan_con} requires that the lower bound (which dominates the noise) does not exceed the upper bound (which preserves the signal). Under the standard high‑dimensional scaling $(s\ln n + \ln d)/n \to 0$ and the sparsity condition $s = O(n^{1/4})$, the lower bound is $o(1)$ while the upper bound is $O(n^{1/8})$. Hence, for large $n$, the admissible interval for $\lambda_n$ is non‑empty. Moreover, by choosing $\lambda_n$ to be of the same order as the lower bound (which tends to zero), we ensure $\lambda_n \to 0$ and $\lambda_n/\sqrt{s} \to 0$. The upper bound is only needed to control the bias in the non‑asymptotic analysis and does not conflict with $\lambda_n \to 0$ asymptotically.}
		
		\emph{The condition $\mu \leq c_1^2c_3s$ restricts the weak convexity parameter $\mu$ of the penalty function, ensuring that it does not dominate the curvature of the loss function. For typical weakly convex penalties (e.g., SCAD, MCP), $\mu$ is a fixed constant. If $s \to \infty$, the right‑hand side grows unbounded, so a fixed $\mu$ satisfies the inequality for large $n$; if $s$ remains bounded, then the condition imposes a mild fixed upper bound on $\mu$.}
		
		\emph{The inequality \eqref{rns_con} involves $r_n = C_0\bigl(\sqrt{\ln(1+2n)/n} + \lambda_n\varrho/\sqrt{s}\bigr)$. Because $\lambda_n/\sqrt{s} \to 0$ as argued above, we have $r_n \to 0$ as $n \to \infty$, which implies the estimation error bound in Theorem~\ref{theorem2} vanishes asymptotically.}
		
		\emph{In summary, these conditions jointly describe a regime where sparsity grows slowly and dimension may grow exponentially with $n$, a typical setting in high‑dimensional statistics. Therefore, all conditions in Theorem~\ref{theorem2} are compatible with both finite‑sample guarantees and asymptotic consistency.}
	\end{remark}

	\begin{remark}\emph{
			For the sparse phase retrieval problem, \citet{tonycai2016} considered parameter estimation of real-valued signals by minimizing the empirical $\ell_2$ loss function. Their results show that the estimator $\hat{\bm\beta} $ satisfies $\min\{\|\hat{\bm\beta}+\bm\beta^\star\|,\|\hat{\bm\beta}-\bm\beta^\star\|\}=O(\sqrt{\ln d /n})$ as $\ln d /n \rightarrow 0$. In our result, when the regularization parameter satisfies $\lambda_n=O(\sqrt{(s\ln n+\ln d) /n})$, the error bound simplifies to $O(\sqrt{\ln d /n})$ under the assumption that $s\ln(n)=O(\ln d)$, which shows that our estimator attains the known optimal rate for sparse phase retrieval.
			Recently, \citet{huangmeng2020} studied the estimation performance of the nonlinear Lasso of
			phase retrieval. Their results show that the estimator $\hat{\bm\beta}$ satisfies $\min\{\|\hat{\bm\beta}+\bm\beta^\star\|,\|\hat{\bm\beta}-\bm\beta^\star\|\}\leq \|\varepsilon\|/\sqrt{n}$. According to \citet{cai2009recovery}, the error $\bm\varepsilon$ satisfies $\|\bm\varepsilon\|\leq \sigma\sqrt{n+2\sqrt{n \ln n}}$ with high probability as $n\rightarrow \infty$. In this case, the error bound simplifies to $\sigma\sqrt{1+2\sqrt{n\ln n}/n}$. It is clear that the estimation error exceeds the estimation error bound established in our work.}
	\end{remark}

	\section{Optimization Algorithm}
	In this section, we discuss the numerical computation of the problem to assess the performance of the proposed method.  Since $n$ and $d$ are fixed, we drop them in $\lambda_n=\lambda$ and write (\ref{prob}) as  
	\begin{equation}\label{prob-opt}
		\min_{\beta\in\mathbb{R}^p} F(\bm{\beta}):= L(\bm\beta)+P_{\lambda}(\bm{\beta}),
	\end{equation}
	where $L(\bm{\beta}):=\frac{1}{4n} \sum_{i=1}^n ( \bm{\beta}^{\top} \bm{Z}_i \bm{\beta} - y_i )^2$ and $\lambda>0$. Since $L(\bm{\beta})$ is smooth and 	$P_{\lambda}(\bm{\beta})$ is weakly convex-concave, we employ the PGA to solve the problem (\ref{prob-opt}). The PGA is an established classical algorithm \citep{beck2017first} and has been applied in various inverse and optimization problems (e.g., \cite{bolte2018}, \citet{2019structured},\citet{zhang2023}, \cite{fan2025robust}). The core component of PGA is the proximal operator. Recall that, given a function $P: \mathbb{R}^d\to (-\infty, \infty)$, the proximal operator associated with $P$ is defined as
	\begin{equation*}
		\mathrm{prox}_{P}(\bm{v})
		= \arg\min_{\bm{u}}\left\{
		\frac{1}{2}\|\bm{u}-\bm{v}\|^2 + P(\bm{u})
		\right\}.
	\end{equation*}
	In particular, given a weight vector $\bm{w} = (w_1, \dots, w_d)^\mathsf{T}$ with $w_j > 0$ for all $j$, we denote the proximal operator of the weighted $\ell_1$-norm $\|\bm{w} \circ \bm{u}\|_1$ at $\bm{v}$ by
	$$
	\mathcal{S}(\bm{v}, \bm{w}) := \bm{v} - \max\{ -\bm{w}, \min\{\bm{v}, \bm{w} \} \},
	$$
	where the $\max$ and $\min$ operations are performed component-wise.  
	Note that this reduces to the classical soft thresholding operator $\mathrm{prox}_{\lambda\|\cdot\|_1}(\bm{v})$ when $w_j = \lambda$ for all $j$.
	For proximal operators that does not admit a closed-form solution, we can adopt weighted $\ell_1$ algorithm based on the concavity of the regularization function.  
	
	This analysis yields two alternative fixed-point characterizations of the minimizers as follows.

	\begin{proposition}\label{proposition 1}
		There exists a constant $r>0$, such that for any $\tau \in (0,\hat{L}^{-1}]$ and minimizer $\hat{\bm\beta}$ of problem \eqref{prob}, it holds
		\begin{equation}\label{fixed1}
			\hat{\bm\beta} = \operatorname{prox}_{\tau P_{\lambda}} \left( \hat{\bm\beta} - \tau \nabla L(\hat{\bm\beta}) \right)
		\end{equation}
		and
		\begin{equation}\label{fixed2}
			\hat{\bm\beta}=\mathcal{S}(\hat{\bm\beta}-\tau\nabla L(\hat{\bm\beta}),\tau \hat{\bm{w}}_\lambda),
		\end{equation}
where $\hat{L}=\sup_{\|\bm\beta\|\in B_r}\|\nabla^2L(\bm\beta)\|$, $B_r:=\left\{\bm\beta \in \mathbb{R}^d :\|\bm\beta\|\leq r\right\}$ and $\hat{\bm{w}}_\lambda$ is a vector with $j$th element
		\begin{equation*}
			\bm{e}_{d,j}^\top\hat{\bm{w}}_\lambda=\begin{cases}
				\max\big(p_{\lambda}^{'}(|\hat{\beta}_j|),\epsilon_1\big), & \text{if } \hat{\beta}_j\neq 0,\\
				\lambda \varrho, & \text{if } \hat{\beta}_j=0.
			\end{cases}
		\end{equation*}
		Here $\epsilon_1$ is a positive number.
	\end{proposition}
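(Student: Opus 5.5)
Since $\hat{\bm\beta}$ is a minimizer (global, or local as in Theorem~\ref{theorem2}), we fix $r$ first and obtain both fixed-point identities from first-order optimality. We choose $r>0$ so that every minimizer of \eqref{prob} lies in $B_r$, and $B_r$ also contains a neighbourhood of each of them. This is possible because the loss is nonnegative and $P_\lambda\ge 0$. Hence $F(\hat{\bm\beta})\le F(\bm 0)=\frac{1}{4n}\sum_i y_i^2$. We then take the sublevel set $\{\bm\beta: F(\bm\beta)\le F(\bm 0)\}$, or, if that set is not bounded, any fixed ball containing the minimizers under consideration, and enlarge its radius by one. On $B_r$ the Hessian $\nabla^2 L$ is continuous, so $\hat L<\infty$, and $\nabla L$ is $\hat L$-Lipschitz on $B_r$.

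\textbf{Identity \eqref{fixed1}.} Fix $\tau\in(0,\hat L^{-1}]$ and write $\bm v=\hat{\bm\beta}-\tau\nabla L(\hat{\bm\beta})$. We will show that $\hat{\bm\beta}$ minimizes $\phi(\bm u)=\frac12\|\bm u-\bm v\|^2+\tau P_\lambda(\bm u)$. By Assumption~1(i), the function $\phi$ is $(1-\tau\mu)$-strongly convex whenever $\tau\mu<1$. It then suffices to check that $\bm 0\in\partial\phi(\hat{\bm\beta})$, i.e.\ $\bm 0\in\tau\nabla L(\hat{\bm\beta})+\tau\partial P_\lambda(\hat{\bm\beta})$. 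This is exactly the first-order (Fréchet/Clarke) stationarity of $\hat{\bm\beta}$ for $F$. It holds for a minimizer because $L$ is $C^2$ and $P_\lambda$ is weakly convex, so the limiting subdifferential of the sum splits.

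When $\tau\mu\ge 1$ we cannot use strong convexity, and we use the descent lemma instead. For $\bm u\in B_r$ near $\hat{\bm\beta}$,
\[
L(\bm u)\le L(\hat{\bm\beta})+\langle\nabla L(\hat{\bm\beta}),\bm u-\hat{\bm\beta}\rangle+\tfrac{\hat L}{2}\|\bm u-\hat{\bm\beta}\|^2 .
\]
Since $\tau\le 1/\hat L$, this gives $\frac{1}{\tau}\phi(\bm u)-\frac{1}{\tau}\phi(\hat{\bm\beta})\ge F(\bm u)-F(\hat{\bm\beta})\ge 0$, so $\hat{\bm\beta}$ is a local minimizer of $\phi$. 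Single-valuedness of the prox then upgrades this to a global minimizer. In the regime of interest the constraint $\mu\le$ curvature ensures $\tau\mu<1$, and that is where single-valuedness comes from.

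\textbf{Identity \eqref{fixed2}.} We argue coordinatewise from the stationarity condition $-\nabla_j L(\hat{\bm\beta})\in\partial p_\lambda(|\hat\beta_j|)\,\mathrm{sgn}$.
\begin{itemize}
\item If $\hat\beta_j\neq0$, then $\nabla_jL(\hat{\bm\beta})=-p'_\lambda(|\hat\beta_j|)\operatorname{sgn}(\hat\beta_j)$. Hence $v_j=\hat\beta_j+\tau p'_\lambda(|\hat\beta_j|)\operatorname{sgn}(\hat\beta_j)$, and soft thresholding with weight $\tau w_j$ returns $\hat\beta_j$ provided $w_j=p'_\lambda(|\hat\beta_j|)$. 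The problem is that $p'_\lambda$ may vanish (SCAD, MCP beyond $a\lambda$), while the operator $\mathcal S$ needs $w_j>0$; this is why $\max(\cdot,\epsilon_1)$ appears. So the key step is to take $\epsilon_1$ small enough, e.g.\ $\epsilon_1\le\min_j p'_\lambda(|\hat\beta_j|)$ over coordinates where this is positive. Where $p'_\lambda(|\hat\beta_j|)=0$, we have $v_j=\hat\beta_j$ and $\mathcal S$ shrinks it by $\tau\epsilon_1$. We therefore read the statement as holding up to this $\epsilon_1$-perturbation, or we choose $\epsilon_1$ as a limit. This is the main obstacle, and we will handle it by interpreting $\epsilon_1$ as any positive number not exceeding the positive values of $p'_\lambda$ and treating the zero-derivative case by continuity.
\item If $\hat\beta_j=0$, Assumption~1(iv) gives $\partial p_\lambda(0)=[-\lambda\varrho,\lambda\varrho]$. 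Hence $|\tau\nabla_jL(\hat{\bm\beta})|\le\tau\lambda\varrho$, so $|v_j|\le\tau\lambda\varrho$ and the soft threshold outputs $0$.
\end{itemize}
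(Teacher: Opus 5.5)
Your route differs from the paper's. The paper defers \eqref{fixed1} to an earlier reference. It proves \eqref{fixed2} by a majorize--minimize sandwich: it builds the weighted-$\ell_1$ surrogate $G_\tau(\cdot\,|\,\hat{\bm\beta})$ and asserts $F\le G_\tau(\cdot\,|\,\hat{\bm\beta})$ on $B_r$ for $\tau\le\hat L^{-1}$. It then uses global minimality of $\hat{\bm\beta}$ in $G_\tau(\tilde{\bm\beta}|\hat{\bm\beta})\le G_\tau(\hat{\bm\beta}|\hat{\bm\beta})=F(\hat{\bm\beta})\le F(\tilde{\bm\beta})\le G_\tau(\tilde{\bm\beta}|\hat{\bm\beta})$. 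You work instead from first-order stationarity plus weak convexity. Your first branch for \eqref{fixed1} is correct and stronger than needed. When $\tau\mu<1$, $\phi$ is strongly convex with $\partial\phi(\hat{\bm\beta})=\tau\bigl(\nabla L(\hat{\bm\beta})+\hat\partial P_\lambda(\hat{\bm\beta})\bigr)\ni\bm 0$. So every stationary point, in particular every local minimizer, is the fixed point, with no need for $\hat{\bm\beta}\in B_r$ or a descent lemma.

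The gap is your second branch ($\tau\mu\ge 1$). That $\hat{\bm\beta}$ is a local minimizer of $\phi$ does not make it the global one. Single-valuedness of a prox concerns uniqueness of the global minimizer and rules out nothing about spurious local minima; moreover, for $\tau\mu>1$, $\phi$ is nonconvex and the prox can be set-valued. Concretely, for MCP with $\tau>b$ the scalar prox is a hard threshold at $\sqrt{\tau b}\,\lambda<\tau\lambda$. Hence a zero coordinate of a local minimizer with $\sqrt{b/\tau}\,\lambda<|\nabla_jL(\hat{\bm\beta})|<\lambda$ is moved off zero. Your appeal to ``$\mu\le$ curvature'' borrows a hypothesis of Theorem~2 that Proposition~1 does not have.

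The repair is to use the freedom in $r$. For a unit vector $\bm u$, $\bm u^\top\nabla^2L(t\bm u)\bm u=\frac{3t^2}{n}\sum_i(\bm u^\top\bm Z_i\bm u)^2-\frac1n\sum_i y_i\,\bm u^\top\bm Z_i\bm u$, so $\hat L\to\infty$ as $r\to\infty$ unless all $\bm Z_i=\bm 0$. Choose $r$ with $\hat L>\mu$; then every admissible $\tau$ satisfies $\tau\mu<1$ and your first branch covers everything. This also makes your sublevel-set choice of $r$ unnecessary. That choice was shaky anyway, because $F$ need not be coercive: SCAD, MCP, TL1 and EXP are bounded, and $L(t\bm u)=L(\bm 0)$ whenever $\bm u^\top\bm Z_i\bm u=0$ for all $i$, which has nonzero solutions in phase retrieval with $d>n$.

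For \eqref{fixed2} your coordinatewise computation is correct, and you are right about $\epsilon_1$, but ``continuity'' cannot rescue it: the identity as stated is false. If $\hat\beta_j\neq0$ and $p'_\lambda(|\hat\beta_j|)<\epsilon_1$, the $j$th entry of $\mathcal S$ is $\operatorname{sgn}(\hat\beta_j)\max\{|\hat\beta_j|-\tau(\epsilon_1-p'_\lambda(|\hat\beta_j|)),0\}\neq\hat\beta_j$. For SCAD or MCP with $|\hat\beta_j|>a\lambda$ (resp.\ $b\lambda$), the typical situation for large coefficients once $\lambda$ is small, this happens for every $\epsilon_1>0$. So state and prove the corrected claim. \eqref{fixed2} holds with $\hat w_j=p'_\lambda(|\hat\beta_j|)$ on the support; the formula for $\mathcal S$ is the identity at zero weight. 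Equivalently, it holds with $\max(p'_\lambda(|\hat\beta_j|),\epsilon_1)$ if and only if $\epsilon_1\le\min_{\hat\beta_j\neq0}p'_\lambda(|\hat\beta_j|)$, which requires $p'_\lambda>0$ there (TL1, LOG, EXP).

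The paper's proof does not escape this. Its step replacing $v_j(|\beta_j|-|\gamma_j|)$ by $\max(p'_\lambda(|\gamma_j|),\epsilon_1)(|\beta_j|-|\gamma_j|)$, justified by ``the upper bound of $\partial p_\lambda$'', reverses when $|\beta_j|<|\gamma_j|$ and $p'_\lambda(|\gamma_j|)<\epsilon_1$. That is precisely the shrinkage $\mathcal S$ performs, so its majorization fails in the very case you isolated. With the weights corrected, your stationarity argument is the more robust one: it covers local minimizers, which is what Theorem~2 produces, and needs neither global minimality nor $\tilde{\bm\beta}\in B_r$. The paper's MM argument has the merit of matching the surrogate behind Algorithm~2.
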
		
	
	The above two characterizations provide the foundation for our algorithmic design. While the iterative scheme for solving \eqref{prob} is outlined in Algorithm 1 according to (\ref{fixed1}), the weight  $\hat{\bm{w}}_\lambda$ in (\ref{fixed2}) depends on the optimal solution $\hat{\bm{\beta}}$ which is unknown. To overcome this issue, we adopt the  framework of iteratively reweighted $\ell_1 $ algorithms (\cite{zou2006adaptive},\citealp{bai2024avoiding}). Within this framework, we employ the Majorization-Minimization (MM) technique, where, at each iteration, the regularization term is linearly approximated via its first-order expansion, which constructs a surrogate function. This leads to a sequence of sub-problems of the form
	\begin{equation*}
		\min_{\bm\beta \in \mathbb{R}^d} L(\bm\beta) + \|\bm{w}_\lambda^k\circ\bm{\beta}\|_1,
	\end{equation*}
	where \( \bm{w}_\lambda^k  \) is adaptively updated weights computed from the current iterate \( \bm\beta^{k} \). The next iterate 
	$\bm\beta^{k+1}$ is then updated according to the rule specified in Algorithm 2, which stems from an approximate minimization step for this surrogate function.
	
	\begin{algorithm}[h]
		\caption{}
		\label{alg:algorithm1}
		\begin{algorithmic}[1]
			\REQUIRE Data $\{y_i,\bm{Z}_i\}$, parameters $T, \epsilon>0, \delta>0, \gamma_0\in(0,1), \gamma_1\in(0,1)$
			\ENSURE $\bm{\beta}^{k+1}$
			\STATE \textbf{Initialization:} Choose a spectral initialization point $\bm\beta^0$ and set $k= 0$
			\STATE \textbf{Computation:} Compute $\bm\beta^{k+1}=\text{prox}_{\tau_k P_{\lambda}}(\bm\beta ^k-\tau_k \nabla L(\bm\beta^k))$, where $\tau_k=\gamma_1 \gamma_0^{j_k}$ and $j_k$ is the smallest nonnegative integer such that:
			\vspace{-1em}
			\begin{equation}\label{armijo1}
				F(\bm\beta^k)-F(\bm\beta^{k+1})\geq \delta \|\bm\beta^{k+1}-\bm\beta ^k\|^2.
			\end{equation}
			\vspace{-1em}
			\WHILE{$\|\bm \beta^{k+1}-\bm\beta^k\| \geq  \epsilon \max \{1,\|\bm\beta^k\|\}$ \textbf{and} $k\leq T$}
			\STATE Set $k=k+1$, go back to Step 3
			\ENDWHILE
			\STATE \textbf{Output:} $\bm{\beta}^{k+1}$
		\end{algorithmic}
	\end{algorithm}

	\begin{algorithm}[h]
		\caption{}
		\label{alg:algorithm2}
		\begin{algorithmic}[1]
			\REQUIRE Data $\{y_i,\bm{Z}_i\}$, parameters $T, \epsilon,\epsilon_1>0, \delta>0, \gamma_0\in(0,1), \gamma_1\in(0,1)$
			\ENSURE $\bm{\beta}^{k+1}$
			
			\STATE \textbf{Initialization:} Choose a spectral initialization point $\bm\beta^0$ and set $k= 0$
			
			\STATE \textbf{Weight computation:} Take $\bm w_\lambda^k=(w_1^k,\cdots,w_d^k)^\top$ with $w_{\lambda,j}^k=\max(p_{\lambda}^{'}(|\beta_j^k|),\epsilon_1)$ if $\beta_j^k\neq 0$ and $w_j^k=\lambda\varrho$ if $\beta_j^k=0$
			
			\STATE \textbf{Update:} Compute $\bm\beta^{k+1}=S(\bm\beta ^k-\tau_k \nabla L(\bm\beta^k),\lambda\tau_k\bm w^k)$, where $\tau_k=\gamma \alpha^{j_k}$ and $j_k$ is the smallest nonnegative integer such that condition (\ref{armijo1}) holds
			
			\WHILE{$\|\bm\beta^{k+1}-\bm\beta^k\| \geq  \varepsilon \max \{1,\|\bm\beta^k\|\}$ \textbf{and} $k\leq T$}
			\STATE Set $k=k+1$, go back to Step 3
			\ENDWHILE
			
			\STATE \textbf{Output:} $\bm{\beta}^{k+1}$
		\end{algorithmic}
	\end{algorithm}

Note that computing the step size $\tau_k$ is crucial, and its selection depends on the parameter $\hat L$ that is difficult to determine. To address this issue, we employ the Armijo line search method (\ref{armijo1}). The existence of a smallest $j_k$ and the admissible range for  $\tau_k$ are established in Lemma \ref{armijo} of Appendix B.
Having established the step size selection strategy,  we now turn to the convergence analysis of the algorithms.
	
	\begin{proposition}\label{alg-convergence}
		Let $\left\{\bm\beta^k\right\}$ be the sequence generated by Algorithm 1 or 2. 
		\begin{itemize}
			\item[(i)] $\left\{F(\bm\beta^k)\right\}$ and $\left\{F^k(\bm\beta^k)\right\}$ both are monotonically decreasing.
			\item[(ii)] $\lim\limits_{k\rightarrow \infty}\|\bm\beta^{k+1}-\bm\beta^k\|=0.$
			\item[(iii)] Every accumulation point of $\{\bm\beta^k\}$ generated by Algorithm 1 satisfies the fixed-point equation \eqref{fixed1}, and that generated by Algorithm 2 satisfies \eqref{fixed2}.
			
		\end{itemize}
	\end{proposition}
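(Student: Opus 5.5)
Our plan is the standard sufficient-decrease argument for proximal-gradient methods with Armijo backtracking. We treat both algorithms at once through the surrogate $F^k$. For Algorithm 1 we set $F^k\equiv F$. For Algorithm 2 we set $F^k(\bm\beta)=L(\bm\beta)+\sum_j\big[p_\lambda(|\beta^k_j|)+w^k_{\lambda,j}(|\beta_j|-|\beta^k_j|)\big]$, the linearization of the concave penalty at $\bm\beta^k$.

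\textbf{Step 1 (well-posedness and decrease).} By Lemma~\ref{armijo} of Appendix B, for each $k$ a finite smallest $j_k$ exists and $\tau_k\in[\underline\tau,\gamma_1]$ with $\underline\tau>0$ independent of $k$. For Algorithm 1 this uses weak convexity: when $\tau_k\mu<1$, the prox objective $\frac{1}{2\tau}\|\bm u-\bm v\|^2+P_\lambda(\bm u)$ is $(\tau^{-1}-\mu)$-strongly convex. Combined with the descent lemma for $L$ on a bounded set (a level set of $F$, since $F$ is coercive because the quartic loss plus a nonnegative penalty is bounded below and, under Assumption~3, grows), this gives $F(\bm\beta^k)-F(\bm\beta^{k+1})\ge(\frac{1}{2\tau}-\frac{\mu}{2}-\frac{\hat L}{2})\|\bm\beta^{k+1}-\bm\beta^k\|^2$, which dominates $\delta\|\cdot\|^2$ once $\tau$ is small.

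For Algorithm 2 the same bound holds with $F^k$ in place of $F$, since the weighted $\ell_1$ term is convex. Concavity of $p_\lambda$ on $[0,\infty)$ gives $p_\lambda(|\beta_j|)\le p_\lambda(|\beta^k_j|)+p'_\lambda(|\beta^k_j|)(|\beta_j|-|\beta^k_j|)$. Whether this also holds with the weight $\max(p'_\lambda,\epsilon_1)$ depends on the sign of $|\beta_j|-|\beta^k_j|$. This is the main obstacle. I would either take $\epsilon_1$ small and absorb the discrepancy $\epsilon_1\big||\beta_j|-|\beta_j^k|\big|$ via the Armijo test, which is stated directly on $F$, or argue that the weights agree with $p'_\lambda$ wherever it matters. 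In either case the chain $F(\bm\beta^{k+1})\le F^k(\bm\beta^{k+1})\le F^k(\bm\beta^k)=F(\bm\beta^k)$ yields (i) once the Armijo condition is verified. In practice, (i) for $F$ is simply enforced by \eqref{armijo1}.

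\textbf{Step 2 (vanishing steps).} Since $F\ge 0$, summing \eqref{armijo1} telescopes: $\delta\sum_k\|\bm\beta^{k+1}-\bm\beta^k\|^2\le F(\bm\beta^0)<\infty$. This gives (ii). It also shows that the iterates stay in the bounded level set $\{F\le F(\bm\beta^0)\}$, so accumulation points exist.

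\textbf{Step 3 (fixed points).} Let $\bm\beta^{k_l}\to\bar{\bm\beta}$ and pass to a further subsequence with $\tau_{k_l}\to\bar\tau>0$. By (ii), $\bm\beta^{k_l+1}\to\bar{\bm\beta}$ as well. For Algorithm 1, the prox map of the weakly convex $\tau P_\lambda$ is single-valued and Lipschitz for $\tau<1/\mu$, and it is continuous in $(\tau,\bm v)$. Since $\nabla L$ is continuous, taking limits in $\bm\beta^{k_l+1}=\mathrm{prox}_{\tau_{k_l}P_\lambda}(\bm\beta^{k_l}-\tau_{k_l}\nabla L(\bm\beta^{k_l}))$ gives \eqref{fixed1} with $\tau=\bar\tau$.

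For Algorithm 2, $\mathcal S$ is continuous in both arguments, but the weights jump when a coordinate crosses zero. We handle this coordinatewise:
\begin{itemize}
\item[(a)] If $\bar\beta_j\ne0$, then $w^{k_l}_j\to\max(p'_\lambda(|\bar\beta_j|),\epsilon_1)$ by continuity of $p'_\lambda$ on $(0,\infty)$.
\item[(b)] If $\bar\beta_j=0$, then $w^{k_l}_j$ is either $\lambda\varrho$ or $\max(p'_\lambda(|\beta^{k_l}_j|),\epsilon_1)\to\max(\lambda\varrho,\epsilon_1)$, using $\lim_{t\to0^+}p'_\lambda(t)=\lambda\varrho$. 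With $\epsilon_1\le\lambda\varrho$, both limits equal $\lambda\varrho$.
\end{itemize}
Hence $\bm w^{k_l}\to\hat{\bm w}_\lambda(\bar{\bm\beta})$, and passing to the limit in the update yields \eqref{fixed2}.
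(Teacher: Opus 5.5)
Your argument is sound, granting Lemma~\ref{armijo} as the paper also does, but it is organized differently from the paper's proof.

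\textbf{How the routes differ.} For Algorithm~2 the paper does not use the Armijo inequality itself to get decrease. It builds a majorize--minimize chain from three ingredients:
the concavity bound $p_\lambda(|\beta_j^{k+1}|)-p_\lambda(|\beta_j^{k}|)\le w^k_{\lambda,j}(|\beta_j^{k+1}|-|\beta_j^{k}|)$;
a Taylor bound on $L$ with a Hessian constant $G$ taken over all iterates;
and the optimality condition of the weighted-$\ell_1$ subproblem.
This gives $F(\bm\beta^k)-F(\bm\beta^{k+1})\ge(\tau_k^{-1}-G/2)\|\bm\beta^{k+1}-\bm\beta^k\|^2$, which the paper then telescopes. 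For (iii) it passes to the limit in the surrogate inequality $G^k_{\tau_k}(\bm\beta^{k+1},\bm\beta^k)\le G^k_{\tau_k}(\bm\beta,\bm\beta^k)$ and identifies the limiting minimizer as a weighted soft-thresholding fixed point.

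You instead read (i)--(ii) directly off \eqref{armijo1}, which holds at every iteration of any generated sequence. You then pass to the limit in the closed-form update.

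\textbf{What your route buys.} It gives a $k$-independent constant $\delta$. It needs no global Hessian bound, hence no boundedness of $\{\bm\beta^k\}$, which the paper asserts without adequate justification. It also sidesteps exactly the step you flagged: the paper's majorization with weights $\max(p'_\lambda,\epsilon_1)$ can fail when $p'_\lambda(|\beta_j^k|)<\epsilon_1$ and $|\beta_j^{k+1}|<|\beta_j^k|$ (e.g.\ MCP beyond $b\lambda$). Your coordinatewise treatment of the weights in (iii) makes explicit the convergence $\bm w^{k_l}\to\hat{\bm w}_\lambda(\bar{\bm\beta})$ that the paper's appeal to continuity silently assumes. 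It also correctly identifies $\epsilon_1\le\lambda\varrho$ as a hypothesis the statement omits but the limit argument needs. What the paper's route would add, if its majorization were valid, is a self-contained reason why the backtracking terminates; yours delegates that to Lemma~\ref{armijo}.

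\textbf{Points to fix in your write-up.}
\begin{itemize}
\item Your proposed remedies for the $\epsilon_1$ discrepancy do not work. The error $(\epsilon_1-p'_\lambda)\,\big||\beta_j^{k+1}|-|\beta_j^k|\big|$ is first order in the step, so no quadratic Armijo margin can absorb it. The weighted step need not even be a descent direction for $F$. Drop that discussion and rest (i)--(ii) on \eqref{armijo1} alone.
\item You have not shown $F$ is coercive. Assumption~3 concerns only the $\Gamma^\star$ block, and SCAD, MCP, TL1 and EXP are bounded. This is harmless: (iii) concerns accumulation points only, and $\bar\tau>0$ needs only a lower bound on $\tau_{k_l}$ along the convergent subsequence. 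Note, though, that for Algorithm~2 this bound comes, in your proof as in the paper's, solely from Lemma~\ref{armijo}, which is itself exposed to the same $\epsilon_1$ issue.
\item For Algorithm~1 you do not need $\bar\tau<1/\mu$. Passing to the limit in the prox-minimality inequality gives $\bar{\bm\beta}\in\mathrm{prox}_{\bar\tau P_\lambda}(\bar{\bm\beta}-\bar\tau\nabla L(\bar{\bm\beta}))$ by continuity of $P_\lambda$.
\end{itemize}
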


	\section{Numerical Experiments}
	
	In this section, we examine the finite sample performance of the proposed method through numerical simulations. All experiments are implemented using MATLAB(R2023b) and the code is available at \url{https://github.com/fjmath/sparseQMR_WCCP}.

We compare the performance of several WCCP, including SCAD, MCP, TL1, LOG, and EXP regularizers, with $\ell_1$ and $\ell_{1/2}$ regularizers.  
To enable a fair comparison across penalties where $\mu$ scales differently with $\lambda_n$, we introduce the standardized concavity
\[
\kappa :=\mu/\lambda_n,
\]
a dimensionless constant determined solely by each penalty's shape parameters (Table~\ref{tab:penalty_parameters}). This quantity captures the intrinsic curvature of the penalty and aligns with the maximum concavity measure of \citet{zhang2010nearly}, expressed here within the $\mu$-amenable framework of \citet{loh2017support}.

\begin{table}[htbp]
	\centering
	\caption{Shape parameters and standardized concavity \(\kappa = \mu/\lambda_n\) for the seven penalties, ordered by increasing concavity.}
	\label{tab:penalty_parameters}
	\begin{tabular}{lccccccc}
		\toprule
		Penalty & $\ell_1$ & MCP & SCAD & TL1 & EXP & LOG & $\ell_{1/2}$ \\
		\midrule
		Shape param. & --- & \(\gamma=3\) & \(a=3.7\) & \(a=3\) & \(\sigma=0.5\) & \(\varepsilon=0.1\) & \(q=0.5\) \\
		\(\kappa\) & \(0\) & \(\frac{1}{\gamma}\) & \(\frac{1}{a-1}\) & \(\frac{2(a+1)}{a^2}\) & \(\frac{1}{\sigma^2}\) & \(\frac{1}{\varepsilon^2}\) & \(\infty\) \\
		\bottomrule
	\end{tabular}
\end{table}

In Table \ref{tab:penalty_parameters} the shape parameters are set to commonly used values in the literature. The penalties are ordered by increasing standardized concavity \(\kappa = \mu/\lambda_n\), from the convex$\ell_1$ (\(\kappa=0\)) to the extremely concave $\ell_{1/2}$ (\(\kappa=\infty\)). A larger \(\kappa\) indicates a stronger concavity, leading to more aggressive sparsity promotion but also increased optimization difficulty. The corresponding numerical values are: \(\frac{1}{\gamma}\approx0.333\), \(\frac{1}{a-1}\approx0.370\), \(\frac{2(a+1)}{a^2}=\frac{8}{9}\approx0.889\), \(\frac{1}{\sigma^2}=4\), \(\frac{1}{\varepsilon^2}=100\).

The initial point is obtained using the sparse spectral initialization method proposed by \cite{chen2025}. For all regularization methods, the optimal step size is determined using the Armijo line search. The regularization parameter is chosen according to the scheme of \cite{chen2022error}, defined as $\lambda_n=\sqrt{\frac{c \ln (d)}{n^2}\sum_{i=1}^{n}(\bm\beta^\top\bm Z_i\bm\beta-y_i)^2}\|\bm\beta\|$, where the coefficient $c$ is determined via cross-validation. The relative error in the experiments is computed as follows,
\begin{equation*}
	\text{RelErr}=\frac{\min \left\{\|\hat{\bm\beta}-\bm\beta^\star\|,\|\hat{\bm\beta}+\bm\beta^\star\|\right\}}{\|\bm\beta^\star\|}.
\end{equation*}
Performance is then evaluated by
\begin{itemize}
	\item {Success rate}: proportion of trials with \(\text{RelErr} < 10^{-3}\);
	\item {Support recovery}: true positive rate (TPR), false positive rate (FPR), and F1 score;
	\item {Bias for large coefficients}: average relative deviation \(|(\hat{x}_j-x_j)/x_j|\) over the large‑coefficient support;
	\item {Computational time}.
\end{itemize}

\subsection{Experiment 1: Synthetic quadratic measurements}

We first study recovery performance using randomly generated quadratic measurements. The sensing matrices $\bm Z_i$ are generated as symmetric matrices with Gaussian entries, and the noise vector $\bm\varepsilon$ follows a sub-Gaussian distribution. The true signal $\bm\beta^\star$ is generated as a sparse Gaussian vector.

Additive noise $\varepsilon_i\sim\sigma\mathcal N(0,1)$ is added to the measurements $y_i$. Two problem settings are considered $
(d,s)=(128,10)$ and $(d,s)=(256,15).$
The sampling ratios are chosen as
$
n/d\in\{0.1,0.2,\dots,1.0\},
$
and each configuration is repeated 100 times. Recovery is regarded as successful when $\text{RelErr}<10^{-3}$.

Figure~\ref{fig_exa1} reports the success rates as functions of the sampling ratio $n/d$. Across all experimental settings, WCCP substantially outperform the convex $\ell_1$ regularization in the moderate undersampling regime ($0.3\le n/d\le0.6$). For example, when $n/d=0.4$ with $d=128$, $s=10$, and $\sigma=0.01$, the success rates of WCCP range from $0.30$ (MCP) to $0.72$ (TL1), while the $\ell_1$ method barely exceeds $0.01$.

Among the WCCP methods, TL1 consistently achieves the highest success rates and often reaches $0.9$ once $n/d\approx0.5$ under low noise. LOG and EXP also exhibit strong performance and closely follow TL1. The $\ell_{1/2}$ penalty shows mixed behavior relative to $\ell_1$: although it sometimes attains slightly higher F1 scores due to lower false positive rates, its success rate and estimation accuracy remain inferior to those of the WCCP methods.

Detailed statistics (mean and standard deviation) for RelErr, TPR, FPR, F1 score, bias for large coefficients, estimated sparsity, and computational time are reported in Appendix~C (Tables~6–21). The results indicate that WCCP methods consistently achieve lower estimation error, improved support recovery, and reduced shrinkage bias compared with $\ell_1$, particularly when the sampling ratio is moderate.

These results demonstrate that weakly convex concave penalties significantly improve sparse recovery under quadratic measurements, especially in undersampled regimes. In particular, TL1 and LOG provide the most favorable balance between recovery accuracy and sparsity control.

\begin{figure}[thpb]
	\centering	
	\begin{subfigure}{0.4\textwidth}
		\centering
		\includegraphics[width=\linewidth]{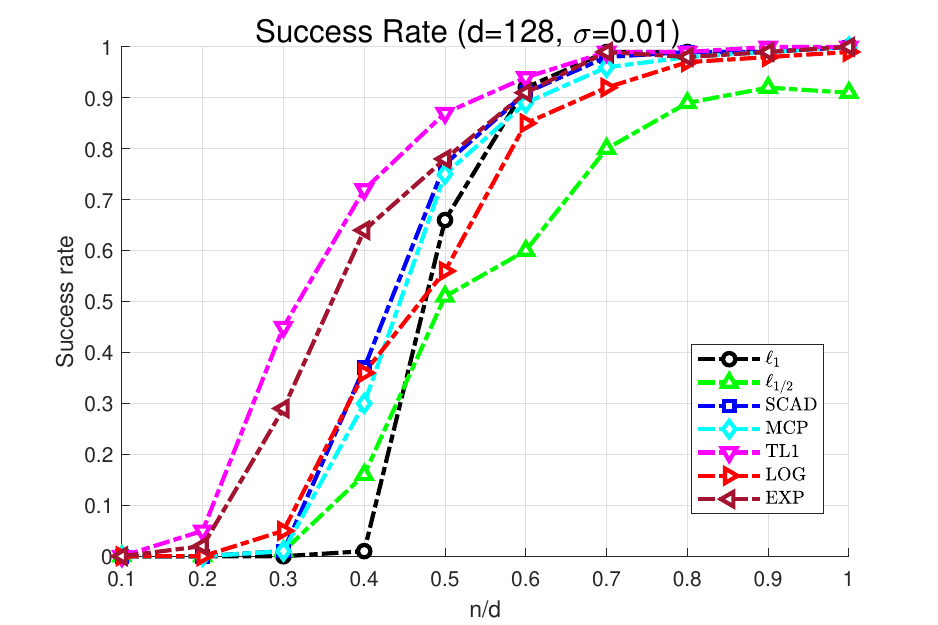}
	\end{subfigure}
	\hfill
	\begin{subfigure}{0.4\textwidth}
		\centering
		\includegraphics[width=\linewidth]{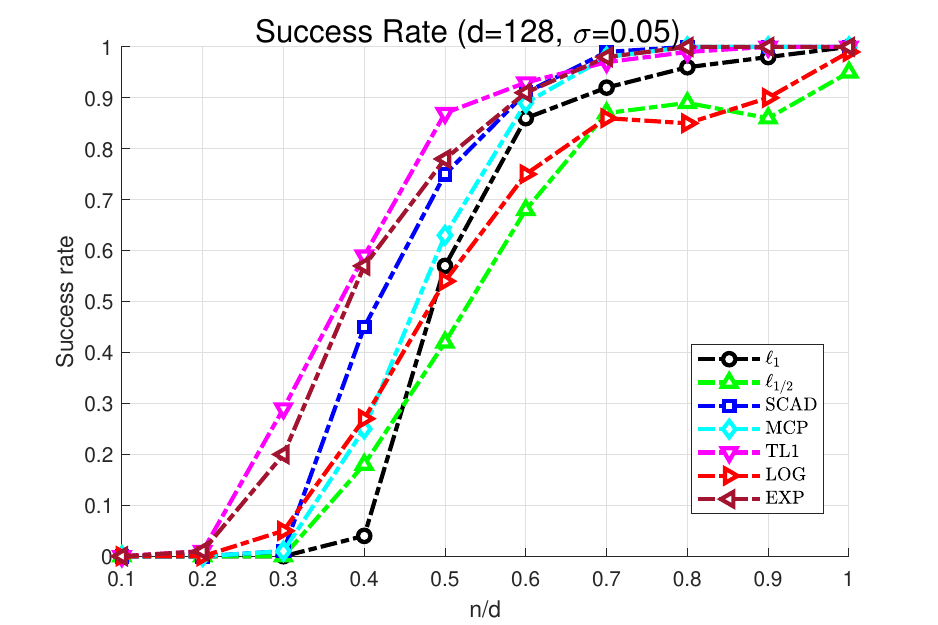}
	\end{subfigure}	
	\vspace{0.5mm}	
	\begin{subfigure}{0.4\textwidth}
		\centering
		\includegraphics[width=\linewidth]{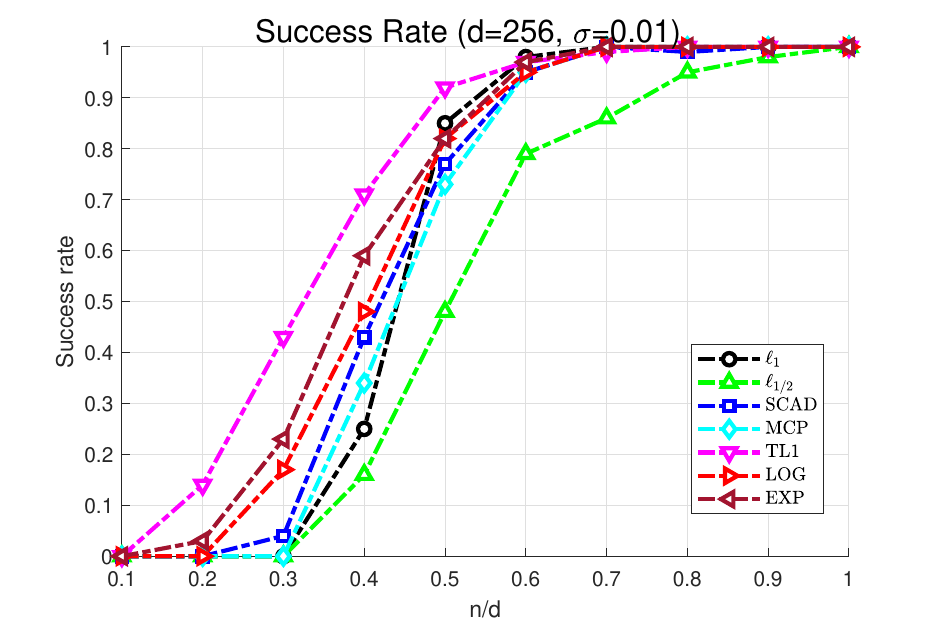}
	\end{subfigure}
	\hfill
	\begin{subfigure}{0.4\textwidth}
		\centering
		\includegraphics[width=\linewidth]{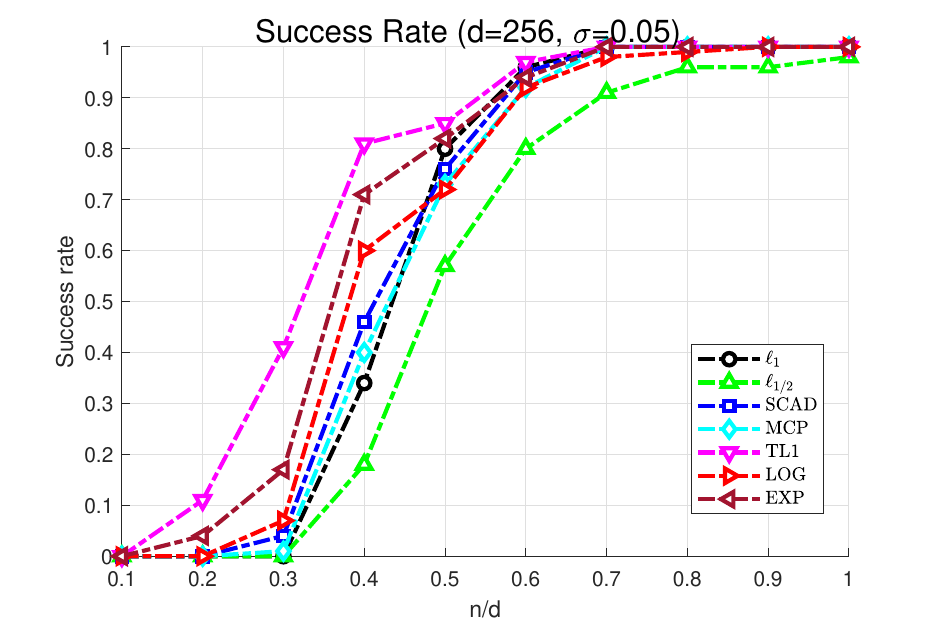}
	\end{subfigure}	
	\caption{ }
	\label{fig_exa1}
\end{figure}

\subsection{Experiment 2: Sparse phase retrieval}

The second experiment evaluates the proposed approach on sparse phase retrieval. Because the proximal operator of the EXP regularizer is computationally expensive, Algorithm~2 is used to solve the corresponding optimization problem.

The sparse signal is generated from the standard test image \texttt{cameraman.tif}. Following the wavelet-based sparsification procedure of \citet{shah2021sparse}, the image is resized to $64\times64$ pixels and transformed using a four-level Haar wavelet transform. Only the largest $5\%$ of coefficients are retained to form the sparse representation. This differs from \citet{shah2021sparse}, which retains a fixed number of coefficients.

Measurement noise is generated as $\varepsilon_i\sim\sigma\mathcal N(0,1)$.

Tables~\ref{tab:ssim_noise} and \ref{tab:ssim_results} report reconstruction quality measured by the structural similarity index (SSIM). Several WCCR penalties, including SCAD, MCP, TL1, and EXP, consistently achieve near-perfect reconstruction (SSIM $\approx0.9992$) across a wide range of measurement ratios and noise levels. In contrast, the classical $\ell_1$ regularization shows noticeable degradation when the number of measurements is limited. For example, at $n/d=0.4$, MCP already attains nearly perfect reconstruction, while $\ell_1$ achieves only SSIM $=0.708$.

The $\ell_{1/2}$ penalty is computationally efficient but appears less stable when the number of measurements is small, where reconstruction quality exhibits larger variability.

Tables~\ref{tab:time_noise} and \ref{tab:time_results} summarize the computational time. SCAD and MCP achieve competitive or faster runtimes than $\ell_1$ while maintaining superior reconstruction accuracy. For instance, at $n/d=0.2$, SCAD and MCP require $0.62$ and $0.38$ seconds respectively, compared with $0.97$ seconds for $\ell_1$. Although TL1 also achieves near-perfect reconstruction, its computational cost is substantially higher.

These results confirm that the advantages of WCCP extend to practical imaging problems. In particular, SCAD and MCP achieve highly accurate reconstructions with competitive computational cost, whereas TL1 attains excellent reconstruction quality at the expense of increased computation.

\begin{figure}[htpb]
	\centering
	\includegraphics[width=\linewidth]{"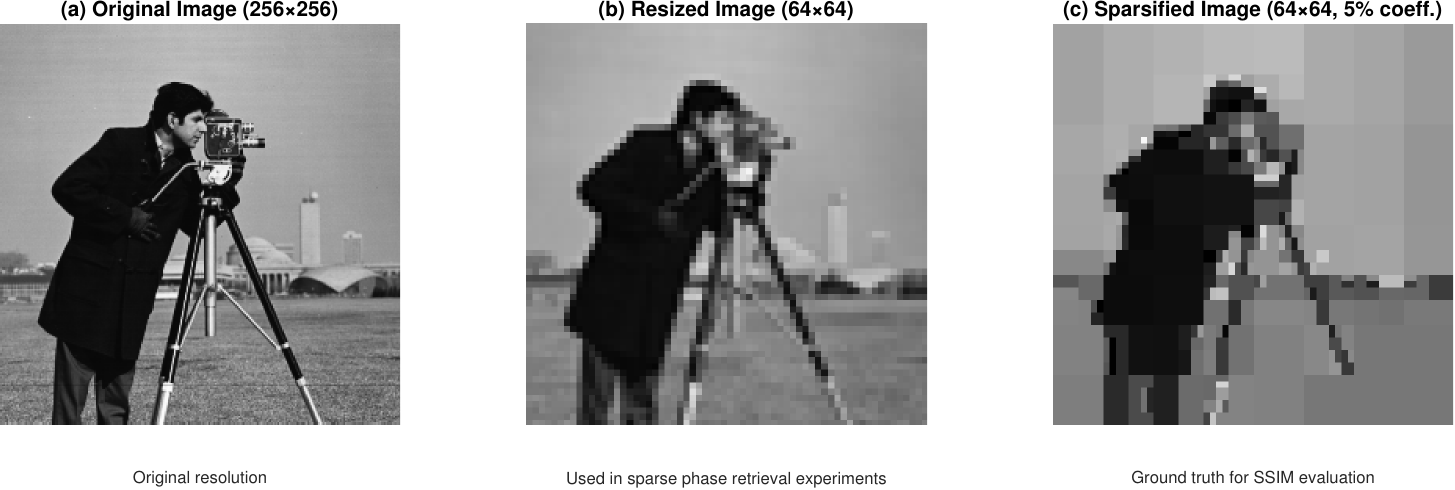"}
	\caption{}
	\label{cameraman}
\end{figure}

\begin{table}[htbp]
	\centering
	\caption{SSIM performance of sparse regularization methods at different noise levels (measurement ratio $n/d = 0.50$)}
	\label{tab:ssim_noise}
	\begin{threeparttable}
		\scriptsize
		\renewcommand{\arraystretch}{1.4}
		\begin{tabular}{@{}l *{6}{c}@{}}
			\toprule
			\multirow{2}{*}{Method} & \multicolumn{6}{c}{Noise Level $\sigma$} \\
			\cmidrule(l){2-7}
			& {0} & {$10^{-5}$} & {$10^{-4}$} & {$10^{-3}$} & {$10^{-2}$} & {$10^{-1}$} \\
			\midrule
			$\ell_1$    & \shortstack{0.9250\\{\tiny (0.2195)}} 
			& \shortstack{0.8525\\{\tiny (0.2946)}} 
			& \shortstack{0.8248\\{\tiny (0.3049)}} 
			& \shortstack{0.9750\\{\tiny (0.1008)}} 
			& \shortstack{0.9256\\{\tiny (0.2216)}} 
			& \shortstack{0.7986\\{\tiny (0.3106)}} \\
			$\ell_{1/2}$ & \shortstack{0.9992\\{\tiny (0.0000)}} 
			& \shortstack{0.9655\\{\tiny (0.1505)}} 
			& \shortstack{0.9205\\{\tiny (0.2422)}} 
			& \shortstack{0.9614\\{\tiny (0.1687)}} 
			& \shortstack{0.9992\\{\tiny (0.0000)}} 
			& \shortstack{0.9212\\{\tiny (0.2402)}} \\
			SCAD  & \shortstack{0.9991\\{\tiny (0.0000)}} 
			& \shortstack{0.9575\\{\tiny (0.1862)}} 
			& \shortstack{0.9991\\{\tiny (0.0002)}} 
			& \shortstack{0.9607\\{\tiny (0.1720)}} 
			& \shortstack{0.9992\\{\tiny (0.0000)}} 
			& \shortstack{0.9991\\{\tiny (0.0001)}} \\
			MCP   & \shortstack{0.9992\\{\tiny (0.0000)}} 
			& \shortstack{0.9581\\{\tiny (0.1836)}} 
			& \shortstack{0.9992\\{\tiny (0.0000)}} 
			& \shortstack{0.9992\\{\tiny (0.0000)}} 
			& \shortstack{0.9992\\{\tiny (0.0000)}} 
			& \shortstack{0.9992\\{\tiny (0.0000)}} \\
			TL1   & \shortstack{0.9992\\{\tiny (0.0000)}} 
			& \shortstack{0.9992\\{\tiny (0.0000)}} 
			& \shortstack{0.9624\\{\tiny (0.1645)}} 
			& \shortstack{0.9992\\{\tiny (0.0000)}} 
			& \shortstack{0.9992\\{\tiny (0.0000)}} 
			& \shortstack{0.9992\\{\tiny (0.0000)}} \\
			LOG   & \shortstack{0.9605\\{\tiny (0.1731)}} 
			& \shortstack{0.8780\\{\tiny (0.2961)}} 
			& \shortstack{0.8429\\{\tiny (0.3208)}} 
			& \shortstack{0.9251\\{\tiny (0.2279)}} 
			& \shortstack{0.9227\\{\tiny (0.2355)}} 
			& \shortstack{0.8830\\{\tiny (0.2840)}} \\
			EXP   & \shortstack{0.9992\\{\tiny (0.0000)}} 
			& \shortstack{0.9992\\{\tiny (0.0000)}} 
			& \shortstack{0.9258\\{\tiny (0.2259)}} 
			& \shortstack{0.9992\\{\tiny (0.0000)}} 
			& \shortstack{0.9992\\{\tiny (0.0000)}} 
			& \shortstack{0.9608\\{\tiny (0.1716)}} \\
			\bottomrule
		\end{tabular}
	\end{threeparttable}
\end{table}

\begin{table}[htbp]
	\centering
	\caption{Computational time of sparse regularization methods at different noise levels (measurement ratio $n/d = 0.50$)}
	\label{tab:time_noise}
	\begin{threeparttable}
		\scriptsize
		\renewcommand{\arraystretch}{1.4}
		\begin{tabular}{@{}l *{6}{c}@{}}
			\toprule
			\multirow{2}{*}{Method} & \multicolumn{6}{c}{Noise Level $\sigma$} \\
			\cmidrule(l){2-7}
			& {0} & {$10^{-5}$} & {$10^{-4}$} & {$10^{-3}$} & {$10^{-2}$} & {$10^{-1}$} \\
			\midrule
			$\ell_1$    & \shortstack{2.5091\\{\tiny (0.0971)}} 
			& \shortstack{2.8332\\{\tiny (0.5230)}} 
			& \shortstack{2.7009\\{\tiny (0.4100)}} 
			& \shortstack{3.4049\\{\tiny (0.3444)}} 
			& \shortstack{2.4327\\{\tiny (0.0431)}} 
			& \shortstack{2.4057\\{\tiny (0.0767)}} \\
			$\ell_{1/2}$ & \shortstack{0.7674\\{\tiny (0.1992)}} 
			& \shortstack{0.9997\\{\tiny (0.5615)}} 
			& \shortstack{1.2502\\{\tiny (0.8086)}} 
			& \shortstack{1.1966\\{\tiny (0.7994)}} 
			& \shortstack{0.7921\\{\tiny (0.3556)}} 
			& \shortstack{0.8601\\{\tiny (0.5791)}} \\
			SCAD  & \shortstack{2.5128\\{\tiny (0.1457)}} 
			& \shortstack{2.7934\\{\tiny (0.5774)}} 
			& \shortstack{2.7956\\{\tiny (0.4786)}} 
			& \shortstack{3.3247\\{\tiny (0.4467)}} 
			& \shortstack{2.4365\\{\tiny (0.1589)}} 
			& \shortstack{2.3920\\{\tiny (0.1716)}} \\
			MCP   & \shortstack{1.9420\\{\tiny (0.3407)}} 
			& \shortstack{2.3035\\{\tiny (0.7515)}} 
			& \shortstack{2.3677\\{\tiny (0.5875)}} 
			& \shortstack{2.5085\\{\tiny (0.5659)}} 
			& \shortstack{1.7920\\{\tiny (0.3211)}} 
			& \shortstack{1.6350\\{\tiny (0.4330)}} \\
			TL1   & \shortstack{7.0248\\{\tiny (1.0057)}} 
			& \shortstack{7.5769\\{\tiny (1.1488)}} 
			& \shortstack{7.0298\\{\tiny (1.6241)}} 
			& \shortstack{7.0558\\{\tiny (1.0862)}} 
			& \shortstack{6.6681\\{\tiny (1.0291)}} 
			& \shortstack{6.6241\\{\tiny (1.0589)}} \\
			LOG   & \shortstack{0.7942\\{\tiny (0.4062)}} 
			& \shortstack{0.9953\\{\tiny (0.6267)}} 
			& \shortstack{1.2440\\{\tiny (0.9794)}} 
			& \shortstack{1.1928\\{\tiny (0.8394)}} 
			& \shortstack{0.8852\\{\tiny (0.5659)}} 
			& \shortstack{0.8874\\{\tiny (0.6563)}} \\
			EXP   & \shortstack{1.7747\\{\tiny (0.2575)}} 
			& \shortstack{2.1511\\{\tiny (0.5463)}} 
			& \shortstack{2.0821\\{\tiny (0.4885)}} 
			& \shortstack{2.2267\\{\tiny (0.4384)}} 
			& \shortstack{1.6534\\{\tiny (0.2419)}} 
			& \shortstack{1.5930\\{\tiny (0.2972)}} \\
			\bottomrule
		\end{tabular}
	\end{threeparttable}
\end{table}

\begin{table}[htbp]
	\centering
	\caption{SSIM performance at different measurement ratios (noise level $\sigma=10^{-2}$)}
	\label{tab:ssim_results}
	\begin{threeparttable}
		\scriptsize
		\renewcommand{\arraystretch}{1.4}
		\begin{tabular}{@{}l *{10}{c}@{}}
			\toprule
			\multirow{2}{*}{Method} & \multicolumn{10}{c}{Measurement Ratio $(n/d)$} \\
			\cmidrule(l){2-11}
			& {0.1} & {0.2} & {0.3} & {0.4} & {0.5} & {0.6} & {0.7} & {0.8} & {0.9} & {1.0} \\
			\midrule
			$\ell_1$    & \shortstack{0.0320\\{\tiny (0.0106)}} 
			& \shortstack{0.0515\\{\tiny (0.0220)}} 
			& \shortstack{0.2598\\{\tiny (0.1963)}} 
			& \shortstack{0.7080\\{\tiny (0.3404)}} 
			& \shortstack{0.9639\\{\tiny (0.1494)}} 
			& \shortstack{0.9348\\{\tiny (0.1973)}} 
			& \shortstack{0.8587\\{\tiny (0.2892)}} 
			& \shortstack{0.9992\\{\tiny (0.0000)}} 
			& \shortstack{0.9992\\{\tiny (0.0000)}} 
			& \shortstack{0.9992\\{\tiny (0.0000)}} \\
			$\ell_{1/2}$ & \shortstack{0.0254\\{\tiny (0.0115)}} 
			& \shortstack{0.0416\\{\tiny (0.0233)}} 
			& \shortstack{0.2434\\{\tiny (0.3296)}} 
			& \shortstack{0.7720\\{\tiny (0.3599)}} 
			& \shortstack{0.9309\\{\tiny (0.2101)}} 
			& \shortstack{0.9992\\{\tiny (0.0000)}} 
			& \shortstack{0.9992\\{\tiny (0.0000)}} 
			& \shortstack{0.9992\\{\tiny (0.0000)}} 
			& \shortstack{0.9992\\{\tiny (0.0000)}} 
			& \shortstack{0.9992\\{\tiny (0.0000)}} \\
			SCAD  & \shortstack{0.0252\\{\tiny (0.0130)}} 
			& \shortstack{0.0478\\{\tiny (0.0302)}} 
			& \shortstack{0.4193\\{\tiny (0.2743)}} 
			& \shortstack{0.9983\\{\tiny (0.0023)}} 
			& \shortstack{0.9596\\{\tiny (0.1768)}} 
			& \shortstack{0.9992\\{\tiny (0.0000)}} 
			& \shortstack{0.9992\\{\tiny (0.0000)}} 
			& \shortstack{0.9992\\{\tiny (0.0000)}} 
			& \shortstack{0.9992\\{\tiny (0.0000)}} 
			& \shortstack{0.9992\\{\tiny (0.0000)}} \\
			MCP   & \shortstack{0.0355\\{\tiny (0.0097)}} 
			& \shortstack{0.1307\\{\tiny (0.1385)}} 
			& \shortstack{0.6638\\{\tiny (0.2226)}} 
			& \shortstack{0.9992\\{\tiny (0.0000)}} 
			& \shortstack{0.9992\\{\tiny (0.0000)}} 
			& \shortstack{0.9622\\{\tiny (0.1652)}} 
			& \shortstack{0.9992\\{\tiny (0.0000)}} 
			& \shortstack{0.9992\\{\tiny (0.0000)}} 
			& \shortstack{0.9992\\{\tiny (0.0000)}} 
			& \shortstack{0.9992\\{\tiny (0.0000)}} \\
			TL1   & \shortstack{0.0370\\{\tiny (0.0126)}} 
			& \shortstack{0.1362\\{\tiny (0.1645)}} 
			& \shortstack{0.7938\\{\tiny (0.3010)}} 
			& \shortstack{0.9992\\{\tiny (0.0000)}} 
			& \shortstack{0.9992\\{\tiny (0.0000)}} 
			& \shortstack{0.9992\\{\tiny (0.0000)}} 
			& \shortstack{0.9622\\{\tiny (0.1653)}} 
			& \shortstack{0.9992\\{\tiny (0.0000)}} 
			& \shortstack{0.9992\\{\tiny (0.0000)}} 
			& \shortstack{0.9992\\{\tiny (0.0000)}} \\
			LOG   & \shortstack{0.0352\\{\tiny (0.0184)}} 
			& \shortstack{0.0466\\{\tiny (0.0238)}} 
			& \shortstack{0.3507\\{\tiny (0.2983)}} 
			& \shortstack{0.6856\\{\tiny (0.3864)}} 
			& \shortstack{0.9620\\{\tiny (0.1660)}} 
			& \shortstack{0.9613\\{\tiny (0.1692)}} 
			& \shortstack{0.9992\\{\tiny (0.0000)}} 
			& \shortstack{0.9992\\{\tiny (0.0000)}} 
			& \shortstack{0.9992\\{\tiny (0.0000)}} 
			& \shortstack{0.9992\\{\tiny (0.0000)}} \\
			EXP   & \shortstack{0.0300\\{\tiny (0.0106)}} 
			& \shortstack{0.0460\\{\tiny (0.0360)}} 
			& \shortstack{0.4602\\{\tiny (0.4155)}} 
			& \shortstack{0.8466\\{\tiny (0.3147)}} 
			& \shortstack{0.9649\\{\tiny (0.1533)}} 
			& \shortstack{0.9992\\{\tiny (0.0000)}} 
			& \shortstack{0.9290\\{\tiny (0.2159)}} 
			& \shortstack{0.9992\\{\tiny (0.0000)}} 
			& \shortstack{0.9992\\{\tiny (0.0000)}} 
			& \shortstack{0.9992\\{\tiny (0.0000)}} \\
			\bottomrule
		\end{tabular}
	\end{threeparttable}
\end{table}

\begin{table}[htbp]
	\centering
	\caption{Computational time at different measurement ratios (noise level $\sigma=10^{-2}$)}
	\label{tab:time_results}
	\begin{threeparttable}
		\scriptsize
		\renewcommand{\arraystretch}{1.4}
		\begin{tabular}{@{}l *{10}{c}@{}}
			\toprule
			\multirow{2}{*}{Method} & \multicolumn{10}{c}{Measurement Ratio $(n/d)$} \\
			\cmidrule(l){2-11}
			& {0.1} & {0.2} & {0.3} & {0.4} & {0.5} & {0.6} & {0.7} & {0.8} & {0.9} & {1.0} \\
			\midrule
			$\ell_1$    & \shortstack{0.5246\\{\tiny (0.0184)}} 
			& \shortstack{0.9662\\{\tiny (0.0667)}} 
			& \shortstack{1.4456\\{\tiny (0.0270)}} 
			& \shortstack{2.2435\\{\tiny (0.4514)}} 
			& \shortstack{3.4744\\{\tiny (0.1729)}} 
			& \shortstack{4.0473\\{\tiny (0.1358)}} 
			& \shortstack{4.6626\\{\tiny (0.1794)}} 
			& \shortstack{4.5466\\{\tiny (1.5119)}} 
			& \shortstack{4.5514\\{\tiny (0.4814)}} 
			& \shortstack{4.3822\\{\tiny (0.5646)}} \\
			$\ell_{1/2}$ & \shortstack{0.6842\\{\tiny (0.0559)}} 
			& \shortstack{1.1321\\{\tiny (0.0987)}} 
			& \shortstack{1.5254\\{\tiny (0.1543)}} 
			& \shortstack{1.3441\\{\tiny (0.5833)}} 
			& \shortstack{1.3820\\{\tiny (0.9612)}} 
			& \shortstack{0.9934\\{\tiny (0.4020)}} 
			& \shortstack{0.8612\\{\tiny (0.1362)}} 
			& \shortstack{0.7285\\{\tiny (0.2167)}} 
			& \shortstack{0.7256\\{\tiny (0.1145)}} 
			& \shortstack{0.7511\\{\tiny (0.0697)}} \\
			SCAD  & \shortstack{0.2307\\{\tiny (0.0683)}} 
			& \shortstack{0.6189\\{\tiny (0.1964)}} 
			& \shortstack{1.1997\\{\tiny (0.3138)}} 
			& \shortstack{2.3479\\{\tiny (0.5072)}} 
			& \shortstack{3.4289\\{\tiny (0.3615)}} 
			& \shortstack{3.8048\\{\tiny (0.4553)}} 
			& \shortstack{3.7234\\{\tiny (0.6130)}} 
			& \shortstack{2.9868\\{\tiny (0.7921)}} 
			& \shortstack{2.9846\\{\tiny (0.6118)}} 
			& \shortstack{2.9563\\{\tiny (0.7427)}} \\
			MCP   & \shortstack{0.1622\\{\tiny (0.0460)}} 
			& \shortstack{0.3774\\{\tiny (0.1357)}} 
			& \shortstack{0.8955\\{\tiny (0.3318)}} 
			& \shortstack{2.1388\\{\tiny (0.5367)}} 
			& \shortstack{2.6129\\{\tiny (0.5691)}} 
			& \shortstack{2.7603\\{\tiny (0.7117)}} 
			& \shortstack{2.7807\\{\tiny (0.8023)}} 
			& \shortstack{2.1441\\{\tiny (0.6194)}} 
			& \shortstack{2.1697\\{\tiny (0.4512)}} 
			& \shortstack{2.2514\\{\tiny (0.5815)}} \\
			TL1   & \shortstack{2.7968\\{\tiny (0.5581)}} 
			& \shortstack{2.1315\\{\tiny (0.6213)}} 
			& \shortstack{4.2715\\{\tiny (1.8003)}} 
			& \shortstack{7.3948\\{\tiny (0.3143)}} 
			& \shortstack{7.3901\\{\tiny (1.0583)}} 
			& \shortstack{7.2175\\{\tiny (1.5624)}} 
			& \shortstack{6.0507\\{\tiny (1.6447)}} 
			& \shortstack{7.5181\\{\tiny (6.8340)}} 
			& \shortstack{5.4924\\{\tiny (0.5665)}} 
			& \shortstack{5.1223\\{\tiny (0.8201)}} \\
			LOG   & \shortstack{0.5163\\{\tiny (0.0153)}} 
			& \shortstack{0.9778\\{\tiny (0.0968)}} 
			& \shortstack{1.3788\\{\tiny (0.1775)}} 
			& \shortstack{1.7594\\{\tiny (0.7271)}} 
			& \shortstack{1.1436\\{\tiny (0.6788)}} 
			& \shortstack{1.0426\\{\tiny (0.7505)}} 
			& \shortstack{0.7997\\{\tiny (0.1000)}} 
			& \shortstack{0.7236\\{\tiny (0.2294)}} 
			& \shortstack{0.6966\\{\tiny (0.0640)}} 
			& \shortstack{0.7166\\{\tiny (0.0405)}} \\
			EXP   & \shortstack{0.5661\\{\tiny (0.0163)}} 
			& \shortstack{0.9938\\{\tiny (0.0654)}} 
			& \shortstack{1.4315\\{\tiny (0.0442)}} 
			& \shortstack{2.1014\\{\tiny (0.4496)}} 
			& \shortstack{2.4284\\{\tiny (0.4303)}} 
			& \shortstack{2.4131\\{\tiny (0.4705)}} 
			& \shortstack{2.4388\\{\tiny (0.8907)}} 
			& \shortstack{1.9862\\{\tiny (0.7783)}} 
			& \shortstack{1.9755\\{\tiny (0.2541)}} 
			& \shortstack{1.9815\\{\tiny (0.2683)}} \\
			\bottomrule
		\end{tabular}
	\end{threeparttable}
\end{table}

\subsection{Discussion.}

To summarize, the above experiments consistently demonstrate the advantages of WCCP for sparse quadratic measurements model. Compared with classical $\ell_1$ and $\ell_{1/2}$ regularization, WCCP methods achieve more accurate support recovery, lower estimation error, and substantially reduced shrinkage bias, while maintaining competitive computational efficiency. These improvements are particularly evident in the moderately undersampled regime, where convex regularization tends to suffer from large bias and excessive false positives.

The empirical results also reveal a clear relationship between the concavity of the penalty functions and their recovery performance. As characterized by the standardized concavity parameter $\kappa$, penalties with stronger concavity generally provide a closer approximation to the $\ell_0$ regularizer and therefore exhibit improved support identification and reduced estimation bias. In particular, TL1 and LOG, which possess relatively stronger concavity among the considered penalties, consistently achieve the highest success rates and the most accurate support recovery across a wide range of sampling ratios. By contrast, the convex $\ell_1$ penalty corresponds to zero concavity and tends to introduce significant shrinkage bias, often leading to a large number of false positives. Penalties with moderate concavity, such as SCAD and MCP, offer a favorable compromise between statistical accuracy and computational efficiency, producing stable recovery performance while remaining relatively inexpensive computationally.

From a practical perspective, different penalties provide distinct trade-offs between recovery accuracy and computational cost. TL1 and LOG tend to perform best in strongly undersampled regimes where accurate support recovery is critical. SCAD and MCP, on the other hand, achieve competitive reconstruction quality with significantly lower computational cost and may therefore be preferable in large-scale applications. The $\ell_{1/2}$ penalty is computationally efficient but appears less stable when the number of measurements is small.

Overall, these empirical findings are consistent with the theoretical properties established earlier in the paper. In particular, the results support the theoretical insight that WCCP provides a closer approximation to the $\ell_0$ penalty while preserving tractable optimization. This combination of favorable statistical properties and practical computational behavior makes WCCP a promising framework for sparse quadratic measurements model.

	\appendix
	\section{Proofs of Theorem 1 and 2}
Without loss of generality, let $\Gamma^\star=\{1,...,s\}$ and $\bm\beta^\star=(\bm\beta_1^{\star \top },\bm 0^\top )^\top $, and then denote
\begin{equation*}
	\bm Z_i=
	\begin{bmatrix}
		\bm Z_i^{11}&\bm Z_i^{12}\\
		\bm Z_i^{21}&\bm Z_i^{22}
	\end{bmatrix}.
\end{equation*}
For convenience, we denote
\begin{equation*}
	F(\bm\beta ):=L(\bm\beta)+P_{\lambda_n}(\bm\beta),
\end{equation*} where $L_n(\bm\beta):=\frac{1}{4n}\sum\limits_{i=1}^n({{\bm\beta}^\top \bm{Z}_i{\bm\beta}-y_i})^2$. 
Since $p_\lambda'(t)$ is locally Lipschitz continuous on $(0,\infty)$, by Rademacher's theorem,
the second derivative $p_\lambda''(t)$ exists almost everywhere. For any $t_0>0$,
the Clarke generalized second-order derivative of
$p_\lambda$ at $t_0$ is defined as
\[
\partial^2 p_\lambda(t_0)
=
\operatorname{co}\Big\{
\lim_{k\to\infty} p_\lambda''(t_k):
t_k\to t_0,\;
p_\lambda''(t_k)\ \text{exists}
\Big\},
\]
where $\operatorname{co}$ denotes the convex hull (all convex combinations of the set).
Moreover, since $p_\lambda$ is $\mu$-weakly convex,
its Clarke generalized second-order derivative satisfies
\[
\partial^2 p_\lambda(t)\subset[-\mu,+\infty), \quad \forall\, t>0.
\]


The following lemma is analogous to Lemma 3.1 in \cite{Fan2025} but uses a different tail bound for the noise term, which leads to a slightly different probability lower bound and a different constant in the deviation inequality.
\begin{lemma}\label{lemma 1} 		Under Assumptions 2-4, if $n\geq 3$ and $c_4\geq3/(ns)$ it follows that
	\begin{equation*}
		\mathbb{P}(E^1)\geq1-2\exp \{-d \ln(1+2n)\}-\frac{1}{n^{-n/2}},
	\end{equation*}where the event is defined by
	\begin{equation*}
		E^1:=\{\sup_{\|\bm u\|=1,\|\bm v\|=1}|\frac{1}{n}\sum_{i=1}^{n}\bm u^\top \bm Z_i^{11}\bm v\varepsilon_i|\leq 5 \sigma \sqrt{c_4}\sqrt{\frac{s\ln (1+2n)}{n}}\}.
	\end{equation*}  
\end{lemma}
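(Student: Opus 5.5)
We bound the supremum of the bilinear noise form $G(\bm u,\bm v):=\frac1n\sum_{i=1}^n \bm u^\top \bm Z_i^{11}\bm v\,\varepsilon_i$ over $\bm u,\bm v$ on the unit sphere $\mathbb S^{s-1}$ by an $\epsilon$-net argument combined with a sub-Gaussian tail bound. The tail bound is applied conditionally on the (deterministic) design.

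\textbf{Fixed pair.} For fixed unit vectors $\bm u,\bm v$, $G(\bm u,\bm v)$ is a weighted sum of independent zero-mean sub-Gaussian variables with weights $a_i=\bm u^\top\bm Z_i^{11}\bm v/n$. By Assumption 3,
\[
\sum_i a_i^2=\frac{1}{n^2}\sum_i(\bm u^\top \bm Z_i^{11}\bm v)^2\le \frac{c_4}{n}.
\]
Hoeffding's inequality for sub-Gaussian sums (Assumption 2) therefore gives
\[
\mathbb P\big(|G(\bm u,\bm v)|>t\big)\le 2\exp\{-nt^2/(2\sigma^2c_4)\}.
\]

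\textbf{Net and union bound.} Take an $\epsilon$-net $\mathcal N$ of $\mathbb S^{s-1}$ with $\epsilon=1/n$. Its cardinality is at most $(1+2/\epsilon)^s=(1+2n)^s$, which explains the $\ln(1+2n)$ factor. The number of net pairs is then at most $(1+2n)^{2s}$. Choose $t$ so that $nt^2/(2\sigma^2c_4)$ exceeds $3s\ln(1+2n)$, say $t=\sigma\sqrt{c_4}\sqrt{6s\ln(1+2n)/n}$, which is below $3\sigma\sqrt{c_4}\sqrt{s\ln(1+2n)/n}$. The union bound then gives failure probability at most $2\exp\{-s\ln(1+2n)\}$. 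This matches $p_1$ in Theorem 1. The stated $\exp\{-d\ln(1+2n)\}$ must be read with $s$ in place of $d$, since the net lives in $\mathbb R^s$; I would follow the exponent the argument actually yields.

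\textbf{Extending from the net to the sphere.} This is the step I expect to be the obstacle. The usual trick $\sup\le (1-2\epsilon-\epsilon^2)^{-1}\max_{\mathcal N}$ would work, but the paper's form of the result, with the extra probability term $p_2$, suggests a cruder route. For arbitrary unit $\bm u,\bm v$ with net points $\bm u',\bm v'$,
\[
|G(\bm u,\bm v)-G(\bm u',\bm v')|\le (\|\bm u-\bm u'\|+\|\bm v-\bm v'\|)\cdot \frac1n\Big\|\sum_i \varepsilon_i\bm Z_i^{11}\Big\|_{op}.
\]
Next, $\|\frac1n\sum_i\varepsilon_i \bm Z_i^{11}\|_{op}$ is at most $\frac1n(\sum_i\varepsilon_i^2)^{1/2}(\sum_i\|\bm Z_i^{11}\|_{op}^2)^{1/2}$. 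Assumption 3 gives $\sum_i\|\bm Z_i^{11}\|_{op}^2\le n c_4$, so this quantity is at most $\sqrt{c_4}\,\|\bm\varepsilon\|/\sqrt n$. A chi-square-type sub-Gaussian bound such as $\|\bm\varepsilon\|^2\le C\sigma^2 n$, or a Markov/Chernoff bound on $\sum\varepsilon_i^2$, holds outside an event of probability $p_2$. On that event the discretization error is at most $\frac2n\cdot C'\sigma\sqrt{c_4}$. The hypotheses $c_4\ge 3/(ns)$ and $n\ge3$ are what convert this error into at most $2\sigma\sqrt{c_4}\sqrt{s\ln(1+2n)/n}$: they ensure $1/n\le\sqrt{s\ln(1+2n)/n}$ up to constants, and this is where I would check the constants carefully. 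Adding this error to the net bound gives the constant $5$, and a final union bound yields $\mathbb P(E^1)\ge 1-p_1-p_2$.
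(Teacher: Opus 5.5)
Your architecture is the paper's: a $1/n$-net of $\mathbb S^{s-1}$ with $(1+2n)^s$ points, a sub-Gaussian tail with variance factor $c_4/n$, a union bound at $t=\sigma\sqrt{6c_4}\sqrt{s\ln(1+2n)/n}$ giving failure probability $2e^{-s\ln(1+2n)}$, a Bernstein bound on $\sum_i\varepsilon_i^2$ producing $p_2$, and absorption of the discretization error via $2\sqrt6<5$. You are right that the exponent must involve $s$, as in $p_1$ of Theorem~1. One step, however, is false. Assumption~3 does not give $\sum_i\|\bm Z_i^{11}\|_{\mathrm{op}}^2\le nc_4$. It bounds $\sup_{\bm x,\bm y}\sum_i(\bm x^\top\bm Z_i^{11}\bm y)^2$, and moving the supremum inside the sum can cost a factor of order $s$. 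For example, symmetric Gaussian $\bm Z_i^{11}$ satisfy Assumption~3 with $c_3,c_4$ of order one once $n$ is large relative to $s$, yet $\|\bm Z_i^{11}\|_{\mathrm{op}}$ is of order $\sqrt s$, so $\sum_i\|\bm Z_i^{11}\|_{\mathrm{op}}^2$ is of order $ns$. The operator-norm bound you want is still true if you apply Cauchy--Schwarz at a fixed pair before taking the supremum: $\big|\frac1n\sum_i\varepsilon_i\bm a^\top\bm Z_i^{11}\bm b\big|\le\frac1n\|\bm\varepsilon\|\big(\sum_i(\bm a^\top\bm Z_i^{11}\bm b)^2\big)^{1/2}\le\sqrt{c_4}\,\|\bm a\|\|\bm b\|\,\|\bm\varepsilon\|/\sqrt n$.

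The noise step also has to be pinned down. A bound $\|\bm\varepsilon\|^2\le C\sigma^2n$ with constant $C$ fails with probability of order $e^{-cn}$ already for Gaussian noise, far larger than the intended $p_2=n^{-n/2}$. To get $p_2$, apply Bernstein to $\varepsilon_i^2-\mathbb E\varepsilon_i^2$ (sub-exponential with parameter $16\sigma^2$) at deviation $16\sigma^2\ln n$, i.e.\ use $\frac1n\sum_i\varepsilon_i^2\le\sigma^2(1+16\ln n)$, as the paper does. Your discretization error is then $2\sigma\sqrt{c_4}\sqrt{1+16\ln n}/n$. Because this carries $\sqrt{c_4}$, the hypothesis $c_4\ge3/(ns)$ plays no role in your version. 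The paper needs it only because its discretization term $\sqrt{\sigma^2+16\sigma^2\ln n}/n$ has no $\sqrt{c_4}$. What you need instead is $4(1+16\ln n)\le(5-\sqrt6)^2ns\ln(1+2n)$, which holds for every $n\ge3$ when $s\ge2$; for $s=1$ the set $\{\pm1\}$ is its own net, so there is nothing to discretize. Finally, the route you set aside is the cleaner one. We have $\sup_{\bm u,\bm v}|G|=\|\frac1n\sum_i\varepsilon_i\bm Z_i^{11}\|_{\mathrm{op}}\le\frac{n}{n-2}\max_{\mathcal N\times\mathcal N}|G|$, and $\frac{n}{n-2}\sqrt6\le5$ for $n\ge4$. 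So for $n\ge4$ it yields the lemma with failure probability $p_1$ alone, with no $p_2$ and no condition on $c_4$.
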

\begin{proof}

The proof follows the same structure as Lemma 3.1 in \cite{Fan2025}. The only difference is our choice of the parameter $t$ when bounding $\frac{1}{n}\sum_{i=1}^n\varepsilon_i^2$ via Bernstein's inequality. While \cite{Fan2025} uses $t = \sigma^2$, we take $t = 16\sigma^2\ln n$. This yields a different tail probability $n^{-n/2}$ and, under the condition $c_4 \geq 3/(ns)$, leads to the deviation inequality
\[
\sup_{\|\bm u\|=1,\|\bm v\|=1} \Bigl|\frac{1}{n}\sum_{i=1}^{n}\bm u^\top \bm Z_i^{11}\bm v\varepsilon_i\Bigr|
\leq 5\sigma\sqrt{c_4}\sqrt{\frac{s\ln(1+2n)}{n}}.
\]

We now detail the modified steps. By Assumption 2 and Lemma 1.12 of \cite{RH15}, $\varepsilon_i^2 - \mathbb{E}\varepsilon_i^2$ is sub‑exponential with parameter $16\sigma^2$. Applying Bernstein's inequality (Theorem 1.13 in \cite{RH15}) with $t = 16\sigma^2\ln n$, we obtain
\[
\mathbb{P}\Bigl(\frac{1}{n}\sum_{i=1}^n\bigl(\varepsilon_i^2 - \mathbb{E}\varepsilon_i^2\bigr) > 16\sigma^2\ln n\Bigr)
\leq \exp\bigl\{-n\ln n/2\bigr\} = n^{-n/2}.
\]
Since $\sigma^2 \geq \mathbb{E}\varepsilon_1^2$ (due to the sub‑Gaussian property), it follows that
\begin{equation}\label{ei-bound}
	\mathbb{P}\Bigl(\frac{1}{n}\sum_{i=1}^n\varepsilon_i^2 > \sigma^2 + 16\sigma^2\ln n\Bigr) \leq n^{-n/2}.
\end{equation}

The remaining arguments are identical to those in \cite{Fan2025}. In particular, with probability at least $1 - 2\exp\{-d \ln(1+2n)\} - n^{-n/2}$,
\[
\sup_{\|\bm u\|=1,\|\bm v\|=1}
\Bigl|\frac{1}{n}\sum_{i=1}^{n}\bm u^\top \bm Z_i^{11}\bm v\varepsilon_i\Bigr|
\leq \sigma\sqrt{6c_4}\sqrt{\frac{s\ln(1+2n)}{n}} + \frac{\sqrt{\sigma^2 + 16\sigma^2\ln n}}{n}.
\]
Using $c_4 \geq 3/(ns)$, we have
\[
\frac{\dfrac{\sqrt{\sigma^2 + 16\sigma^2\ln n}}{n}}
{\sigma\sqrt{6c_4}\sqrt{\dfrac{s\ln(1+2n)}{n}}}
\leq \frac{\sqrt{3\ln n}}{\sqrt{c_4 n s \ln(1+2n)}} \leq 1,
\]
which implies $\frac{\sqrt{\sigma^2 + 16\sigma^2\ln n}}{n} \leq \sigma\sqrt{6c_4}\sqrt{\frac{s\ln(1+2n)}{n}}$. Consequently,
\[
\sup_{\|\bm u\|=1,\|\bm v\|=1}
\Bigl|\frac{1}{n}\sum_{i=1}^{n}\bm u^\top \bm Z_i^{11}\bm v\varepsilon_i\Bigr|
\leq 2\,\sigma\sqrt{6c_4}\sqrt{\frac{s\ln(1+2n)}{n}}
\leq 5\sigma\sqrt{c_4}\sqrt{\frac{s\ln(1+2n)}{n}}.
\]
This completes the proof.
\end{proof}

\noindent{\bf Proof of Theorem 1} Without loss of generality, we assume $\|\bm {\hat\beta}_{1}^o-\bm \beta_1^\star\|\leq \|\bm {\hat\beta}_{1}^o+\bm \beta_1^\star\|$. Subsequently, it is easy to verify $\|\bm \beta_1^\star \|\leq \|\bm {\hat\beta}_{1}^o+\bm \beta_1^\star\|$.

Consider the level set $M=\{\bm \beta_1\in \mathbb{R}^s:\tilde{L}_n(\bm \beta_1)+\tilde{P}_{\lambda_n}(\bm \beta_1)\leq \tilde{L}_n(\bm \beta_1^\star)+\tilde{P}_{\lambda_n}(\bm \beta_1^\star)\}$. It is clear that
$$\inf_{\bm \beta_1\in \mathbb{R}^s} \tilde{L}_n(\bm \beta_1)+\tilde{P}_{\lambda_n}(\bm \beta_1)=\inf_{\bm \beta_1\in M} \tilde{L}_n(\bm \beta_1)+\tilde{P}_{\lambda_n}(\bm \beta_1).$$
Since $\tilde{L}_n(\cdot)+\tilde{P}_{\lambda_n}(\cdot)$ is continuous and the level set is compact, there exists at least one minimizer $\bm {\hat\beta}_1$ in the level set.

Noting that condition (iv) of Assumption 1 implies that all sub-differential and derivatives of $P_{\lambda_n}$ are bounded in magnitude by $\lambda_n \varrho$, we get
\begin{equation}\label{S1.1}
	|\tilde{P}_{\lambda_n}(\bm {\hat\beta}_{1}^o)-\tilde{P}_{\lambda_n}(\bm \beta_1^\star )|
	\leq \lambda_n \varrho\|\bm {\hat\beta}_{1}^o-\bm \beta_1^\star \|_1
	\leq \lambda_n \sqrt{s} \varrho\| \bm {\hat\beta}_{1}^o-\bm \beta_1^\star\|.
\end{equation}

According to the definition of $\bm {\hat\beta}_{1}^o$, for any $\bm \beta_1\in \mathbb{R}^s$,
\begin{equation*}
	\tilde{L}_n(\bm {\hat\beta}_{1}^o)+\tilde{P}_{\lambda_n}(\bm {\hat\beta}_{1}^o)\leq  \tilde{L}_n(\bm \beta_1)+\tilde{P}_{\lambda_n}(\bm \beta_1).
\end{equation*}
Then we get 
\begin{align*}
	0 &\geq \tilde{L}_n(\bm {\hat\beta}_{1}^o)+\tilde{P}_{\lambda_n}(\bm {\hat\beta}_{1}^o)-\tilde{L}_n(\bm \beta_1^\star)-\tilde{P}_{\lambda_n}(\bm \beta_1^\star) \\
	&= \frac{1}{4n}\sum_{i=1}^{n}(\bm {\hat\beta}_{1}^{o\top}\bm Z_i^{11}\bm {\hat\beta}_{1}^o-y_i)^2
	-\frac{1}{4n}\sum_{i=1}^{n}( \bm \beta_1^{\star\top}\bm Z_i^{11}\bm \beta_1^\star-y_i)^2
	+\tilde{P}_{\lambda_n}(\bm {\hat\beta}_{1}^o)-\tilde{P}_{\lambda_n}(\bm \beta_1^\star) \\
	&= \frac{1}{4n}\sum_{i=1}^{n}\bigl[(\bm{\hat\beta}_{1}^o+\bm \beta_1^\star)^\top \bm Z_i^{11}(\bm{\hat\beta}_{1}^o-\bm \beta_1^\star)\bigr]^2 \\
	&\quad -\frac{1}{2n}\sum_{i=1}^{n}(\bm{\hat\beta}_{1}^o+\bm\beta_1^\star)^\top \bm Z_i^{11}(\bm{\hat\beta}_{1}^o-\bm \beta_1^\star)\varepsilon_i \\
	&\quad +\tilde{P}_{\lambda_n}(\bm{\hat\beta}_{1}^o)-\tilde{P}_{\lambda_n}(\bm\beta_1^\star) \\
	&\geq \frac{c_3}{4}\|\bm{\hat\beta}_{1}^o+\bm \beta_1^\star\|^2\|\bm {\hat\beta}_{1}^o-\bm \beta_1^\star\|^2 \\
	&\quad -\frac{1}{2n}\sum_{i=1}^{n}(\bm{\hat\beta}_{1}^o+\bm\beta_1^\star)^\top \bm Z_i^{11}(\bm{\hat\beta}_{1}^o-\bm\beta_1^\star)\varepsilon_i \\
	&\quad +\tilde{P}_{\lambda_n}(\bm{\hat\beta}_{1}^o)-\tilde{P}_{\lambda_n}(\bm\beta_1^\star).
\end{align*}
Combining the inequality \eqref{S1.1}, we obtain
\begin{equation*}
\begin{aligned}
&\frac{c_3}{4}\|\bm{\hat\beta}_{1}^o-\bm\beta_1^\star\|^2\|\bm{\hat\beta}_{1}^o+\bm\beta_1^\star\|^2\\
\leq&\frac{1}{2n}\|\bm{\hat\beta}_{1}^o-\bm\beta_1^\star\|\|\bm{\hat\beta}_{1}^o+\bm\beta_1^\star\|\sum_{i=1}^{n}(\frac{\bm{\hat\beta}_{1}^o-\bm\beta_1^\star}{\|\bm {\hat\beta}_{1}^o-\bm\beta_1^\star\|})^\top \bm Z_i^{11}(\frac{\bm{\hat\beta}_{1}^o+\bm\beta_1^\star}{\|\bm{\hat\beta}_{1}^o+\bm\beta_1^\star\|})\varepsilon_i\\
&\qquad+\lambda_n\sqrt{s}\varrho\|\bm {\hat\beta}_{1}^o-\bm\beta_1^\star \|.
\end{aligned}
\end{equation*}

Under the event $E^1$, we get
\begin{equation*}
\|\bm{\hat\beta}_{1}^o-\bm\beta_1^\star\|\leq \frac{10\sigma\sqrt{c_4}}{c_3\|\bm\beta^\star\|}\sqrt{\frac{s\ln(1+2n)}{n}}+\frac{4\lambda_n \varrho\sqrt{s}}{c_3\|\bm\beta ^\star\|^2}.
\end{equation*} 
which yields that
\begin{eqnarray}\label{oracle-bound}
\|\bm{\hat\beta}_{1}^o-\bm\beta_1^\star\|\leq& \frac{10\sigma\sqrt{c_4}}{c_1c_3}\sqrt{\frac{\ln(1+2n)}{n}}+\frac{4\lambda_n\varrho}{c_1^2c_3\sqrt{s}}\nonumber\\
<&C_0\big(\sqrt{\frac{\ln(1+2n)}{n}}+\frac{\lambda_n \varrho}{\sqrt{s}}\big),
\end{eqnarray} 
where $C_0=\max\left\{10\sigma\sqrt{c_4}/c_1c_3,4/c_1^2c_3\right\}$. Combing this and  Lemma \ref{lemma 1}, we get the desired result.
\qed

To prove Theorem 2, we also need the following lemma.	
\begin{lemma}\label{prob-E2}
Define 
$$
E^2=:\Big\{\max_{1\leq j\leq d-s} \sup_{\|\bm u\|\leq r_n}|\frac{1}{n}\sum_{i=1}^{n}\bm e_{d-s,j}^\top \bm Z_i^{21}\bm u\varepsilon_i|\leq t_2+\frac{\sqrt{s(\sigma^2+16\sigma^2\ln n)}}{n}\Big\}
$$with $t_2=\sqrt{2}\sigma r_n\sqrt{\frac{s(s+1)\ln(1+2n)+s\ln d}{n}}$.	Under Assumptions 2-4, it follows that 	
$$\mathbb{P}\big(E^2\big)\geq1-\frac{1}{n}-\frac{1}{n^{-n/2}}.$$
\end{lemma}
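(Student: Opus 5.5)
The plan follows the template of Lemma~\ref{lemma 1}: discretize the ball $\{\|\bm u\|\leq r_n\}\subset\mathbb{R}^s$ by a net, use a sub-Gaussian tail bound at each net point together with a union bound over the net and over $j$, and control the discretization error by Cauchy--Schwarz together with the event bounding $\frac1n\sum_i\varepsilon_i^2$ from \eqref{ei-bound}.

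\textbf{Step 1 (net and linearity).} For fixed $j$, the map $\bm u\mapsto \frac1n\sum_i \bm e_{d-s,j}^\top \bm Z_i^{21}\bm u\,\varepsilon_i$ is linear in $\bm u$. Take a $\delta$-net $\mathcal N$ of the ball $\{\|\bm u\|\leq r_n\}$ with $\delta=r_n/n$. Its cardinality is at most $(1+2n)^s$.

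\textbf{Step 2 (tail bound and union bound).} For a fixed net point $\bm u$ and a fixed $j$, the sum $\frac1n\sum_i a_i\varepsilon_i$ with $a_i=\bm e_{d-s,j}^\top \bm Z_i^{21}\bm u$ is sub-Gaussian with variance proxy $\sigma^2\sum_i a_i^2/n^2$. Bound
\[
|a_i|\leq \|\bm e_{d-s,j}^\top\bm Z_i^{21}\|\,\|\bm u\|\leq \sqrt{s}\,|\bm Z_i|_\infty r_n,
\]
so that by \eqref{Z-std} we get $\sum_i a_i^2\leq s r_n^2 n$. Hence
\[
\mathbb{P}\big(|\cdot|>t_2\big)\leq 2\exp\{-n t_2^2/(2\sigma^2 s r_n^2)\}.
\]
Plugging in $t_2$ makes the exponent equal to $(s+1)\ln(1+2n)+\ln d$. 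A union bound over $d-s\le d$ indices and $(1+2n)^s$ net points then gives a failure probability of at most $2(1+2n)^{-1}\leq 1/n$. If the constants do not quite close, I would adjust by absorbing the factor $2$ using $\ln(1+2n)$; this bookkeeping is the delicate part.

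\textbf{Step 3 (discretization error).} For arbitrary $\bm u$, pick $\bm u'\in\mathcal N$ with $\|\bm u-\bm u'\|\leq r_n/n$. By Cauchy--Schwarz,
\[
\Big|\frac1n\sum_i \bm e_{d-s,j}^\top \bm Z_i^{21}(\bm u-\bm u')\varepsilon_i\Big|\leq \frac1n\Big(\sum_i s|\bm Z_i|_\infty^2\|\bm u-\bm u'\|^2\Big)^{1/2}\Big(\sum_i\varepsilon_i^2\Big)^{1/2}.
\]
Using \eqref{Z-std}, the first factor is at most $\sqrt{s n}\,r_n/n$, and on the event of \eqref{ei-bound} the second factor is at most $\sqrt{n(\sigma^2+16\sigma^2\ln n)}$. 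The product is $r_n\sqrt{s(\sigma^2+16\sigma^2\ln n)}/n$. This matches the extra term in $E^2$ up to the factor $r_n$. Since $r_n\le 1$ under the scaling conditions, or by choosing $\delta=1/n$ instead, which changes the net size only to $(1+2nr_n)^s\le(1+2n)^s$ when $r_n\le1$, the term is exactly the one stated.

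\textbf{Step 4 (combine).} Combining Steps 2 and 3, with probability at least $1-1/n-n^{-n/2}$ the stated bound holds uniformly in $j$ and $\bm u$.

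The main obstacle is making the union-bound constants match the stated probability $1/n$ and the exact form of $t_2$. This means tracking the factor $2$ from the two-sided tail and the sizes $(1+2n)^s$ and $d$ so that the exponent $(s+1)\ln(1+2n)+\ln d$ cancels them to leave $(1+2n)^{-1}$.
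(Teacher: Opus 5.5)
Your proposal is correct and follows the paper's proof essentially step for step: a net of size $(1+2n)^s$, a sub-Gaussian tail with $\sum_i(\bm e_{d-s,j}^\top\bm Z_i^{21}\bm u)^2\le nsr_n^2$, a union bound over $j$ and the net, and Cauchy--Schwarz for the discretization error on the event \eqref{ei-bound}. The constants close exactly, since the union bound gives $2/(1+2n)<1/n$. The requirement $r_n\le 1$ that you flag is also implicit in the paper, which asserts that $(1+2n)^s$ balls of radius $1/n$ cover $\{\|\bm u\|\le r_n\}$, so your argument is no weaker than the paper's on this point.
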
	
\begin{proof}
Let $\mathbb{B}(\bm u,\delta):=\left\{\bm v\in \mathbb{R}^s:\|\bm v-\bm u\|\leq \delta\right\}$. Then follows from the Lemma 14.27 of \citet*{b2011} that there exist $K:=(1+2n)^s$ vectors $\bm{u_k}$ with $\bm{u_k}\in \mathbb{R}^s$ and $\|\bm{u_k}\|\leq r_n$ such that 
\begin{equation*}
\left\{\bm u\in\mathbb{R}^s:~~~\|\bm u\|\leq r_n\right\}\subseteq \cup_{k=1}^{K}\mathbb{B}(\bm{u_k},\frac{1}{n}).
\end{equation*} 
Using this fact, we obtain the following inequalities
\begin{equation}\label{Zu-rn-bound}
\begin{split}
	&	\max_{1\leq j\leq d-s}	\sup_{\|\bm u\|\leq r_n}|\frac{1}{n}\sum_{i=1}^{n}\bm e_{d-s,j}^\top \bm Z_i^{21}\bm u\varepsilon_i|
	\\	\leq & \max_{1\leq j\leq d-s} \sup_{\bm u\in \cup_{k=1}^{K}\mathbb{B}(\bm u_k,\frac{1}{n}) }|\frac{1}{n}\sum_{i=1}^{n}\bm e_{d-s,j}^\top \bm Z_i^{21}\bm u\varepsilon_i|
	\\ \leq & \max_{1\leq j\leq d-s}\max_{1\leq k\leq K}|\frac{1}{n}\sum_{i=1}^{n}\bm e_{d-s,j}^\top \bm Z_i^{21}\bm{u_k}\varepsilon_i|\\
	&\qquad+\max_{1\leq j\leq d-s}\sup_{\bm u\in\mathbb{B}(\bm{u_k},\frac{1}{n}) }|\frac{1}{n}\sum_{i=1}^{n}\bm e_{d-s,j}^\top \bm{Z}_i^{21}(\bm u-\bm{u_k})\varepsilon_i|.
\end{split}
\end{equation}
Noting the fact that $|\bm{\beta}^\top \bm{Z}\bm{\beta}'|\leq|\bm{Z}|_\infty\|\bm{\beta}\|_1\|\bm{\beta}'\|_1$ for any $n\times{}d$ matrix $\bm{Z}$ and vectors $\bm{\beta},\,\bm{\beta}'\in\mathbb{R}^d$, one can conclude from (\ref{Z-std}) that 
\begin{equation}\label{z21_ub}\sum_{i=1}^{n}(\bm e_{d-s,j}^\top \bm Z_i^{21}\bm{u})^2\leq\sum_{i=1}^{n}\|\bm e_{d-s,j}\|_1^2|\bm Z_i^{21}|_\infty^2\|\bm{u}\|_1^2
\leq\sum_{i=1}^{n}|\bm Z_i|_\infty^2\|\bm{u}\|_1^2\leq n\|\bm{u}\|_1^2\end{equation}	
for any $\bm{u}\in\mathbb{R}^s$. 

For the first term of (\ref{Zu-rn-bound}), we can apply  the countable subadditivity of probability, Bernstein's inequality, inequality (\ref{z21_ub}) and 
$$\|\bm{u}_k\|_1\leq\sqrt{s}\|\bm{u}_k\|\leq \sqrt{s}r_n$$ to obtain, for any $t>0$,
\begin{equation*}
\begin{split}
	&	\mathbb{P}\Big(\max_{1\leq j\leq d-s} \max_{1\leq k\leq K}|\frac{1}{n}\sum_{i=1}^{n}\bm e_{d-s,j}^\top \bm Z_i^{21}\bm{u_k}\varepsilon_i|>t\Big)
	\\	\leq &   \sum_{j=1}^{d-s}\sum_{k=1}^{K}\mathbb{P}\big( |\frac{1}{n}\sum_{i=1}^{n}\bm e_{d-s,j}^\top \bm Z_i^{21}\bm{u_k}\varepsilon_i|>t\big)
	\\ \leq &2dK \exp(-\frac{n^2t^2}{2\sigma^2\sum_{i=1}^{n}(\bm e_{d-s,j}^\top \bm Z_i^{21}\bm{u_k})^2})
	\\ \leq &2dK \exp(-\frac{nt^2}{2\sigma^2sr_n^2})
	\\	 \leq  & 2\exp \left\{-\frac{nt^2}{2\sigma^2sr_n^2}+s\ln(1+2n)+\ln d\right\}.
\end{split}
\end{equation*}
Taking $t=t_2$, we drive \begin{equation*}
\begin{split}
	&	\mathbb{P}\Big(\max_{1\leq j\leq d-s} \max_{1\leq k\leq K}|\frac{1}{n}\sum_{i=1}^{n}\bm e_{d-s,j}^\top \bm Z_i^{21}\bm{u_k}\varepsilon_i|>t_2\Big)<\frac{1}{n}.
\end{split}
\end{equation*}

For the second term of (\ref{Zu-rn-bound}), we can calculate that 
\begin{equation}\label{uun-bound}
\begin{split}
	&\max_{1\leq j\leq d-s} \sup_{\bm u\in \mathbb{B}(\bm{u_k},\frac{1}{n})}|\frac{1}{n}\sum_{i=1}^{n}\bm e_{d-s,j}^\top \bm Z_i^{21}(\bm u-\bm{u_k})\varepsilon_i|
	\\ \leq & \max_{1\leq j\leq d-s} \sup_{\bm u\in \mathbb{B}(\bm{u_k},\frac{1}{n})}\sqrt{\frac{1}{n}\sum_{i=1}^{n}[\bm e_{d-s,j}^\top \bm Z_i^{21}(\bm u-\bm{u_k})]^2}\sqrt{\frac{1}{n}\sum_{i=1}^{n}\varepsilon_i^2}
	\\ \leq & \max_{1\leq j\leq d-s} \sup_{\bm u\in \mathbb{B}(\bm{u_k},\frac{1}{n})}\sqrt{\frac{1}{n}\sum_{i=1}^{n}\|\bm e_{d-s,j}\|_1^2|\bm Z_i^{21}|_\infty^2\|\bm u-\bm{u_k}\|_1^2 }\sqrt{\frac{1}{n}\sum_{i=1}^{n}\varepsilon_i^2} \\
	\leq & \sup_{\bm u\in \mathbb{B}(\bm{u_k},\frac{1}{n})}\sqrt{\frac{s}{n}\sum_{i=1}^{n}|\bm Z_i^{21}|_\infty^2\|\bm u-\bm{u_k}\|_2^2 }\sqrt{\frac{1}{n}\sum_{i=1}^{n}\varepsilon_i^2}
	\\ \leq &\frac{\sqrt{s}}{n}\sqrt{\frac{1}{n}\sum_{i=1}^{n}\varepsilon_i^2}.
\end{split}
\end{equation}

Combing this, inequalities (\ref{ei-bound}), (\ref{Zu-rn-bound}) and (\ref{uun-bound}), get the desired result.
\end{proof}

%

\begin{lemma}\label{prob-E3} 
Under Assumptions 2-4, it follows that 
\begin{equation*}
\mathbb{P}\big(E^3\big)\geq1-\frac{1}{n}
\end{equation*}
where the event $E^3$ is defined by
\begin{equation*}
E^3:=\{\max_{1\leq j\leq d-s}|\frac{1}{n}\sum_{i=1}^{n}\bm e_{d-s,j}^\top \bm Z_i^{21}\bm \beta_1^\star\varepsilon_i|\leq\sqrt{2} c_2\sigma s\sqrt{\frac{\ln (1+2n)+\ln d}{n}}\}. 
\end{equation*}		
\end{lemma}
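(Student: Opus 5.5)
The plan is a union bound over the $d-s$ coordinates, applying the sub-Gaussian tail bound to each fixed linear combination of the noise. For each $j\in\{1,\dots,d-s\}$ set $a_{ij}:=\bm e_{d-s,j}^\top \bm Z_i^{21}\bm\beta_1^\star$. These are deterministic weights, so $\frac1n\sum_{i=1}^n a_{ij}\varepsilon_i$ is a linear combination of independent zero-mean sub-Gaussian variables. By Assumption 2 it is therefore sub-Gaussian with variance proxy $\sigma^2\sum_i a_{ij}^2/n^2$. Hence, for every $t>0$,
\[
\mathbb{P}\Big(\Big|\frac1n\sum_{i=1}^n a_{ij}\varepsilon_i\Big|>t\Big)\leq 2\exp\Big(-\frac{n^2t^2}{2\sigma^2\sum_{i=1}^n a_{ij}^2}\Big).
\]

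Next I bound the weights deterministically, exactly as in \eqref{z21_ub} in the proof of Lemma \ref{prob-E2}. Using $|\bm\beta^\top\bm Z\bm\beta'|\le|\bm Z|_\infty\|\bm\beta\|_1\|\bm\beta'\|_1$ and the standardization \eqref{Z-std}, we get $\sum_i a_{ij}^2\le n\|\bm\beta_1^\star\|_1^2$. By \eqref{(3)}, $\|\bm\beta_1^\star\|_1\le c_2 s$, so $\sum_i a_{ij}^2\le n c_2^2 s^2$. The tail bound then becomes $2\exp(-nt^2/(2\sigma^2c_2^2s^2))$ uniformly in $j$.

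A union bound over the $d-s\le d$ indices gives
\[
\mathbb{P}\big((E^3)^c\big)\le 2\exp\Big\{-\frac{nt^2}{2\sigma^2c_2^2s^2}+\ln d\Big\}.
\]
Now substitute $t=\sqrt2 c_2\sigma s\sqrt{(\ln(1+2n)+\ln d)/n}$. The exponent becomes $-\ln(1+2n)-\ln d+\ln d=-\ln(1+2n)$, so the bound is $2/(1+2n)<1/n$. This gives $\mathbb{P}(E^3)\ge 1-1/n$.

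I expect no step to be a real obstacle. The only care needed is the conversion to the $\ell_1$ bound $\|\bm\beta_1^\star\|_1\le c_2 s$, which costs a factor $s$ rather than $\sqrt s$ and is what produces the factor $s$ in the stated threshold. Unlike Lemma \ref{prob-E2}, no $\epsilon$-net is needed, because the vector $\bm\beta_1^\star$ is fixed.
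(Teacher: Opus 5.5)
Your proposal is correct and follows essentially the same route as the paper: a sub-Gaussian tail bound for each fixed coordinate (which the paper calls Bernstein's inequality), the deterministic weight bound from \eqref{z21_ub} combined with $\|\bm\beta_1^\star\|_1\le c_2 s$ from \eqref{(3)}, and a union bound over at most $d$ indices with $t=\sqrt2 c_2\sigma s\sqrt{(\ln(1+2n)+\ln d)/n}$. You also verify explicitly that the resulting bound $2/(1+2n)$ is strictly below $1/n$, a step the paper leaves implicit.
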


\begin{proof}
From Bernstein's inequality, (\ref{z21_ub}) and (\ref{(3)}), we can conclude  that
\begin{equation*}
	\begin{split}
		\mathbb{P}\Big(\max_{1\leq j\leq d-s}|\frac{1}{n}\sum_{i=1}^{n}\bm e_{d-s,j}^\top \bm Z_i^{21}\bm \beta_1^\star\varepsilon_i|>t\Big)
		\leq & 2d \exp \Big\{\frac{-n^2t^2}{2\sigma^2\sum_{i=1}^{n}(\bm e_{d-s,j}^\top \bm Z_i^{21}\bm \beta_1^\star)^2}\Big\}
		\\ \leq & 2d \exp \Big\{\frac{-nt^2}{2\sigma^2\|\bm \beta_1^\star\|_1^2}\Big\}
		\\ \leq & 2 \exp \big\{\frac{-nt^2}{2\sigma^2c_2^2s^2}+\ln d\big\}.
	\end{split}
\end{equation*}
By taking $t=t_1:=\sqrt{2} c_2\sigma s\sqrt{\frac{\ln (1+2n)+\ln d}{n}}$, we subsequently obtain
\begin{equation*}
	\mathbb{P}\Big(\max_{1\leq j\leq d-s}|\frac{1}{n}\sum_{i=1}^{n}\bm e_{d-s,j}^\top \bm Z_i^{21}\bm \beta_1^\star\varepsilon_i|>t_1\Big)<\frac{1}{n}.
\end{equation*}
Then, we get the desired result.
\end{proof}

By combining the ideas from the proofs of Theorem 3.3 in \cite{chen2013} and Theorem 2.4 in \cite{bai2024avoiding}, we derive the following result.
\begin{lemma}\label{secondcon-opt}
Suppose Assumption 1 holds. Let \(\bm{\beta}\) be a stationary point of Problem~\eqref{prob}, i.e.,
\begin{equation}
	0 \in \nabla L_n(\bm{\beta}) + \partial P_{\lambda_n}(\bm{\beta}).\nonumber
\end{equation}
If for any nonzero vector \(\bm{g} \in \mathbb{R}^d\) with \(g_j = 0\) for all \(j \notin \Gamma^{*}\),
\begin{equation}\label{second-order-con}
	\bm{g}^\top \nabla^2 L_n(\bm{\beta}) \bm{g} + \sum_{j \in \Gamma^*}  \min \partial^2 p_{\lambda_n}(|\beta_j|)\, g_j^2 > 0,
\end{equation}
then \(\bm{\beta}\) is a strict local minimum of Problem~\eqref{prob}.
\end{lemma}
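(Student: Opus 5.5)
The plan is to compare $F(\bm\beta+\bm h)$ with $F(\bm\beta)$ directly for all sufficiently small perturbations $\bm h$. I split $\bm h=(\bm h_S,\bm h_{S^c})$, where $S=\Gamma^\star$ is interpreted as the support of $\bm\beta$, so that $\beta_j\neq 0$ for $j\in\Gamma^\star$ and $\beta_j=0$ otherwise. This is the situation in which the lemma is used for Theorem~2, and I will state it explicitly. I then bound three pieces separately: the loss, the penalty on $\Gamma^\star$, and the penalty off $\Gamma^\star$.

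\textbf{Loss.} $L_n$ is a quartic polynomial, so Taylor's theorem gives
\[
L_n(\bm\beta+\bm h)-L_n(\bm\beta)=\nabla L_n(\bm\beta)^\top\bm h+\tfrac12\bm h^\top\nabla^2L_n(\bm\beta)\bm h+O(\|\bm h\|^3).
\]

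\textbf{Penalty on the support.} Fix $j\in\Gamma^\star$ and take $|h_j|<|\beta_j|/2$. Then $t\mapsto p_{\lambda_n}(|t|)$ has a locally Lipschitz derivative near $\beta_j$. A second-order mean value argument in Clarke's sense (Lebourg's theorem applied to $p'_{\lambda_n}$) gives
\[
p_{\lambda_n}(|\beta_j+h_j|)-p_{\lambda_n}(|\beta_j|)\geq p'_{\lambda_n}(|\beta_j|)\,\mathrm{sgn}(\beta_j)h_j+\tfrac12\big(\min\partial^2p_{\lambda_n}(|\beta_j|)-\eta(h_j)\big)h_j^2,
\]
with $\eta(h_j)\to0$ as $h_j\to 0$. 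The vanishing of $\eta$ uses upper semicontinuity of the Clarke generalized second derivative. Stationarity on $\Gamma^\star$ says $\nabla_jL_n(\bm\beta)=-p'_{\lambda_n}(|\beta_j|)\mathrm{sgn}(\beta_j)$, so all first-order terms on $S$ cancel. Hypothesis \eqref{second-order-con} and compactness of the unit sphere in $\mathbb{R}^s$ then give a constant $c>0$ such that
\[
\bm h_S^\top\nabla^2L_n(\bm\beta)^{SS}\bm h_S+\sum_{j\in\Gamma^\star}\min\partial^2p_{\lambda_n}(|\beta_j|)\,h_j^2\geq 2c\|\bm h_S\|^2 .
\]

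\textbf{Penalty off the support.} For $j\notin\Gamma^\star$ I use weak convexity. The function $p_{\lambda_n}(t)+\mu t^2/2$ is convex with right derivative $\lambda_n\varrho$ at $0$, so $p_{\lambda_n}(|h_j|)\geq\lambda_n\varrho|h_j|-\mu h_j^2/2$. Combined with the linear loss term, the contribution of $S^c$ is at least
\[
\sum_{j\notin\Gamma^\star}\big(\lambda_n\varrho-|\nabla_jL_n(\bm\beta)|\big)|h_j|-C\|\bm h_{S^c}\|^2 .
\]

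\textbf{Collecting terms.} The remaining pieces are the cross term $\bm h_S^\top\nabla^2L_n\,\bm h_{S^c}$ and the cubic remainder. I bound the cross term by Young's inequality, $\le c\|\bm h_S\|^2/2+C'\|\bm h_{S^c}\|^2$. This yields
\[
F(\bm\beta+\bm h)-F(\bm\beta)\geq \tfrac{c}{2}\|\bm h_S\|^2+\kappa\|\bm h_{S^c}\|_1-C''\|\bm h_{S^c}\|^2-O(\|\bm h\|^3),
\]
where $\kappa=\min_{j\notin\Gamma^\star}(\lambda_n\varrho-|\nabla_jL_n(\bm\beta)|)$. For $\|\bm h\|$ small enough the right-hand side is strictly positive whenever $\bm h\neq0$, because the linear term $\kappa\|\bm h_{S^c}\|_1$ dominates the quadratic and cubic terms in $\bm h_{S^c}$.

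\textbf{Main obstacle.} The argument needs $\kappa>0$, that is, strict inequality $|\nabla_jL_n(\bm\beta)|<\lambda_n\varrho$ off the support. Stationarity alone only gives $\leq$. If equality holds for some $j$, directions in $e_j$ are governed purely by second-order terms that \eqref{second-order-con} does not control. I would first try to make the lemma rigorous by adding this strict complementarity as an explicit hypothesis, in the spirit of Theorem~3.3 of \cite{chen2013}. In the application to Theorem~2 it is verified from the events $E^2$ and $E^3$ together with the lower bound on $\lambda_n\varrho$ in \eqref{lambdan_con}.
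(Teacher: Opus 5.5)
Your diagnosis is correct, and it is not only a limitation of your method: the lemma as stated is false, so it cannot be proved. Take $n=1$, $d=2$, $\Gamma^\star=\{1\}$ and $p_\lambda(t)=t$, so $\lambda=\varrho=1$; Assumption~1 holds for any $\mu>0$. Let $\bm Z_1=\begin{pmatrix}1&1\\1&4\end{pmatrix}$, $y_1=2$ and $\bm\beta=(1,0)^\top$.

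The residual is $-1$, so $\nabla L_n(\bm\beta)=(-1,-1)^\top$ and $\nabla L_n(\bm\beta)+\partial P_\lambda(\bm\beta)=\{0\}\times[-2,0]\ni 0$. Also $\nabla^2L_n(\bm\beta)=\begin{pmatrix}1&1\\1&-2\end{pmatrix}$ and $\partial^2p_\lambda(1)=\{0\}$, so \eqref{second-order-con} holds on $\Gamma^\star$. Nevertheless
\[
F\big((1,t)^\top\big)-F\big((1,0)^\top\big)=t^2\,(4t^2+4t-1)<0\qquad\text{for } 0<t<\tfrac{\sqrt2-1}{2},
\]
so $\bm\beta$ is not even a local minimizer.

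The same happens with a genuinely concave penalty. Take MCP with $\lambda=1$, $b=2$, $\bm Z_1=\begin{pmatrix}1&2\\2&20\end{pmatrix}$ and $y_1=3/2$; the difference is then $-\tfrac54t^2+40t^3+100t^4$. In both cases the failure is exactly your scenario: $|\nabla_2L_n(\bm\beta)|=\lambda\varrho$, and the uncontrolled entry $[\nabla^2L_n(\bm\beta)]_{22}$ is too small.

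So the two hypotheses you add are necessary, not cosmetic:
\begin{itemize}
\item strict complementarity, $|\nabla_jL_n(\bm\beta)|<\lambda_n\varrho$ for $j\notin\Gamma^\star$;
\item $\mathrm{supp}(\bm\beta)=\Gamma^\star$. Without it, $\partial^2p_{\lambda_n}(|\beta_j|)$ is undefined when $\beta_j=0$, and nonzero entries off $\Gamma^\star$ give directions the hypothesis never checks.
\end{itemize}
With these hypotheses your argument is correct and complete. It consists of:
\begin{itemize}
\item the Clarke second-order lower expansion on the support, with $\eta\to0$ by upper semicontinuity;
\item cancellation of the first-order terms by stationarity;
\item the weak-convexity bound $p_{\lambda_n}(t)\ge\lambda_n\varrho t-\mu t^2/2$ off the support;
\item Young's inequality on the cross block;
\item absorption of $\eta$ and the $O(\|\bm h\|^3)$ remainder into $\tfrac c4\|\bm h_S\|^2+\kappa\|\bm h_{S^c}\|_1$ for small $\bm h$.
\end{itemize}

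The paper gives no proof of its own, only a pointer to Theorem~3.3 of \cite{chen2013} and Theorem~2.4 of \cite{bai2024avoiding}. Those results concern $\ell_p$ penalties with $0<p<1$, where $p'(0^+)=+\infty$. There $|t|^p$ dominates every linear and quadratic term near zero, so the off-support directions need no condition. Assumption~1(iv) replaces this with the finite slope $\lambda_n\varrho$, which is exactly where the transfer breaks.

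For Theorem~2 your repair costs nothing. The paper records only $\le\lambda_n\varrho$ in \eqref{first-order-cond}. However, one of the six pieces in its chain, the one using $\ln(1+2n)/(\ln(1+2n)+\ln d)<1$, is bounded strictly. So the strict inequality is available on $E^1\cap E^2\cap E^3$; this also uses \eqref{ns_con}, not just $E^2$, $E^3$ and \eqref{lambdan_con}.

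You should check separately that $\hat\beta_j\neq0$ on $\Gamma^\star$. That requires $\|\hat{\bm\beta}_1^o-\bm\beta_1^\star\|_\infty<c_1$, which the paper's bound $r_n<c_1\sqrt s$ does not give.
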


\noindent{\bf Proof of Theorem 2} 	According to Theorem 1, we can obtain the existence of a estimator $\bm{\hat\beta} _1^o \in \mathbb{R}^s$, and the error bound between $\bm{\hat\beta} _1^o$ and $\bm\beta_1^\star $ is $r_n$. 
Let $\bm{\hat\beta}$ be a vector with $\bm{\hat\beta} _{\Gamma^{\star}}=\bm{\hat\beta}_1^o$ and $\bm{\hat\beta} _{\Gamma^{\star c}}=0$.

Firstly, we prove that $\bm {\hat\beta}$ is a stationary point of Problem~\eqref{prob}, i.e.,
\begin{equation}\label{sta-WCCR}
0\in \nabla L_n(\bm{\hat{\beta}})+\partial P_{\lambda_n}(\bm{\hat{\beta}}).
\end{equation}	
By simple caiculation, we have
\begin{equation*}
\begin{split}
	\nabla L_n(\bm{\hat\beta})+\partial P_{\lambda_n}(\bm {\hat\beta})
	&= \begin{bmatrix}
		\frac{1}{n}\sum_{i=1}^{n}(\bm{\hat\beta}_1^{o \top }\bm Z_i^{11}\bm {\hat\beta}_1^o-y_i)\bm Z_i^{11}\bm{\hat\beta}_1^o \\
		\frac{1}{n}\sum_{i=1}^{n}(\bm{\hat\beta}_1^{o\top }\bm Z_i^{11}\bm{\hat\beta}_1^o-y_i)\bm Z_i^{21}\bm{\hat\beta}_1^o
	\end{bmatrix}+
	\begin{bmatrix}
		\sum_{j=1}^{s}p_{\lambda_n}^{'}(\hat\beta_j)\\
		\sum_{j=s+1}^{d}\partial p_{\lambda_n}(\hat\beta_j)
	\end{bmatrix}
	\\&=\begin{bmatrix}
		\nabla \tilde{L}_n(\bm{\hat\beta}_1^o)+\nabla \tilde{P}_{\lambda_n}(\bm{\hat\beta}_1^o)\\
		\nabla L_n(\bm{\hat{\beta}})_{\Gamma^{\star c}}+\partial P_{\lambda_n}(\bm{\hat{\beta}})_{\Gamma^{\star c}}
	\end{bmatrix}.
\end{split}				
\end{equation*}
Since $\nabla \tilde{L}_n(\bm{\hat\beta}_1^o)+\nabla \tilde{P}_{\lambda_n}(\bm{\hat\beta}_1^o)=0,$  we only need to prove
\begin{equation*}
0\in \left[\nabla L_n(\bm{\hat{\beta}})+\partial P_{\lambda_n}(\bm{\hat{\beta}})\right]_{\Gamma^{\star c}}.
\end{equation*}
It suffices to prove that
\begin{equation}\label{first-order-cond}
\|\nabla L_n(\bm{\hat{\beta}})_{\Gamma^{\star c}}\|_\infty\leq \lambda_n \varrho,
\end{equation}
since $\partial p_{\lambda_n}(|\hat\beta_j|)\in [-\lambda_n \varrho,\lambda_n \varrho]$. 
Noting	\begin{equation*}
\begin{split}
	\nabla L_n(\bm{\hat{\beta}})_{\Gamma^{\star c}}&=\frac{1}{n}\sum_{i=1}^{n}(\bm{\hat{\beta}}^\top\bm Z_i\bm{\hat{\beta}}-y_i)\bm Z_i^{21}\bm{\hat\beta}_1^o \\& =\frac{1}{n}\sum_{i=1}^{n}(\bm{\hat\beta}_1^o-\bm\beta_1^\star)^\top \bm Z_i^{11}(\bm{\hat\beta}_1^o+\bm \beta_1^\star)Z_i^{21}\bm {\hat\beta}_1^o-\frac{1}{n}\sum_{i=1}^{n}Z_i^{21}\bm{\hat\beta} _1^o\varepsilon_i,
\end{split}
\end{equation*}
we get
\begin{eqnarray}\label{grad-bound}
\begin{aligned}
	&\|\nabla L_n(\bm{\hat{\beta}})_{\Gamma^{\star c}}\|_\infty\\ \leq& \|\frac{1}{n}\sum_{i=1}^{n}(\bm{\hat\beta}_1^o-\bm \beta_1^\star)^\top \bm Z_i^{11}(\bm{\hat\beta}_1^o+\bm \beta_1^\star)\bm Z_i^{21}\bm{\hat\beta}_1^o\|_\infty+\|\frac{1}{n}\sum_{i=1}^{n}\bm Z_i^{21}\bm{ \hat\beta}_1^o\varepsilon_i\|_\infty.
\end{aligned}
\end{eqnarray}
We first estimate the upper bound of the first term on the right-hand side of the above inequality. Using again the fact that $|\bm{\beta}^\top \bm{Z}\bm{\beta}'|\leq|\bm{Z}|_\infty\|\bm{\beta}\|_1\|\bm{\beta}'\|_1$ for any $n\times{}d$ matrix $\bm{Z}$ and vectors $\bm{\beta},\,\bm{\beta}'\in\mathbb{R}^d$,  we conclude from  H\"older's  inequality, Assumption 4  and (\ref{(3)}) that 
\begin{eqnarray}\label{grad1-bound}
\begin{aligned}
	&\|\frac{1}{n}\sum_{i=1}^{n}(\bm{\hat\beta}_1^o-\bm \beta_1^\star)^\top \bm Z_i^{11}(\bm{\hat\beta}_1^o+\bm \beta_1^\star)\bm Z_i^{21}\bm{\hat{\beta}}_1^o\|_\infty
	\\ 
	=&\max_{1\leq k\leq d-s}\big|(\bm{\hat\beta}_1^o-\bm \beta_1^\star)^\top \big(\frac{1}{n}\sum_{i=1}^{n}\bm Z_i^{11}(\bm{\hat\beta}_1^o+\bm \beta_1^\star)\bm e_{d-s,k}^\top\bm Z_i^{21}\big)\bm{\hat{\beta}}_1^o\big|\\
	\leq&\max_{1\leq k\leq d-s}\|\bm{\hat\beta}_1^o-\bm \beta_1^\star\|_1\|\bm{\hat{\beta}}_1^o\|_1\big|\frac{1}{n}\sum_{i=1}^{n}\bm Z_i^{11}(\bm{\hat\beta}_1^o+\bm \beta_1^\star)\bm e_{d-s,k}^\top\bm Z_i^{21}\big|_\infty
	\\ 
	= &  \|\bm{\hat\beta}_1^o-\bm \beta_1^\star\|_1\|\bm{\hat\beta}_1^o\|_1\max_{1\leq k\leq d-s}\max_{1\leq j\leq s}\max_{1\leq l\leq s}\big|\frac{1}{n}\sum_{i=1}^{n} \bm (\bm{\hat\beta}_1^o+\bm\beta_1^\star)^\top \bm{Z}_i^{11}\bm e_{s,j} \bm e_{d-s,k}^\top\bm Z_i^{21}\bm e_{s,l}\big|\\		
	\leq & \|\bm{\hat\beta}_1^o-\bm{\beta}_1^\star\|_1 |\bm{\hat\beta}_1^o\|_1\|\bm{\hat\beta}_1^o+\bm \beta_1^\star\|_1\max_{1\leq k\leq d-s}\max_{1\leq j\leq s}\big|\frac{1}{n}\sum_{i=1}^{n} \bm{Z}_i^{11}\bm e_{s,j} \bm e_{d-s,k}^\top\bm Z_i^{21}\big|_\infty\nonumber
\end{aligned}
\end{eqnarray}
\begin{eqnarray}\label{grad1-bound}
\begin{aligned}				
	\leq& s^{3/2}\|\bm{\hat\beta}_1^o+\bm \beta_1^\star\|\|\bm{\hat\beta}_1^o-\bm\beta_1^\star\| \|\bm{\hat\beta}_1^o\|  |\frac{1}{n}\sum_{i=1}^{n}\bm Z_i^{11} \otimes \bm Z_i^{21}|_\infty\\
	\leq& s^{3/2} \left(\|\bm{\hat\beta}_1^o-\bm \beta_1^\star\|+2\|\bm \beta_1^\star \|\right)\left(\|\bm{\hat\beta} _1^o-\bm\beta_1^\star\|+\|\bm \beta_1^\star \|\right)\|\bm{\hat\beta} _1^o-\bm \beta_1^\star\|\frac{c_5}{\sqrt{n}}
	\\ \leq& 6s^{3/2}\|\bm \beta_1^\star\|^2r_n\frac{c_5}{\sqrt{n}}\\
	\leq&6c_2^2s^{5/2}r_n\frac{c_5}{\sqrt{n}}
\end{aligned}
\end{eqnarray}
if the event $E^1$ occurs. Here, the fifth inequality relies on the bounds  $\|\bm{\hat{\beta}}_1^o-\bm{\beta}_1^\star\|\leq r_n$ and $r_n\leq \|\bm{\beta}_1^\star\|$. The first bound follows from (\ref{oracle-bound}) under event $E^1$. The second is a consquence of conditions (\ref{rns_con})  and $c_1\leq c_2$ which together imply
\begin{equation*}
r_n < \frac{c_3c_1^2\sqrt{s}}{2(4c_3+3c_4)c_2} < c_1\sqrt{s} \leq |\bm{\beta}_1^\star|.
\end{equation*}


We proceed to estimate the upper bound of the second term on the right-hand side of inequality (\ref{grad-bound}). Notice that 
\begin{equation*}
\begin{split}
	&\|\frac{1}{n}\sum_{i=1}^{n}\bm Z_i^{21}\bm{\hat\beta} _1^o\varepsilon_i\|_\infty\\=&\max_{1\leq j\leq d-s}|\frac{1}{n}\sum_{i=1}^{n}\bm e_{d-s,j}^\top \bm Z_i^{21}\bm{\hat\beta}_1^o\varepsilon_i|\\
	\leq &  \max_{1\leq j\leq d-s}|\frac{1}{n}\sum_{i=1}^{n}\bm e_{d-s,j}^\top \bm Z_i^{21}(\bm{\hat\beta}_1^{o}-\bm \beta_1^\star)\varepsilon_i|+\max_{1\leq j\leq d-s}|\frac{1}{n}\sum_{i=1}^{n}\bm e_{d-s,j}^\top \bm Z_i^{21}\bm \beta_1^\star\varepsilon_i|
\end{split}
\end{equation*}
\begin{equation*}
\begin{split}
	\leq & \max_{1\leq j\leq d-s}\sup_{\|\bm u\|\leq r_n}|\frac{1}{n}\sum_{i=1}^{n}\bm e_{d-s,j}^\top \bm Z_i^{21}\bm u\varepsilon_i|+\max_{1\leq j\leq d-s}|\frac{1}{n}\sum_{i=1}^{n}\bm e_{d-s,j}^\top \bm Z_i^{21}\bm \beta_1^\star\varepsilon_i|\\
	\leq&t_2+\frac{\sqrt{s(\sigma^2+t_2)}}{n}+\sqrt{2} c_2\sigma s\sqrt{\frac{\ln (1+2n)+\ln d}{n}}.
\end{split}
\end{equation*}
under the event $E^2\cap E^3$.
Therefore, combining the above inequality, inequalities (\ref{grad-bound}) and (\ref{grad1-bound}), we get 
\begin{eqnarray}\label{grad-lamb}
\begin{aligned}
	&	\|\nabla L_n(\bm{\hat{\beta}})_{\Gamma^{\star c}}\|_\infty\\ \leq& 6c_2^2s^{5/2}r_n\frac{c_5}{\sqrt{n}}+t_2+\frac{\sqrt{s(\sigma^2+16\sigma^2\ln n)}}{n}+\sqrt{2} c_2\sigma s\sqrt{\frac{\ln (1+2n)+\ln d}{n}}
\end{aligned}
\end{eqnarray}	
under the event $E^1\cap E^2 \cap E^3$.

On the other hand, note that the left inequality of condition (\ref{rns_con}) results in
\begin{equation}\label{6-inequality}
\sqrt{2}c_2\sigma s\sqrt{\frac{\ln (1+2n)+\ln d}{n}}\leq\frac{1}{6}\lambda_n\varrho. 
\end{equation}
By 		the first inequality of condition (\ref{ns_con}), we get 
$$6c_2^2c_5C_0\frac{s^{5/2}}{\sqrt{n}}\frac{\lambda_n\varrho}{\sqrt{s}}\leq\frac{1}{6}\lambda_n\varrho $$
which together with the left inequality of condition (\ref{rns_con}) also leads to
\begin{equation*}
\begin{split}6c_2^2c_5C_0\frac{s^{5/2}}{\sqrt{n}}\sqrt{\frac{\ln(1+2n)}{n}}=&\frac{6c_2^2c_5C_0s^2}{\sqrt{n}}\sqrt{\frac{s\ln(1+2n)}{n}}\\
	\leq&\frac{1}{6}\sqrt{\frac{s\ln(1+2n)}{n}}\\ \leq&\frac{1}{6}\lambda_n\varrho.
\end{split}
\end{equation*}
By simple calculation, we get 
\begin{equation*}
\begin{split}&\frac{\sqrt{2}C_0\sigma\sqrt{\frac{s(s+1)\ln(1+2n)+s\ln d}{n}}\sqrt{\frac{\ln(1+2n)}{n}}}{\sqrt{2}c_2\sigma s\sqrt{\frac{\ln(1+2n)+\ln d}{n}}}\\
	=&\frac{C_0}{c_2}\sqrt{\frac{\ln(1+2n)}{n}}\sqrt{\frac{1}{s}+\frac{\ln(1+2n)}{\ln(1+2n)+\ln d}} \\
	<&\frac{C_0}{c_2}\sqrt{\frac{\ln(1+2n)}{n}}\sqrt{2}\\ \leq&1
\end{split}
\end{equation*}
where the last ineauality follows from the second inequality of condition (\ref{ns_con}). Combing this and the inequality (\ref{6-inequality}) enable us to get
$$\sqrt{2}C_0\sigma\sqrt{\frac{s(s+1)\ln(1+2n)+s\ln d}{n}}\sqrt{\frac{\ln(1+2n)}{n}}\leq\frac{1}{6}\lambda_n\varrho.$$
By simple calculation, we get 
\begin{equation*}
\begin{split}\frac{\sqrt{2}C_0\sigma\sqrt{\frac{s(s+1)\ln(1+2n)+s\ln d}{n}}\frac{\lambda_n\varrho}{\sqrt{s}}}{\sqrt{2}c_2\sigma s\sqrt{\frac{\ln(1+2n)+\ln d}{n}}}
	=&\frac{\lambda_n\varrho C_0}{c_2\sqrt{s}}\sqrt{\frac{1}{s}+\frac{\ln(1+2n)}{\ln(1+2n)+\ln d}} \\
	<&\frac{\sqrt{2}\lambda_n\varrho C_0}{c_2\sqrt{s}}\leq1
\end{split}
\end{equation*}
where the last ineauality follows from the second inequality of condition (\ref{lambdan_con}). Combing this and the inequality (\ref{6-inequality}) enable us to get
$$\sqrt{2}C_0\sigma\sqrt{\frac{s(s+1)\ln(1+2n)+s\ln d}{n}}\times\frac{\lambda_n\varrho}{\sqrt{s}}\leq\frac{1}{6}\lambda_n\varrho.$$
Considering
\begin{equation*}
\begin{split}\frac{\frac{\sqrt{s(\sigma^2+16\sigma^2\ln n)}}{n}}{\sqrt{2}c_2\sigma s\sqrt{\frac{\ln(1+2n)+\ln d}{n}}}
	=&\frac{1}{c_2\sqrt{ns}}\times\frac{\sqrt{\frac{1}{2}+8\ln n}}{\sqrt{\ln(1+2n)+\ln d}} \\
	<&\frac{3}{c_2\sqrt{ns}}\leq1
\end{split}
\end{equation*}
where first inequality derives from $n\geq 3$(resulting in $\ln n>1/2$) and $\ln n\leq\ln(1+2n)+\ln d$ and the last ineauality follows from the second inequality of condition (\ref{lambdan_con}). Combing this and the inequality (\ref{6-inequality}) enable us to get
$$\frac{\sqrt{s(\sigma^2+16\sigma^2\ln n)}}{n}\leq\frac{1}{6}\lambda_n\varrho.$$
By these inequalities, the definition of $r_n$ and inequality (\ref{grad-lamb}), we get (\ref{first-order-cond}) under the event $E^1\cap E^2 \cap E^3$.

Next, we prove that $\bm{\hat\beta}$ is a local minimizer if the event $E^1\cap E^2 \cap E^3$ occurs. Based on Lemma \ref{secondcon-opt}, it suffices to prove that $\bm{\hat\beta}$ satisfies condition (\ref{second-order-con}).
Note that the convexity of $p_{\lambda_n}(t)+\frac{\mu}{2}t^2$ implies $$\min \partial^2 p_\lambda(t)+\mu\geq0~~ \mbox{for any}~~t\geq0,$$ and hence
\begin{equation*}
\sum_{j\in \Gamma^*}(\min \partial^2p_{\lambda_n}(|\hat{\beta}_j|)+\mu)g_j^2\geq 0.
\end{equation*} 
On the other hand, a simple calculation yields that 
\begin{equation*}
\begin{split}
	& \bm{g}^\top \nabla^2L_n(\bm{\hat{\beta}})\bm g+\sum_{j\in \Gamma^\star }\min \partial^2 p_{\lambda_n}(|\bm{\hat{\beta}}_j|)g_j^2
	\\=&\bm g^T\nabla^2L_n(\bm{\hat{\beta}})\bm g+\sum_{j\in \Gamma^\star}(\min \partial^2 p_{\lambda_n}(|\bm{\hat{\beta}}_j|)+\mu)\bm g_j^2-\sum_{j\in \Gamma^\star}\mu g_j^2.
\end{split}
\end{equation*}
Therefore, it remains to prove that for any nonzero vector \(\bm{g} \in \mathbb{R}^d\) with \(g_j = 0\) for all \(j \notin \Gamma^{*}\),
\begin{equation}\label{second-order-con2}
\bm{g}^\top \nabla^2 L_n(\hat{\bm{\beta}}) \bm{g} -\sum_{j\in \Gamma^\star}\mu g_j^2 > 0.
\end{equation}	
We now proceed to prove this inequality.	
Since $$|\frac{1}{n}\sum_{i=1}^{n}\bm g_1^\top \bm Z_i^{11}\bm g_1\varepsilon_i|\leq \|\bm g_1\|^2\sup_{\|\bm{u}\|=1,\|\bm{v}\|=1}|\frac{1}{n}\sum_{i=1}^{n}\bm{u}^\top \bm Z_i^{11}\bm{v}\varepsilon_i|,$$
we have  
$$	|\frac{1}{n}\sum_{i=1}^{n}\bm g_1^\top \bm Z_i^{11}\bm g_1\varepsilon_i|\leq 5\sigma\sqrt{c_4}\sqrt{\frac{s\ln (1+2n)}{n}}\|\bm g_1\|^2$$
if the event $E^1$ occurs.
Notice that
$$\begin{aligned}&\frac{1}{n}\sum_{i=1}^{n}(\bm{\hat\beta} _1^o+\bm\beta_1^\star)^\top \bm Z_i^{11}(\bm{\hat\beta}_1^o-\bm \beta_1^\star)g_1^\top \bm Z_i^{11}\bm g_1\\
\leq&
\frac{1}{n}\sqrt{\sum_{i=1}^{n}\big((\bm{\hat\beta} _1^o+\bm\beta_1^\star)^\top \bm Z_i^{11}(\bm{\hat\beta}_1^o-\bm \beta_1^\star)\big)^2}
\sqrt{\sum_{i=1}^{n}\big(g_1^\top \bm Z_i^{11}\bm g_1\big)^2}.
\end{aligned}$$
Combining these two inequalities and Assumption 3, under the event $E^1$, we have for any non-zero vector \(\bm{g} \in \mathbb{R}^d\) with \(g_j = 0\) for all \(j \notin \Gamma^{*}\),
\begin{equation*}
\begin{split}
	& \bm{g}^\top \nabla^2L_n(\bm{\hat{\beta}})\bm g-\sum_{j\in \Gamma^\star}\mu g_j^2
	\\=&\frac{2}{n}\sum_{i=1}^{n}\bm g^\top \bm Z_i\bm{\hat{\beta}}\bm{\hat{\beta}}^\top \bm Z_i\bm g+\frac{1}{n}\sum_{i=1}^{n}(\bm{\hat{\beta}}^\top \bm Z_i\bm{\hat{\beta}}-y_i)\bm g^\top \bm Z_i\bm g-\sum_{j\in \Gamma^\star}\mu g_j^2\\
	=&{\frac{2}{n}\sum_{i=1}^{n}(\bm{\hat \beta}_1^{o\top }\bm Z_i^{11}\bm g_1)^2+\frac{1}{n}\sum_{i=1}^{n}(\bm{\hat\beta} _1^o+\bm\beta_1^\star)^\top \bm Z_i^{11}(\bm{\hat\beta}_1^o-\bm \beta_1^\star)g_1^\top \bm Z_i^{11}\bm g_1}
	\\&\qquad-\frac{1}{n}\sum_{i=1}^{n}\bm g_1^\top \bm Z_i^{11}\bm g_1\varepsilon_i-\sum_{j\in \Gamma^\star}\mu g_j^2
	\\
	\geq&2c_3\|\bm{\hat\beta}_1^o\|^2\|\bm g_1\|^2-c_4\|\bm{\hat\beta}_1^o+\bm \beta_1^\star\|\|\bm{\hat\beta}_1^o-\bm \beta_1^\star\|\|\bm g_1\|^2
	\\
	&\qquad\qquad- 5\sigma\sqrt{c_4}\sqrt{\frac{s\ln (1+2n)}{n}}\|\bm g_1\|^2-\mu \|\bm g_1\|^2
	\\
	\geq&2c_3\|\bm{\hat\beta}_1^o\|^2\|\bm g_1\|^2-c_4\|\bm{\hat\beta}_1^o+\bm \beta_1^\star\|\|\bm{\hat\beta}_1^o-\bm \beta_1^\star\|\|\bm g_1\|^2-c_3\|\bm \beta_1^\star \|^2\|\bm g_1\|^2-\mu \|\bm g_1\|^2
	\\ \geq& \Big(2c_3(||\bm\beta_1^\star\|^2+\|\bm{\hat\beta}_1^o-\bm \beta_1^\star\|^2-2||\bm \beta_1^\star\|\|\bm{\hat\beta}_1^o-\bm \beta_1^\star\|)\Big)\|g_1\|^2
	\\&	-\Big(c_4(2\|\bm \beta_1^\star\|+\|{\hat\beta}_1^o-\bm\beta_1^\star\|)\|\ \bm{\hat\beta}_1^o-\bm \beta_1^\star\|+ 5\sigma\sqrt{c_4}\sqrt{\frac{s\ln (1+2n)}{n}}+\mu\Big)\|\bm g_1\|^2
	\\ \geq & \Big(2c_3\|\bm \beta_1^\star\|^2-4c_3\|\bm \beta_1 ^\star \|r_n-c_4r_n^2-2c_4\|\bm \beta_1^\star\| r_n-5\sigma\sqrt{c_4}\sqrt{\frac{s\ln (1+2n)}{n}}-\mu\Big)\|\bm g_1\|^2
	\\ \geq & \Big(2c_3c_1^2s-c_4r_n^2-(4c_3+2c_4)c_2\sqrt{s}r_n-5\sigma\sqrt{c_4}\sqrt{\frac{s\ln (1+2n)}{n}}-\mu\Big)\|\bm g_1\|^2.
\end{split}
\end{equation*}
\begin{equation*}
\begin{split} \geq & \Big(2c_3c_1^2s-(4c_3+3c_4)c_2\sqrt{s}r_n-5\sigma\sqrt{c_4}\sqrt{\frac{s\ln (1+2n)}{n}}-\mu\Big)\|\bm g_1\|^2
	\\ \geq & \frac{1}{2}\big(2c_3c_1^2s-(4c_3+3c_4)c_2\sqrt{s}r_n-5\sigma\sqrt{c_4}\sqrt{\frac{s\ln (1+2n)}{n}}-\mu\big)\|\bm g_1\|^2.
\end{split}
\end{equation*}
where the fifth inequality follows from $r_n\leq c_1\sqrt{s}$ and $c_1\leq c_2$.
Combing the  above inequality and the condition $\mu \leq c_1^2c_3s$ and  (\ref{lambdan_con}), 	
we  get (\ref{second-order-con2})	under the event $E^1$.


Overall, we prove that $\bm{\hat\beta}$ is a local minimizer under the event $E^1\cap E^2 \cap E^3$.
By Lemmas \ref {lemma 1}, \ref{prob-E2} and \ref{prob-E2}, we complete the proof.\qed

\section{Proofs of Propositions 1 and 2}
\setcounter{equation}{0}

\noindent{\bf Proof of Proposition 1} 
Based on the similar proof method to that of Theorem 5 in \citet{Fan2018}, it is easy to prove equation (3.6) and thus it is  omitted. We mainly prove equation (3.7).

For any $\tau >0$, define the following auxiliary function
\begin{equation}\label{S2.1}
G_\tau (\bm\beta|\bm\gamma):=L(\bm\gamma)+\langle  \nabla L(\bm\gamma),\bm\beta-\bm\gamma\rangle +\frac{1}{2\tau}\|\bm\beta-\bm\gamma\|^2+P_{\lambda}(\bm\gamma)+\sum_{j=1}^d w_{\lambda,j}(|\beta_j|-|\gamma_j|)
\end{equation}
where 
\begin{equation*}
w_{\lambda,j}=\begin{cases}
\max(p_{\lambda}^{'}(|\gamma_j|),\epsilon_1), & \text{if } \gamma_j\neq 0,\\
\lambda \varrho, & \text{if } \gamma_j=0.
\end{cases}
\end{equation*}

It is clear that minimizing \eqref{S2.1} with respect to $\bm{\beta}$ is equivalent to the following minimization problem
\begin{equation}\label{S2.2}
\min_{\bm\beta \in \mathbb{R}^d} \frac{1}{2} \|\bm\beta-(\bm\gamma+\tau \nabla L(\bm\gamma))\|^2+\tau \sum_{j=1}^d w_{\lambda,j}|\beta_j|.
\end{equation}

For any $r >0$, let $B_r=\{\bm\beta \in \mathbb{R}^d : \|\bm\beta\|\leq r\}$ and $\hat L=\sup_{\bm\beta \in B_r} \|\nabla^2 L(\bm\beta)\|.$ Then for any $\tau \in (0,{\hat L}^{-1}]$ and $\bm\beta, \bm\gamma \in B_r$, we have 
\begin{equation*}
\begin{split}
F(\bm\beta)&=L(\bm\gamma)+\langle  \nabla L(\bm\gamma),\bm\beta-\bm\gamma\rangle+\frac{1}{2}(\bm\beta-\bm\gamma)^\top \nabla^2 L(\bm\xi)(\bm\beta-\bm\gamma) +P_{\lambda}(\bm\beta)
\\& \leq L(\bm\gamma)+\langle  \nabla L(\bm\gamma),\bm\beta-\bm\gamma\rangle+\frac{1}{2}(\bm\beta-\bm\gamma)^\top \nabla^2 L(\bm\xi)(\bm\beta-\bm\gamma)
\\&+P_{\lambda}(\bm\gamma)+\sum_{j=1}^{n} v_j(|\beta_j|-|\gamma_j|)
\\& \leq L(\bm\gamma)+\langle  \nabla L(\bm\gamma),\bm\beta-\bm\gamma\rangle+\frac{1}{2}(\bm\beta-\bm\gamma)^\top \nabla^2 L(\bm\xi)(\bm\beta-\bm\gamma)
\\&+P_{\lambda}(\bm\gamma)+\sum_{\gamma_j\neq0}\max(p_{\lambda}^{'}(|\gamma_j|),\epsilon_1)(|\beta_j|-|\gamma_j|)+\sum_{\gamma_j=0}\lambda \varrho(|\beta_j|-|\gamma_j|)
\end{split}
\end{equation*}
\begin{equation*}
\begin{split}
& \leq G_\tau(\bm\beta|\bm\gamma)+\frac{1}{2}(\bm\beta-\bm\gamma)^\top \nabla^2 L(\bm\xi)(\bm\beta-\bm\gamma)-\frac{1}{2\tau}\|\bm\beta-\bm\gamma\|^2
\\& \leq G_\tau(\bm\beta|\bm\gamma)+\frac{\hat L}{2}\|\bm\beta-\bm\gamma\|^2-\frac{1}{2\tau}\|\bm\beta-\bm\gamma\|^2
\\& \leq G_\tau(\bm\beta|\bm\gamma).
\end{split}
\end{equation*}
where $v_j\in \partial p_{\lambda}(|\gamma_j|)$ and $\bm\xi=\bm\gamma+\alpha(\bm\beta-\bm\gamma)$ for some $\alpha \in (0,1)$. The first  inequality follows from the concavity of $p_{\lambda}$, the second inequality follows from the upper bound of $\partial p_{\lambda}$ and the third inequality follows from $\|\bm\xi\|\leq r$. 

Assuming $\bm{\hat\beta}\in \arg\min\limits_{\bm\beta\in \mathbb{R}^d}F(\bm\beta)$ and $\bm{\tilde\beta}\in \arg\min\limits_{\bm\beta\in \mathbb{R}^d}G(\bm\beta|\bm{\hat\beta})$, then we get
\begin{equation*}
G_\tau(\bm{\tilde\beta}|\bm{\hat\beta})\leq G_\tau (\bm{\hat\beta}|\bm{\hat\beta})=F(\bm{\hat\beta})\leq F(\bm{\tilde\beta}) \leq G_\tau(\bm{\tilde\beta}|\bm{\hat\beta}),
\end{equation*}
hence $\bm{\hat\beta}\in \arg\min\limits_{\bm\beta\in \mathbb{R}^d}G_\tau(\bm\beta|\bm{\hat\beta})$. It means that $\bm{\hat\beta}$ is also a minimizer of the problem \eqref{S2.2} with $\bm\gamma=\bm{\hat\beta}$.

It is clear that problem \eqref{S2.2} can be solved using the soft-thresholding operator $\mathcal{S}$, therefore we can get (\ref{fixed2}).
\qed 

To prove Proposition \ref{alg-convergence}, we also need the following lemma.
\begin{lemma}\label{armijo}
Let $g_k=\|\nabla{}\ell(\beta^k)\|_2$, $G_k=\sup_{\beta\in{}B_k}\|\nabla^2\ell(\beta)\|_2$ where $B_k=\{\beta\in\mathbb{R}^p:\|\beta\|_2\leq\|\beta^k\|_2+g_k\}.$ For any $\delta>0, \alpha_0,\gamma_1\in(0,1)$, define
$$j_k=\left\{
\begin{array}{lll}
0, & \mbox{if}~\gamma_1(G_k+\delta)\leq1;\\
-\mbox{[}\log_{\gamma_0}\gamma_1(G_k+\delta)\mbox{]}+1, & \mbox{otherwise}.
\end{array}
\right.$$
Then (\ref{armijo1}) holds, and there exists a nonnegative integer $\bar{j}$ such that $\tau_k\in[\gamma_1\gamma_0^{\bar{j}},\gamma_1].$
\end{lemma}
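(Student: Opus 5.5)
The plan is to show two things: that the step $\tau_k=\gamma_1\gamma_0^{j_k}$ satisfies $\tau_k\le 1/(G_k+\delta)$, and that any such step forces the sufficient decrease (\ref{armijo1}). The lower bound on $\tau_k$ will then come from a uniform bound on $G_k$ along the iterates.

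\emph{Step size.} By the definition of $j_k$: if $\gamma_1(G_k+\delta)\le 1$, then $j_k=0$ and $\tau_k=\gamma_1\le (G_k+\delta)^{-1}$. Otherwise $j_k>\log_{\gamma_0}\!\big(1/(\gamma_1(G_k+\delta))\big)$, and since $\gamma_0<1$ this gives $\gamma_0^{j_k}\le 1/(\gamma_1(G_k+\delta))$. In both cases $\tau_k(G_k+\delta)\le 1$.

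\emph{Localization.} Write $\bm v^k=\bm\beta^k-\tau_k\nabla L(\bm\beta^k)$. In both algorithms $\bm\beta^{k+1}$ is a separable shrinkage of $\bm v^k$.
In Algorithm 1 it is $\mathrm{prox}_{\tau_kP_\lambda}$. Since $p_\lambda$ is nondecreasing in $|t|$ (Assumption 1(iii)), each coordinate satisfies $|\beta^{k+1}_j|\le|v^k_j|$.
In Algorithm 2 it is the weighted soft-thresholding $\mathcal S$, which shrinks each coordinate directly.
Hence
\[
\|\bm\beta^{k+1}\|\le\|\bm v^k\|\le\|\bm\beta^k\|+\tau_k g_k\le\|\bm\beta^k\|+g_k ,
\]
using $\tau_k\le\gamma_1<1$. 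So both endpoints lie in the convex ball $B_k$, and therefore the whole segment between them does. The mean-value (descent) inequality then gives, with $\bm d=\bm\beta^{k+1}-\bm\beta^k$,
\[
L(\bm\beta^{k+1})\le L(\bm\beta^k)+\langle\nabla L(\bm\beta^k),\bm d\rangle+\tfrac{G_k}{2}\|\bm d\|^2 .
\]

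\emph{Decrease.} For Algorithm 1, the prox subproblem $\frac{1}{2\tau_k}\|\bm u-\bm v^k\|^2+P_\lambda(\bm u)$ is $(\tau_k^{-1}-\mu)$-strongly convex by weak convexity. Comparing its minimizer $\bm\beta^{k+1}$ with $\bm u=\bm\beta^k$ gives
\[
\langle\nabla L(\bm\beta^k),\bm d\rangle+P_\lambda(\bm\beta^{k+1})-P_\lambda(\bm\beta^k)\le-\big(\tau_k^{-1}-\tfrac{\mu}{2}\big)\|\bm d\|^2 .
\]
Adding this to the descent inequality yields
\[
F(\bm\beta^k)-F(\bm\beta^{k+1})\ge\tfrac12\big(2\tau_k^{-1}-\mu-G_k\big)\|\bm d\|^2 .
\]
With $\tau_k^{-1}\ge G_k+\delta$ the coefficient is at least $\tfrac12(G_k+2\delta-\mu)$. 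This is $\ge\delta$ once $G_k\ge\mu$; otherwise I will absorb $\mu$ by enlarging $G_k$ to $\max(G_k,\mu)$ in the definition.

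For Algorithm 2, the weighted-$\ell_1$ subproblem is $\tau_k^{-1}$-strongly convex, which gives the same inequality with $P$ replaced by the linearized term $\sum_j w^k_j(|\beta^{k+1}_j|-|\beta^k_j|)$. I then need the concavity majorization
\[
P_\lambda(\bm\beta^{k+1})\le P_\lambda(\bm\beta^k)+\sum_j w^k_j\big(|\beta^{k+1}_j|-|\beta^k_j|\big),
\]
as in the proof of Proposition~\ref{proposition 1}.

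\emph{Main obstacle.} The hard step is this majorization when the truncated weight $w_j^k=\epsilon_1>p'_\lambda(|\beta^k_j|)$ and $|\beta^{k+1}_j|<|\beta^k_j|$. There the inequality can fail, so the decrease is only guaranteed for the surrogate $F^k$ rather than for $F$. This is presumably why Proposition~\ref{alg-convergence}(i) lists $F^k$. I would state (\ref{armijo1}) for Algorithm 2 in terms of $F^k$, or choose $\epsilon_1$ small enough that the error term is dominated by $\delta\|\bm d\|^2$.

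\emph{Existence of $\bar j$.} To get $\tau_k\ge\gamma_1\gamma_0^{\bar j}$ uniformly in $k$, I need $\sup_k G_k<\infty$. Monotone decrease keeps $\{\bm\beta^k\}$ in the level set $\{F\le F(\bm\beta^0)\}$. If that set is bounded (via coercivity of the quartic loss, which I expect to need as a mild extra assumption), then $\|\bm\beta^k\|$ and $g_k$ are bounded. Hence the radii of the balls $B_k$ are bounded, $G_k\le\bar G$, and $\bar j=\lceil\log_{\gamma_0}(1/(\gamma_1(\bar G+\delta)))\rceil+1$ works.
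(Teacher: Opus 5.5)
\paragraph{Comparison with the paper, and Algorithm 1.}
Your argument is much more explicit than the paper's proof. The paper only sketches the lemma, deferring to Lemmas 8 and 9 of \cite{Fan2018} and invoking ``coercivity of the regularizer and nonexpansiveness of the proximal operator.''

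Your coordinatewise shrinkage $|\operatorname{prox}_{\tau P_\lambda}(\bm v)_j|\le|v_j|$ is exactly what the localization to $B_k$ needs. It is also the right substitute for nonexpansiveness, which fails for a nonconvex $P_\lambda$: the MCP prox is firm thresholding, with slope $(1-\tau/b)^{-1}>1$.

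Your step-size bound, the descent inequality and the strong-convexity estimate $F(\bm\beta^k)-F(\bm\beta^{k+1})\ge\frac12(2\tau_k^{-1}-\mu-G_k)\|\bm d\|^2$ are correct. Your enlargement $G_k\mapsto\max(G_k,\mu)$ is genuinely necessary, not a convenience, because the lemma as printed is false for Algorithm 1. Take $d=n=1$, $Z_1=1$, $y_1=0$ (so $L(\beta)=\beta^4/4$), MCP with $\lambda=0.18$, $b=0.6$, and $\beta^0=0.1$, $\gamma_1=0.5$, $\delta=1.9$. Then:
\begin{itemize}
\item $G_0=3(0.101)^2\approx0.03$, so $\gamma_1(G_0+\delta)<1$, $j_0=0$ and $\tau_0=0.5$;
\item $\beta^1=\operatorname{prox}_{0.5P_\lambda}(0.0995)=0.057$;
\item $F(\beta^0)-F(\beta^1)\approx2.14\cdot10^{-3}<3.51\cdot10^{-3}\approx\delta|\beta^1-\beta^0|^2$.
\end{itemize}
If you want to keep the printed $j_k$ instead, compare the prox objective at $\bm\beta^{k+1}$ and at $\bm\beta^k$ using bare minimality only. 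This gives the decrease with $\delta/2$ in place of $\delta$, for any $\mu$.

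\paragraph{Gap 1: Algorithm 2.}
Your diagnosis of the truncated weights is right, and the majorization you borrow from the proof of Proposition 1 has exactly this defect. However, your fallback of taking $\epsilon_1$ small cannot work. The majorization defect can equal $\epsilon_1(|\beta_j^k|-|\beta_j^{k+1}|)$, for instance in the flat part of SCAD or MCP. That is first order in $\|\bm d\|$, so no fixed $\epsilon_1>0$ makes it dominated by $\delta\|\bm d\|^2$ for short steps.

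In fact \eqref{armijo1} for $F$ can fail for every $j$. Take $L(\beta)=(\beta^2-1)^2/4$, MCP with $\lambda=0.1$, $b=3$, $\epsilon_1<0.7$, and $\beta^k=1$.
\begin{itemize}
\item Here $\nabla L(\beta^k)=0$ and $w^k=\epsilon_1$.
\item The update moves $\beta$ toward $0$ by a multiple of $\tau_k\epsilon_1$ and stays in the flat part of MCP, so $F(\beta^{k+1})-F(\beta^k)=L(\beta^{k+1})>0$.
\item This $\beta^k$ is a global minimizer of $F$, yet it violates \eqref{fixed2}.
\end{itemize}

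So only your surrogate option survives. But then your proof of $\bar j$ collapses: it confines the iterates to $\{F\le F(\bm\beta^0)\}$ through monotonicity of $F$, which you no longer have. The surrogates $F^k$ change with $k$, so their decrease does not telescope.

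A clean repair, assuming $\epsilon_1\le\lambda\varrho$, is to set $\tilde p(t)=\int_0^t\max\{p'_\lambda(s),\epsilon_1\}\,ds$ and $\tilde F=L+\sum_j\tilde p(|\beta_j|)$.
\begin{itemize}
\item $\tilde p$ is concave, and $w_j^k$ is exactly its derivative at $|\beta_j^k|$ (right derivative $\lambda\varrho$ at $0$), so the tangent majorization holds exactly for $\tilde F$.
\item Your computation then gives $\tilde F(\bm\beta^k)-\tilde F(\bm\beta^{k+1})\ge(\tau_k^{-1}-G_k/2)\|\bm d\|^2\ge\delta\|\bm d\|^2$ with the printed $j_k$.
\item Since $\tilde F\ge\epsilon_1\|\bm\beta\|_1$ is coercive, its level sets are bounded, which yields $\sup_kG_k<\infty$ and $\bar j$.
\end{itemize}

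\paragraph{Gap 2: existence of $\bar j$ for Algorithm 1.}
Here the $\bar j$ part needs an extra hypothesis, and coercivity of the quartic loss is not a mild one when $d>n$. For phase retrieval ($\bm Z_i=\bm a_i\bm a_i^\top$), $L$ is constant along $\mathrm{span}\{\bm a_i\}^\perp$. SCAD, MCP, TL1 and EXP are bounded, so $F$ need not have bounded level sets. The paper's appeal to coercivity of the regularizer has the same hole: it covers LOG but none of these four penalties. You should therefore state bounded iterates, or a coercive $p_\lambda$, as an explicit assumption.
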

\noindent{\bf Proof sketch} 
The proof follows the same structure as that of Lemmas 8 and 9 in \cite{Fan2018}. The core function \( L \) is identical. Although the regularizer considered here (a weakly convex concave regularizer or a weighted \( \ell_1 \) penalty) differs in form from the \( \ell_q(0<q<1) \) regularizer in \cite{Fan2018}, the argument relies only on the coercivity of the regularizer and the nonexpansiveness of the proximal operator, both of which hold in our setting. Consequently, the entire line of reasoning applies directly, and detailed calculations are omitted. \qed

\noindent{\bf Proof of Proposition \ref{alg-convergence}}  Regarding Algorithm 1, based on the similar proof method to that of Theorem S3.1 in \cite{Fan2018}, it is easy to prove it and so is  omitted. The conclusion for Algorithm 2 is proved below, where the proof of (i) follows the approach of Proposition 3.2 in \cite{bai2024avoiding}.
For any $\tau >0$, define the following auxiliary function
\begin{equation}\label{S2.1}
G^k_\tau (\bm\beta,\bm{\beta}^k):=L(\bm{\beta}^k)+\langle  \nabla L(\bm{\beta}^k),\bm\beta-\bm{\beta}^k\rangle +\frac{1}{2\tau}\|\bm\beta-\bm{\beta}^k\|^2+P_{\lambda}(\bm\gamma)
+\sum_{j=1}^d w_{\lambda,j}^k(|\beta_j|-|\beta_j^k|).
\end{equation}
where 
\begin{equation*}
w_{\lambda,j}^k=\begin{cases}
\max(p_{\lambda}^{'}(|\beta_j^k|),\epsilon_1), & \text{if } \beta_j^k\neq 0,\\
\lambda \varrho, & \text{if } \beta_j^k=0.
\end{cases}
\end{equation*}
Notice that the iteration $\bm\beta^{k+1}=\mathcal{S}(\bm\beta^k-\tau_k\nabla  L(\bm\beta^k),{\lambda}\tau_k \bm{w}^k)$ is equivalent to
$$\bm\beta^{k+1}=\arg\min\frac{1}{2}\|\bm\beta-\big(\bm\beta^k-\tau_k\nabla  L(\bm\beta^k)\big)\|^2+\lambda\tau_k\|\bm{w}^k\circ\bm{\beta}\|_1$$
Hence the sequence $\{\bm\beta^{k}\}$ is bounded because every entry of $\bm{w}^k$ is positive, $\lambda>0$ and $\tau_k\in[\gamma_1\gamma_0^{\bar{j}},\gamma_1]$ from Lemma \ref{armijo}. 
Then there exists a positive constant $G$ such that $G=\sup\|\nabla^2\ell(\bm\beta^k)\|_2$.

(i)~From the convavity of $p_{\lambda}^{'}(t)>0$ for any $t>0$, we conclude that 
\begin{equation}\label{prop-concave-pena}
\begin{split}
&\sum_{j=1}^{d}p_{\lambda}(|\beta_j^{k+1}|)-\sum_{j=1}^{d}p_{\lambda}(|\beta_j^{k}|)\\
\leq& 
\sum_{\beta_j^{k}\neq0}p_{\lambda}^{'}(|\beta_j^{k}|)(|\beta_j^k|-|\beta_j^{k+1}|)+\sum_{\beta_j^{k}=0}\xi_j^k(|\beta_j^k|-|\beta_j^{k+1}|)\nonumber
\end{split}
\end{equation}
\begin{equation}\label{prop-concave-pena}
\begin{split}
\leq& 
\sum_{\beta_j^{k}\neq0}p_{\lambda}^{'}(|\beta_j^{k}|)(|\beta_j^k|-|\beta_j^{k+1}|)+\sum_{\beta_j^{k}=0}\lambda \varrho(|\beta_j^k|-|\beta_j^{k+1}|)\\
=& \sum_{j=1}^{d}w_{\lambda,j}^k(|\beta_j^{k+1}|-|\beta_j^k|)
\end{split}
\end{equation}
where the second inequality follows from $\lim\limits_{t\rightarrow0^+}p_{\lambda}^{'}(t)=\lambda \varrho$ and $\varrho>0$.
Additionally, from the Taylor expansion, it follows that
\begin{equation}\label{S2.4}
L(\bm\beta^k)-L(\bm\beta^{k+1})\geq \langle \nabla L(\bm\beta^k),\bm\beta^k-\bm\beta^{k+1} \rangle -\frac{G}{2}\|\bm\beta^k-\bm\beta^{k+1}\|^2.
\end{equation}

Combining  \eqref{prop-concave-pena} and \eqref{S2.4}, we can get
\begin{equation}\label{S2.5}
\begin{split}
&F(\bm\beta ^k)-F(\bm\beta^{k+1})\\
=&	L(\bm\beta^k)+\sum_{j=1}^{d}p_{\lambda}(|\beta_j^{k}|)-L(\bm\beta^{k+1})-\sum_{j=1}^{d}p_{{\lambda}}(|\beta_j^{k+1}|)
\\ \geq& \langle \nabla L(\bm\beta^k),\bm\beta^k-\bm\beta^{k+1} \rangle -\frac{\hat L}{2}\|\bm\beta^k-\bm\beta^{k+1}\|^2 
+\sum_{j=1}^{d}w_{\lambda,j}^k(|\beta_j^{k}|-|\beta_j^{k+1}|)
\end{split}
\end{equation}
For each subproblem, $\bm\beta^{k+1}$ staisfies the optimal condition
\begin{equation*}
\bm{0}\in\nabla L(\bm\beta^k)+\frac{1}{\tau_k}(\bm\beta ^{k+1}-\bm\beta^k)+\bm{w}_\lambda^k \circ\partial\|\bm{\beta}^{k+1}\|_1,
\end{equation*}
which implies that there exists an vector $\bm\zeta^{k+1}=(\zeta_1^{k+1},\cdots,\zeta_d^{k+1})^\top \in \partial \|\bm\beta^{k+1}\|_1$ such that 
\begin{equation}\label{Gtau-opt-cond}
\nabla L(\bm\beta^k)+\frac{1}{\tau_k}(\bm\beta ^{k+1}-\bm\beta^k)+\bm{w}_\lambda^k \circ\bm\zeta^{k+1}=	\bm{0},
\end{equation} 
where $\bm{w}_\lambda^k=(w_{\lambda,1}^k,\cdots,w_{\lambda,d}^k)^\top$. Therefore, we obtain 
\begin{eqnarray}
\begin{aligned}
&G^k_{\tau_k} (\bm\beta^{k},\bm\beta^k)-G^k_{\tau_k} (\bm\beta^{k+1},\bm\beta^k)\\
=& \langle \nabla L(\bm\beta^k),\bm\beta^k-\bm\beta^{k+1} \rangle -\frac{1}{2\tau_k}\|\bm\beta^k-\bm\beta^{k+1}\|^2 +\sum_{j=1}^{d}w_j^k(|\beta_j^{k}|-|\beta_j^{k+1}|)
\\ \geq & \langle \nabla L(\bm\beta^k),\bm\beta^k-\bm\beta^{k+1} \rangle  +\sum_{j=1}^{d}\zeta_j^{k+1}w_j^k(\beta_j^{k}-\beta_j^{k+1})-\frac{1}{2\tau_k}\|\bm\beta^k-\bm\beta^{k+1}\|^2
\\ = & [\nabla L(\bm\beta^k)+\frac{1}{\tau}(\bm\beta^{k+1}-\bm\beta^k)+\sum_{j=1}^{d} w_j^k\zeta_j^{k+1}]^\top(\bm\beta^k-\bm\beta^{k+1})
+\frac{1}{2\tau_k}\|\bm\beta^k-\bm\beta^{k+1}\|^2\\
=&\frac{1}{2\tau_k}\|\bm\beta^k-\bm\beta^{k+1}\|^2,
\end{aligned}\nonumber
\end{eqnarray}
where the inequality follows from the convexity of $|\cdot|$ and $w_{\lambda,j}^k\geq0$, and the last equality derives from (\ref{Gtau-opt-cond}).
Combining this and \eqref{S2.5}, we get 
\begin{eqnarray}\label{S2.7}
\begin{aligned}
&F(\bm\beta ^k)-F(\bm\beta^{k+1})\\ \geq & G_{\tau_k}^k(\bm\beta^{k},\bm\beta^k)-G_{\tau_k}^k(\bm\beta^{k+1},\bm\beta^k)-\frac{G}{2}\|\bm\beta^k-\bm\beta^{k+1}\|^2+\frac{1}{2\tau_k}\|\bm\beta^k-\bm\beta^{k+1}\|^2 \\\geq & 
\left(\frac{1}{\tau_k}-\frac{G}{2}\right)\|\bm\beta^k-\bm\beta^{k+1}\|^2
\\ \geq& 0.
\end{aligned}
\end{eqnarray}

(ii)~From the definition of $\bm\beta^{k+1}$ and \eqref{S2.7}, we have 
\begin{equation*}
\sum_{k=0}^{K}[F(\bm\beta^k)-F(\bm\beta^{k+1})]=F(\bm\beta^0)-F(\bm\beta^{K+1})\geq \left(\frac{1}{\tau_k}-\frac{G}{2}\right)\sum_{k=0}^{K}\|\bm\beta^k-\bm\beta^{k+1}\|^2.
\end{equation*}
Hence, $\sum_{k=0}^{\infty} \|\bm\beta^k - \bm\beta^{k+1}\|^{2} < \infty$, which implies $\lim\limits_{k\rightarrow \infty}\|\bm\beta^k - \bm\beta^{k+1}\|^{2}=0$.

(iii)~Because $\{\bm\beta^k\}$ and $\{\tau_k\}$ have convergent subsequences, without loss of generality, assume that
\begin{equation}\label{S2.8}
\bm\beta^k \to \bm\beta^\diamond \quad \text{and} \quad \tau_k \to \tau^\diamond\quad\text{as } k \to \infty,
\end{equation}
and denote
$$G_{\tau}^\diamond(\bm\beta, \bm\beta^\diamond)=L(\bm\beta^\diamond)+\langle  \nabla L(\bm\beta^\diamond),\bm\beta-\bm\beta^\diamond\rangle +\frac{1}{2\tau}\|\bm\beta-\bm\beta^\diamond\|^2+P_{\lambda}(\bm\gamma)+\sum_{j=1}^d w_{j}^\diamond(|\beta_j|-|\beta_j^\diamond|)$$
with
\begin{equation*}
w_j^\diamond=\begin{cases}
\max(p_{\lambda}^{'}(|\beta_j^\diamond|),\epsilon_1), & \text{if } \beta_j^\diamond\neq 0,\\
\lambda \varrho, & \text{if } \beta_j^\diamond=0.
\end{cases}
\end{equation*}
It is known that the iteration $\bm\beta^{k+1}=\mathcal{S}(\bm\beta^k-\tau_k\nabla  L(\bm\beta^k),{\lambda}\tau_k \bm{w}^k).$ Then for any $\bm\beta \in \mathbb{R}^d$,
\begin{equation*}
G^k_{\tau_k} (\bm\beta^{k+1},\bm\beta^k)\leq G^k_{\tau_k} (\bm\beta,\bm\beta^k).
\end{equation*}
Combing the continuousness of the function $	G^k_\tau(\cdot,\bm\beta^k)$ and the limit \eqref{S2.8}, we can obtain 
\begin{equation*}
G_{\tau^\diamond}^\diamond(\bm\beta^\diamond, \bm\beta^\diamond)
= \lim_{k \to \infty} G_{\tau_k}^k(\bm\beta^{k+1}, \bm\beta^k)\leq \lim_{k \to \infty} G_{\tau_k}^k(\bm\beta, \bm\beta^k)
= G_{\tau^\diamond}^\diamond(\bm\beta, \bm\beta^\diamond).
\end{equation*}
Based on the above inequality, we get
\begin{equation*}
\bm\beta^\diamond \in \arg\min_{\bm\beta \in \mathbb{R}^d} G_{\tau^\diamond}^\diamond(\bm\beta, \bm\beta^\diamond).
\end{equation*}
According to the Proposition 1, it means that $\bm\beta^\diamond $ is also satisfies 
\begin{equation}
\bm\beta=\mathcal{S}(\bm\beta-\tau\nabla L(\bm\beta),\tau \bm{w}_\lambda).\nonumber
\end{equation}
Then, we complete the proof.
\qed

\newpage

\section{Detailed numerical results}




\begin{table}[htbp]
	\centering
	\caption{\text{RelErr} for $p = 128$, $s = 10$, $\sigma = 10^{-2}$.}
	\label{tab:error_p128_s10_s1e-2}
	\scriptsize
	\renewcommand{\arraystretch}{1.2}
	\resizebox{\linewidth}{!}{%
%
	}
\end{table}

\begin{table}[htbp]
	\centering
	\caption{Computation time (seconds) for $p = 128$, $s = 10$, $\sigma = 10^{-2}$.}
	\label{tab:time_p128_s10_s1e-2}
	\scriptsize
	\renewcommand{\arraystretch}{1.2}
	\resizebox{\linewidth}{!}{%
		%
%
	}
\end{table}

\begin{table}[htbp]
	\centering
	\caption{Support recovery metrics for $p = 128$, $s = 10$, $\sigma = 10^{-2}$.}
	\label{tab:support_p128_s10_s1e-2}
	\scriptsize
	\renewcommand{\arraystretch}{1.2}
	\resizebox{\linewidth}{!}{%
		%
%
	}
\end{table}

\begin{table}[htbp]
	\centering
	\caption{Relative bias for large coefficients and estimated sparsity for $p = 128$, $s = 10$, $\sigma = 10^{-2}$ (true sparsity = 10). Sparsity values are shown as mean (SD) in decimal format.}
	\label{tab:bias_p128_s10_s1e-2}
	\scriptsize
	\renewcommand{\arraystretch}{1.2}
	\resizebox{\linewidth}{!}{%
		%
%
	}
\end{table}


\begin{table}[htbp]
	\centering
	\caption{\text{RelErr} for $p = 128$, $s = 10$, $\sigma = 5\times10^{-2}$.}
	\label{tab:error_p128_s10_s5e-2}
	\scriptsize
	\renewcommand{\arraystretch}{1.2}
	\resizebox{\linewidth}{!}{%
		%
%
	}
\end{table}

\begin{table}[htbp]
	\centering
	\caption{Computation time (seconds) for $p = 128$, $s = 10$, $\sigma = 5\times10^{-2}$.}
	\label{tab:time_p128_s10_s5e-2}
	\scriptsize
	\renewcommand{\arraystretch}{1.2}
	\resizebox{\linewidth}{!}{%
		%
%
	}
\end{table}

\begin{table}[htbp]
	\centering
	\caption{Support recovery metrics for $p = 128$, $s = 10$, $\sigma = 5\times10^{-2}$.}
	\label{tab:support_p128_s10_s5e-2}
	\scriptsize
	\renewcommand{\arraystretch}{1.2}
	\resizebox{\linewidth}{!}{%
		%
%
	}
\end{table}

\begin{table}[htbp]
	\centering
	\caption{Relative bias for large coefficients and estimated sparsity for $p = 128$, $s = 10$, $\sigma = 5\times10^{-2}$. Sparsity values are shown as mean (SD) in decimal format.}
	\label{tab:bias_p128_s10_s5e-2}
	\scriptsize
	\renewcommand{\arraystretch}{1.2}
	\resizebox{\linewidth}{!}{%
		%
%
	}
\end{table}


\begin{table}[htbp]
	\centering
	\caption{\text{RelErr} for $p = 256$, $s = 15$, $\sigma = 10^{-2}$.}
	\label{tab:error_p256_s15_s1e-2}
	\scriptsize
	\renewcommand{\arraystretch}{1.2}
	\resizebox{\linewidth}{!}{%
		%
%
	}
\end{table}

\begin{table}[htbp]
	\centering
	\caption{Computation time (seconds) for $p = 256$, $s = 15$, $\sigma = 10^{-2}$.}
	\label{tab:time_p256_s15_s1e-2}
	\scriptsize
	\renewcommand{\arraystretch}{1.2}
	\resizebox{\linewidth}{!}{%
		%
%
	}
\end{table}

\begin{table}[htbp]
	\centering
	\caption{Support recovery metrics for $p = 256$, $s = 15$, $\sigma = 10^{-2}$.}
	\label{tab:support_p256_s15_s1e-2}
	\scriptsize
	\renewcommand{\arraystretch}{1.2}
	\resizebox{\linewidth}{!}{%
		%
%
	}
\end{table}

\begin{table}[htbp]
	\centering
	\caption{Relative bias for large coefficients and estimated sparsity for $p = 256$, $s = 15$, $\sigma = 10^{-2}$. Sparsity values are shown as mean (SD) in decimal format.}
	\label{tab:bias_p256_s15_s1e-2}
	\scriptsize
	\renewcommand{\arraystretch}{1.2}
	\resizebox{\linewidth}{!}{%
		%
%
	}
\end{table}


\begin{table}[htbp]
	\centering
	\caption{\text{RelErr} for $p = 256$, $s = 15$, $\sigma = 5\times10^{-2}$.}
	\label{tab:error_p256_s15_s5e-2}
	\scriptsize
	\renewcommand{\arraystretch}{1.2}
	\resizebox{\linewidth}{!}{%
		%
%
	}
\end{table}

\begin{table}[htbp]
	\centering
	\caption{Computation time (seconds) for $p = 256$, $s = 15$, $\sigma = 5\times10^{-2}$.}
	\label{tab:time_p256_s15_s5e-2}
	\scriptsize
	\renewcommand{\arraystretch}{1.2}
	\resizebox{\linewidth}{!}{%
		%
%
	}
\end{table}

\begin{table}[htbp]
	\centering
	\caption{Support recovery metrics for $p = 256$, $s = 15$, $\sigma = 5\times10^{-2}$.}
	\label{tab:support_p256_s15_s5e-2}
	\scriptsize
	\renewcommand{\arraystretch}{1.2}
	\resizebox{\linewidth}{!}{%
		%
%
	}
\end{table}

\begin{table}[htbp]
	\centering
	\caption{Relative bias for large coefficients and estimated sparsity for $p = 256$, $s = 15$, $\sigma = 5\times10^{-2}$. Sparsity values are shown as mean (SD) in decimal format.}
	\label{tab:bias_p256_s15_s5e-2}
	\scriptsize
	\renewcommand{\arraystretch}{1.2}
	\resizebox{\linewidth}{!}{%
		%
%
	}
\end{table}

	
	
	\bibliography{mybibfile}
	
\end{document}